\providecommand{\U}[1]{\protect\rule{.1in}{.1in}}
\newtheorem{theorem}{Theorem}
\newtheorem{assumption}{Assumption}
\newtheorem{lemma}{Lemma}
\newtheorem{proposition}{Proposition}
\newtheorem{remark}{Remark}
\theoremstyle{remark}
\newcommand{\xmath}[1]{\ensuremath{#1}\xspace}
\renewcommand{\Pr}{\xmath{\mathbb{P}}}
\newcommand{\ve}{{\varepsilon}}
\newcommand{\E}{{\mathbb{E}}}
\begin{document}
\title[Large Delays in G/G/2 Queues]{Tail Asymptotics for Delay in a
  Half-loaded GI/GI/2 Queue with Heavy-tailed Job Sizes}
\author[Blanchet, J.]{Jose Blanchet}
\author[Murthy, K.]  {Karthyek Murthy}
\address{Columbia University, Department of Industrial
  Engineering \& Operations Research, 340 S. W. Mudd Building, 500
  West 120 Street, New York, NY 10027, United States.}
\email{$\{$jose.blanchet, kra2130$\}$@columbia.edu}

\maketitle

\begin{abstract} 
  We obtain asymptotic bounds for the tail distribution of steady-state waiting time in a two server queue where each server processes incoming jobs at a rate equal to the rate of their arrivals (that is, the half-loaded regime). The job sizes are taken to be regularly varying. When the incoming jobs have finite variance, there are basically two types of effects that dominate the tail asymptotics. While the quantitative distinction between these two manifests itself only in the slowly varying components, the two effects arise from qualitatively very different phenomena (arrival of one extremely big job (or) two big jobs). Then there is a phase transition that occurs when the incoming jobs have infinite variance. In that case, only one of these effects dominate the tail asymptotics, the one involving arrival of one extremely big job.  
\end{abstract}

\section{Introduction}
The tail behaviour of the distribution of steady-state delay in
multiserver queues processing jobs with heavy-tailed sizes has
attracted substantial attention in stochastic operations
research. Most of the literature has focused on the case in which the
traffic intensity, $\rho $, (that is, the ratio between the mean
service requirement and the mean interarrival time) is not an integer
and there are qualitative reasons, as we shall discuss, that make the
integer case significantly more delicate to analyze. Our contribution
in this work is to provide the first asymptotic upper and lower bounds
for the tail distribution, that match up to a constant factor, for the
integer case. In that process, we identify the occurrence of a few
surprising phenomena that are not common in the asymptotic analysis of
multiserver queues. We concentrate on the two server queue because it
provides a vehicle to study the qualitative phenomenon that is of
interest to us.

As mentioned earlier, most of the literature concentrates on the case
in which $\rho$ is not an integer. A series of conjectures relating
tail distribution of steady-state delay to the traffic intensity has
been made in \cite{Whitt2000}.  These conjectures turned out to be
basically correct for the case of regularly varying job sizes and were
verified for the case of a two-server queue in \cite{MR2201624}, where
more general asymptotic bounds for subexponential distributions are
provided. In \cite{Foss_MoOR}, the authors provide bounds (up to
constants) that verify the conjecture in \cite{Whitt2000} for general
multiserver queues with regularly varying job sizes and non-integer
traffic intensity. There is a related body of literature aimed at
studying stability properties, such as the existence of the mean
steady-state delay, in terms of the traffic intensity of the system
and tail properties of the incoming traffic. The relations found in
this literature, see \cite{Wolf_Sigman_97} and
\cite{Scheller-Wolf:2011:SST:1900687.1900690}, again are also derived
only for the case of non-integer traffic intensity and are consistent
with the relations found for the tail distributions mentioned earlier
(which can be used to derive the existence of moments).

In order to discuss our contributions in more detail, let us introduce
some notation. Let $V$ denote the amount of time required to service a
generic job arriving to the queue and let $\bar{B}(x) = \Pr\{ V > x
\}.$ We assume that $\bar{B}(\cdot)$ is regularly varying with index
$\alpha > 1,$ that is, 
\begin{equation*}
  \bar{B}(x)=x^{-\alpha }L\left( x\right) ,  
  \label{RV_Ialpha}
\end{equation*}
for some function $L\left( \cdot \right) $ satisfying $%
\lim_{x\rightarrow \infty }L\left( tx\right) /L\left( x\right) =1 $
for each $t>0$; such a function $L\left( \cdot \right) $ is said to be
slowly varying.  Jobs are assumed to arrive as a Poisson stream (or,
more generally, a renewal stream) with rate equal to $\E V$ and
service requirements that are identical copies of $V.$ Under this
setting, the traffic intensity $\rho$ equals 1. Let us write $W$ to
denote the steady-state waiting time of the two-server queue that
processes jobs according to FCFS (first-come-first-serve)
discipline. Our first result establishes that if $\alpha >2$, then
\begin{equation}
  \Pr \left\{W > b\right\}=\Theta \left(
    b^{2}\bar{B}\xspace (b^{2})+b^{2}\bar{B}\xspace^{2}(b)\right) , 
  \label{AS_1}
\end{equation}
as $b \rightarrow \infty.$ Here recall that
$f\left( b\right) =\Theta \left( g\left( b\right) \right)$ if and only
if $f\left( b\right) \leq c_{1}g\left( b\right) $ and
$g\left( b\right) \leq c_{2}f\left( b\right) $ for some positive
constants $c_{1} \text{ and } c_{2}$ that are independent of $b$. To
get a sense of how subtle the difference between the terms appearing
in (\ref{AS_1}) are, it is instructive to consider the example
$L\left( x\right) = \log \left( 1+x\right) $, where the second term
appearing in the right hand side of (\ref{AS_1}) dominates the
asymptotic behaviour. On the other hand, if
$L\left( x\right) = 1/\log \left( 1+x\right) $, the first term in the
right hand side of (\ref{AS_1}) dominates the asymptotic
behaviour. Finally, if $L\left( x\right) \sim c$ for some $c>0$ (the
asymptotically Pareto case) both terms contribute substantially.

Further, let us contrast the result in (\ref{AS_1}) with that derived
in \cite{MR2201624}. For the case $\rho <1,$ it was found that
\begin{equation}
  \Pr \left\{ W > b \right\}=\Theta \left(
    b^{2}\bar{B}\xspace 
    ^{2}(b)\right) ,  \label{RHO_L_1}
\end{equation}
whereas for the case $\rho \in \left( 1,2\right) $, \cite{MR2201624}
obtained that
\begin{equation}
  \Pr \{W > b\}=\Theta \left( b\bar{B}\xspace
    (b)\right),  \label{RHO_G_1}
\end{equation}
as $b \rightarrow \infty$\footnote{From here on, we avoid the
  quantification $b \rightarrow \infty$ whenever it is evident from
  the context}. Since there is a sharp difference between the cases
$\rho < 1$ and $\rho \in (1,2)$ as in \eqref{RHO_L_1} and
\eqref{RHO_G_1}, it has been of great interest to identify what
happens when $\rho$ equals 1. We resolve this in our work by noting
that (\ref{AS_1}) is much closer to the case $\rho <1$ than it is to
the case $\rho >1$. Although, quantitatively, the rates of convergence
between the two terms in (\ref{AS_1}) might differ only by a
multiplicative function which varies slowly, the qualitative picture
behind the mechanism that gives rise to them is dramatically
different. The first term in the right hand side of (\ref{AS_1})
arises from the same type of phenomena behind the tail behaviour in the
case $\rho <1$.

In Section \ref{SEC-RESULT-INTUIT-QUEUES}, in addition to introducing
the notation required to precisely state our results, we discuss at
length the intuition behind both the asymptotic results
(\ref{RHO_L_1}) and (\ref{RHO_G_1}), as well as our asymptotic
expression (\ref{AS_1}). At this point, it suffices to say that the
phenomena underlying the development of (\ref{RHO_L_1}) and
(\ref{RHO_G_1}) are a combination of two features, first, arrival of
large jobs whose effects persist for long time scales, and, second,
the impact of such effects, which is measured using the Law of Large
Numbers. In contrast, the development of (\ref{AS_1}) involves not
only the combination of these two features, but, in addition, one has
to account for the impact of effects which occur at the scales
governed by the Central Limit Theorem.

We identify another interesting phenomenon when the job sizes have
infinite variance: If $\rho =1$ and $\alpha \in \left(1,2\right),$ it
turns out that the asymptotics are governed by
\begin{equation*}
  \Pr \left\{W>b\right\}=\Theta \left( b^{\alpha }\bar{B}
    \xspace(b^{\alpha })\right),
\end{equation*}
suggesting that the tail behaviour is closer to the case $\rho >1$
than to the case $\rho <1.$ This is a sharp transition from the system
behaviour when $\text{Var}[V] < \infty,$ where the tail asymptotic is
closer to the $\rho < 1$ case.  Such surprising transitions in system
behaviour seem to be unique to the integer traffic intensity case. 

In summary, the qualitative development behind our asymptotic bounds
introduces a combination of elements that are not typical in the
asymptotic analysis of multiserver queues. After developing necessary
intuition behind the results \eqref{AS_1}, \eqref{RHO_L_1} and
\eqref{RHO_G_1} in Section \ref{SEC-RESULT-INTUIT-QUEUES}, we derive
the respective lower and upper bounds in \eqref{AS_1} in Sections
\ref{SEC-LB} and \ref{SEC-UB1}. Apart from unraveling surprising
transitions in the system behaviour that seem to happen only when the
traffic intensity is an integer, an important contribution of this
paper is in the use of regenerative ratio representation and Lyapunov
bound techniques to characterize tail behaviour of steady-state delay
in multiserver queues. An alternate proof for the upper bound, that
takes inspiration from a completely different approach due to
\cite{MR2201624} and \cite{Foss_MoOR}, is reported in
\cite{karthyek_thesis}. However, in \cite{MR2201624} and
\cite{Foss_MoOR}, it is crucial to have $\rho $ not equal to an
integer so that certain upper bound processes might be defined.  So,
we believe that our alternate approach presented in
\cite{karthyek_thesis} might add useful ideas to the traditional
techniques used in the asymptotic analysis of multiserver queues.

\section{The main result and its intuition}
\label{SEC-RESULT-INTUIT-QUEUES}
We consider a two-server queue that processes incoming jobs under the
first-come-first-serve discipline. Jobs are indexed by the order of
arrival. Job $0$ arrives at time $0,$ and for $n \geq 1,$ job $n$
arrives at time $T_{1}+\ldots +T_{n}$. Job $n$ requires service for
time $V_n.$ Here the sequence of interarrival times $(T_{n}:n\geq 1)$
and service times $(V_n: n \geq 0)$ are taken to be i.i.d. copies,
respectively, of the generic interarrival and service time variables
$T$ and $V.$ As mentioned in the Introduction, we assume that
$\E T=\E V,$ and hence the traffic intensity $\rho,$ which is the
ratio between $\E V$ and $\E T,$ equals 1. To make the computations
easier, we assume, without loss of generality, that $\E T = 1$
(otherwise, time can always be rescaled to make this hold).
Additionally, we make the following assumptions on the distributions
of $V$ and $T.$
\begin{assumption}
\label{ASSUMP-DIST-V}
The tail distribution of $V$ admits the representation,
\[\bar{B}(x):=\Pr\{V > x\} = x^{-\alpha }L\left( x\right),\] for some
$\alpha >1$ and a function $L\left( \cdot \right) $ slowly varying at
infinity, that is, $\lim_{x \rightarrow \infty} L(tx)/L(x) = 1$ for
every $t > 0.$
\end{assumption}
\begin{assumption}
\label{ASSUMP_LEFT_TAIL}
$\Pr \{T>x\}=o(\bar{B}(x))$.
\end{assumption}
\noindent Assumption \ref{ASSUMP_LEFT_TAIL} is quite natural given
that typically one models interarrival times as exponentially
distributed random variables. We also use the notation
\[X_{n+1}=V_{n}-T_{n+1} \text{ for } n\geq 0.\]
Since $T$ is non-negative, the right-tail of $X := V-T$ is
asymptotically similar to that of $V$ (see, for example, Corollary
1.11 in Chapter IX of \cite{asmussen2000ruin}). In other words,
\begin{align}
  \label{tail_equiv_F_B}
  \Pr \left\{ X > x \right\} \sim \bar{B}(x) \text{ as } x \rightarrow
  \infty. 
\end{align}

The ordered workload vector of the servers as seen by the $n^{th}$ job
during its arrival, denoted by
${\bf W}_{n}=(W_{n}^{(1)}\xspace,W_{n}^{(2)}\xspace ),$ satisfies the
well-known Kiefer-Wolfowitz recursion:
\begin{subequations}
\begin{align}
  W_{n+1}^{(1)}& =\left( W_{n}^{(1)}\xspace+V_{n}-T_{n+1}\right)^{+}\wedge
  \left( W_{n}^{(2)}\xspace-T_{n+1}\right)^{+} \text{ and }  \label{MIN-REC}\\
  W_{n+1}^{(2)}& =\left(
    W_{n}^{(1)}\xspace+V_{n}-T_{n+1}\right)^{+}\vee \left(
    W_{n}^{(2)}\xspace-T_{n+1}\right)^{+}.  \label{MAX-REC}
\end{align}
\end{subequations}
Since $\rho <2,$ the queue is stable in the sense that the weak limit
(limit in distribution) of ${\bf W}_{n},$ denoted by
${\bf W}_{\infty },$ exists and we are interested in deriving bounds
for the tail probabilities of the steady-state waiting time
\begin{equation*}
  \Pr \left\{W_{\infty }^{(1)}\xspace>b\right\}=\lim_{n\rightarrow \infty }\Pr \left\{
    W_{n}^{(1)}\xspace>b\right\} ,
\end{equation*}
for large values of $b.$ Our main result is the following.
\begin{theorem}
  Suppose that $\rho =1 $ and Assumptions \ref{ASSUMP-DIST-V} and
  \ref{ASSUMP_LEFT_TAIL} are in force. If $\alpha >2$, then
\begin{equation}
  \Pr \left\{ W_{\infty }^{(1)}>b\right\} =\Theta \left( b^{2}\bar{B}\xspace
    (b^{2})+b^{2}\bar{B}\xspace^{2}(b)\right) ,\text{ as }b\rightarrow \infty.
\label{FV}
\end{equation}
If $\alpha \in (1,2),$  under the additional assumption that
$\bar{B}(x) \sim cx^{-\alpha}$ for some $c > 0,$  we have that
\begin{equation}
  \Pr \left\{ W_{\infty }^{(1)}>b\right\} =\Theta
  \left( b^{\alpha }\bar{B} \xspace(b^{\alpha })\right) ,\text{ as
  }b\rightarrow \infty .  \label{IV}
\end{equation}
\label{THM-ASYMP}
\end{theorem}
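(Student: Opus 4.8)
The plan is to treat the two regimes separately and, within each, to establish matching lower and upper bounds. For the lower bounds the strategy is the usual ``single big event'' reduction: one identifies an explicit rare scenario whose probability is of the claimed order and which forces $W_\infty^{(1)} > b$. For $\alpha > 2$ there are two competing scenarios, matching the two terms in \eqref{FV}. In the first, a single job of size of order $b^2$ arrives; because both servers must clear the backlog it creates and the net input drifts to zero, its effect persists over a time window of order $b^2$, during which the workload seen by a later job stays above $b$ --- this contributes $b^2 \bar B(b^2)$. In the second scenario, two jobs each of size of order $b$ arrive within a window of length of order $b$ (one to each server), so that both server workloads are simultaneously of order $b$; the probability of two such jobs landing in a window of length $b$ is of order $b^2 \bar B^2(b)$. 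For $\alpha \in (1,2)$ only the analogue of the first scenario survives (the ``two big jobs'' scenario is now of strictly smaller order), and tuning the single job size to order $b^\alpha$ gives the lower bound $b^\alpha \bar B(b^\alpha)$ in \eqref{IV}. Making these heuristics rigorous uses Assumption \ref{ASSUMP_LEFT_TAIL}, the tail equivalence \eqref{tail_equiv_F_B}, and a law-of-large-numbers / central-limit control on the interarrival and service partial sums over the relevant windows; this is routine but the window-length bookkeeping must be done carefully.

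For the upper bound I would use the regenerative ratio representation of the stationary distribution together with a Lyapunov (drift) argument, as advertised in the introduction. Pick a regeneration structure for the Kiefer--Wolfowitz chain $\mathbf W_n$ --- e.g. returns to a suitable ``small'' set where both coordinates are bounded --- and write
\begin{equation*}
  \Pr\{W_\infty^{(1)} > b\} = \frac{\E\left[\sum_{n=0}^{\tau-1} \mathbf 1\{W_n^{(1)} > b\}\right]}{\E[\tau]},
\end{equation*}
where $\tau$ is the regeneration time. The numerator is then bounded by splitting on which mechanism drove the chain above level $b$: either some single $X_k = V_{k-1} - T_k$ within the cycle was of order $b^2$ (respectively $b^\alpha$), or two increments of order $b$ occurred, or the excursion above $b$ was built up by ``typical'' fluctuations. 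The last contribution is shown to be negligible by a Lyapunov function argument: one exhibits $g(\mathbf w)$ with $g(\mathbf w) \asymp w^{(1)} + w^{(2)}$ (or a suitable power thereof) satisfying a negative-drift inequality $\E[g(\mathbf W_{n+1}) \mid \mathbf W_n = \mathbf w] \le g(\mathbf w) - \delta$ off the small set, which controls the probability of reaching high levels without a big jump, and Markov-inequality-type estimates on the rare-jump contributions give exactly the two stated terms. The tail-equivalence \eqref{tail_equiv_F_B} and the regular variation of $\bar B$ (via Karamata / Potter bounds) are used to evaluate $\E[\,\#\{k\le \tau: X_k > b^2\}\,]$ and $\E[(\#\{k\le\tau: X_k > b\})^2]$.

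The main obstacle, and the reason the integer-$\rho$ case is genuinely harder, is precisely the presence of the intermediate, Central-Limit-governed contribution in the $\alpha>2$ case: when $\rho=1$ the centered workload process has \emph{no} drift, so a big job of size $b^2$ raises the workload which then returns to zero only on the diffusive time scale of order $b^2$, and one must show that over this long window the ``typical'' $\sqrt{t}$ fluctuations neither prematurely empty the system (which would kill the lower bound) nor push an extra job above $b$ in a way that inflates the upper bound beyond $b^2\bar B(b^2) + b^2\bar B^2(b)$. Pinning down this $\Theta$-sharp window length --- and showing the two-big-jobs term is not dominated by, nor dominating of, the one-big-job term except through the slowly varying factor --- requires careful second-moment estimates on the number of ``large-ish'' jobs in a cycle and a delicate analysis of the Kiefer--Wolfowitz minimum $W_{n+1}^{(1)}$, since it is the \emph{smaller} of the two server workloads and only becomes large when \emph{both} servers are loaded. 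I expect the bulk of the technical work to be in this coupling of the big-jump scenarios with the diffusive background, whereas the Lyapunov inequality itself and the regeneration setup should be comparatively standard.
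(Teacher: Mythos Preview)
Your overall plan --- regenerative ratio representation for both bounds, explicit rare scenarios for the lower bound, and a Lyapunov-drift argument for the upper bound --- is exactly the paper's architecture. The lower-bound scenarios you name (one job of size $\sim b^2$ or $b^\alpha$; two jobs of size $\sim b$) are the right ones and match the paper's events $D_1, D_2, D_3$ in Section~\ref{SEC-LB}.

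There is, however, a slight misdescription of the one-big-jump mechanism that matters for execution. A single job of size $b^2$ does not by itself keep $W^{(1)}$ (the \emph{minimum} coordinate) above $b$: it goes to one server, sending $W^{(2)}$ to order $b^2$, while $W^{(1)}$ starts near zero. The reason $W^{(1)}$ climbs to order $b$ is that the \emph{active} server now behaves like a critically loaded single-server queue, whose workload after $k$ steps is $\Theta(\sqrt{k})$ by the functional CLT; with the blocked server out of play for $\Theta(b^2)$ steps, $W^{(1)}$ reaches $\Theta(b)$ with positive probability. The paper makes this precise via an event $D_1'$ involving $\min_{0.5b^2\le n\le 2.5b^2} S_n > b$ for the zero-mean walk $S_n$ driving the active server.

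For the upper bound, your three-way split (one jump of size $b^2$ / two jumps of size $b$ / pure fluctuations) and your proposed computations $\E[\#\{k\le\tau:X_k>b^2\}]$, $\E[(\#\{k\le\tau:X_k>b\})^2]$ are not quite the right objects: the first is $O(\bar B(b^2))$ by Wald, missing the factor $b^2$ that counts how many customers wait more than $b$ after the big job, and neither captures the interaction between a single jump of \emph{intermediate} size $x\in(b,b^2)$ and the CLT fluctuations of the active server during the ensuing window of length $x$. The paper's decomposition is different: it conditions on the first time $W^{(2)}$ exceeds $b\delta_-$ and then asks whether $W^{(1)}$ reaches $b\delta$ before $W^{(2)}$ falls back below $b\delta_+$ (the $B_1+B_2$ split). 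The key tool you do not name, and which does the real work on $B_1$, is a \emph{uniform} large-deviation estimate of Nagaev type (Lemma~\ref{LEM-MAX-ABS-SUM-ASYMP}):
\[
\Pr\Bigl\{\max_{0\le n\le m}|S_n|>x\Bigr\}\le C\Bigl(\Pr\Bigl\{\sigma\max_{0\le t\le 1}|B(t)|>x/\sqrt m\Bigr\}+m\,\Pr\{|X|>x\}\Bigr),
\]
valid uniformly for $x\ge\sqrt m$. Integrating this against the (stochastically dominated, Lemma~\ref{LEM_ST_DOM}) overshoot distribution of the first big jump produces both terms $b^2\bar B(b^2)$ and $b^2\bar B^2(b)$ simultaneously, without a case split by jump size. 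The Lyapunov inequality you anticipate is indeed used, but for a more specific purpose than generic drift control: it yields the uniform bound $\Pr_{\mathbf w}\{\tau_b^{(2)}<\tau_0\}\le C(w_2+1)\bar B(b)$ (Lemma~\ref{LEM_LB_PROB}), which handles the ``wasted first jump'' term $B_2(b)$.
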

\noindent We now proceed to discuss how this result contrasts with
what is known in the literature and thereby expose the intuition
behind it.

\subsection{Discussion of earlier results in the literature} 
As indicated in the Introduction, the tail asymptotics of steady-state
delay is known depending on the case $\rho <1 $ (or)
$\rho \in \left( 1,2\right),$ and is given by (\ref{RHO_L_1}) and
(\ref{RHO_G_1}), respectively. In order to see the mechanism behind
these two asymptotics, let us assume without loss of generality that
$\E T = 1$ (if not, time can be rescaled to make this assumption
hold). Additionally, let us assume that the generic interarrival time
$T$ has unbounded support (for example, $T$ is exponentially
distributed), and consider the regenerative ratio representation
\begin{equation}
  \mathbb{P}\left\{ W_{\infty }^{(1)}>b\right\} =\frac{\mathbb{E}_{\bf
      0}\left[ \sum_{k=0}^{\tau _{0}-1}I\left( W_{k}^{\left( 1\right) }>b\right) \right] }{
    \mathbb{E}_{\bf 0}\left( \tau _{0}\right) },  
\label{REG-RAT-REP}
\end{equation}
where $\tau _{0}=\inf \{n\geq 1:W_{n}^{\left( 2\right) } = 0\}$
denotes the first time when the Kiefer-Wolfowitz process ${\bf W}_n$
enters the set $\{(0,0)\}.$ Since $\rho < 2$ and $T$ has unbounded
support, the state $(0,0)$ is recurrent, thus leading to the
regenerative ratio representation \eqref{REG-RAT-REP} % (see, for
% example, Chapter 6 of \cite{asmussen2003applied} for further
% details). 
For simplicity, throughout our discussions, we shall assume that $T$
has unbounded support. This assumption is merely technical. It can be
relaxed at the price of using a slightly more complicated regenerative
representation. For further details on the representation
\eqref{REG-RAT-REP} and details on relaxing the assumption on support
of $T,$ see, for example, \cite{borovkov1984asymptotic},
\cite{foss1986method}, \cite{kalashnikov1980stability},
\cite{kalashnikov1990mathematical}, or \cite{foss1991regeneration}.
Moreover, our alternate proof of the upper bound presented in
\cite{karthyek_thesis} does not rely on this assumption.

In order to study \eqref{REG-RAT-REP}, define the stopping times
\begin{align*}
  \tau _{b}^{\left( i\right) } &:=\inf \{n\geq 0:W_{n}^{\left( i \right)
                                   }>b\}, \quad i = 1,2.
\end{align*}
First, let us consider the event
$\{\tau _{b}^{\left( 1\right) }<\tau _{0}\}$, which is the event that
there is at least one customer who waits more than $b$ units of time
in a busy period. Moreover, since
$\tau _{b}^{\left( 2\right) }<\tau _{b}^{\left( 1\right) },$ it is
instructive to first consider the event
$\{\tau _{b}^{\left( 2\right) }<\tau _{0}\}$, which can be seen,
intuitively, to be caused by the arrival of a big job of size larger
than $b$ within the initial $O\left( 1\right) $ units of time in the
busy period. Due to this reasoning, one can write
\begin{align}
  \Pr_{\bf 0} \left\{ \tau_b^{(2)} < \tau_0\right\} = \Theta \left( \Pr\{ V
  > b\} \right) \text{ and } \Pr\left\{ W_{\tau_b^{(2)}}^{(2)} > x
  \left \vert \frac{}{} \right. \tau_b^{(2)} < \tau_0 \right\}
  \approx \Pr\left\{ V > x \left \vert \frac{}{} \right. V > b \right\}.
                                                                       % %
                                                                       % W_{\tau_b^{(2)}}^{(2)}
                                                                       % \left
                                                                       % \vert
                                                                       % \frac{}{}
  % \right. \tau_b^{(2)} < \tau_0 \overset{D}{\approx} b Z,
  \label{APPROX-AFTER-JUMP1}
\end{align}
Therefore, one can approximately characterize the process ${\bf W},$
immediately after the arrival of the first big job of size larger than
$b,$ as below:
\begin{align}
  \frac{1}{b}{\bf W}_{\tau_b^{(2)}} = \frac{1}{b}\left(
  W_{\tau_b^{(2)}}^{(1)},  W_{\tau_b^{(2)}}^{(2)} 
  \right) \approx \left( 0, Z \right), 
  \label{SNAPSHOT-AFTER-JUMP1}
\end{align}
where $Z$ satisfies
$\Pr\{ Z > x\} = \lim_{b \rightarrow \infty} \Pr\left\{ V > bx \left
    \vert \frac{}{} \right. V > b \right\} = x^{-\alpha} \text{ for }
x \geq 1.$
As per recursions \eqref{MIN-REC} and \eqref{MAX-REC}, the server that
gets to process this big job cannot process any new arrivals until
both the workloads become comparable again at some time in the future,
which we refer as $\tau_{eq}.$ During this period where one of the
servers is effectively blocked from processing new arrivals (call it
the blocked server and the other server as active server), the
dynamics of the queue is given by:
\begin{align*}
  {\bf W}_n = \left( \left( W_{n-1}^{(1)} + V_{n-1}-T_n \right)^+,
  W_{n-1}^{(2)} - T_n \right), \quad \tau_b^{(2)} < n < \tau_{eq}. 
\end{align*}
The dynamics of the active server matches with that of the single
server queue, and hence the waiting time experienced by the $k^{th}$
job after the big jump can be be roughly approximated, in
distribution, by maximum of $k$ steps of a random walk with increments
that are i.i.d. copies of $V-T.$ Observe that the aforementioned
random walk has drift equal to $\rho-1,$ which can be positive, zero
(or) negative, respectively, based on whether $\rho > 1, \rho =1$ (or)
$\rho < 1.$ As a consequence, the maximum of the random walk, in the
respective cases, can be of magnitude $O(k), O(\sqrt{k})$ (or) $O(1)$
in $k$ units of time (this can be seen by invoking Law of Large
Numbers and Central Limit Theorem for i.i.d. sums). Therefore, due to
\eqref{SNAPSHOT-AFTER-JUMP1}, the workload until time $\tau_{eq}$ can
be approximately written as
\begin{align}
  \label{W_WHEN_BLOCKED}
  W_{\tau_b^{(2)} + k }^{(1)} \approx \begin{cases} 
    c_1 k \quad\quad \text{ if } \rho > 1,\\
    c_2 \sqrt{k} \quad \text{ if } \rho = 1,\\
    O(1) \quad\  \text{ if } \rho < 1
 \end{cases}
\text{ and } W_{\tau_b^{(2)}+k}^{(2)} \approx bZ - k 
\end{align}
for some positive constants $c_1$ and $c_2.$ Because of this clear
difference in behaviour of $W^{(1)}$ based on the value of $\rho,$ we
need to consider cases $\rho \in \left( 1,2\right) ,\rho <1$ and
$\rho = 1$ separately. We once again stress that our discussion in
this section is completely heuristic, aiming to emphasize the
intuition behind the results. While cases $\rho \in (1,2)$ and
$\rho < 1$ are treated rigorously in \cite{MR2201624}, future sections
in this paper are devoted to the rigorous treatment of the case
$\rho = 1.$

\subsubsection{Case 1: $\rho \in (1,2)$}
If $\rho \in \left( 1,2\right),$ then one server is not enough to keep
the system stable. As a result, when one server is blocked for $O(bZ)$
units of time due to the arrival of a big job, the active server
effectively becomes a single server processing all the arrivals, and
hence the workload $W^{(1)}$ gradually increases with time as in
\eqref{W_WHEN_BLOCKED}. Recall that $\tau_{eq}$ is the time where both
the servers have roughly equal workload, and therefore due to
\eqref{W_WHEN_BLOCKED}, we solve for $\tau_{eq}$ by setting
\begin{align*}
  c_1 \left(\tau_{eq} -
  \tau_b^{(2)}\right)  \approx bZ - \left(\tau_{eq} -
  \tau_b^{(2)}\right).
\end{align*}
As a result, $W^{(1)}$ increases roughly up to time
\begin{align*}
  \tau_{eq} \approx \tau_b^{(2)} + \frac{bZ}{c_1 + 1}, 
\end{align*}
when both $W^{(1)}$ and $W^{(2)}$ become comparable, after which both
the servers jointly process incoming arrivals according to
\eqref{MIN-REC} and \eqref{MAX-REC}, resulting in a total decrease of
workload at rate $2-\rho.$ In this mechanism, for any job to be
delayed by more than $b$ units of time, it must happen that
$c_1 k \geq b$ for some $k \leq bZ/(c_1 + 1),$ and therefore,
\begin{align}
  \lim_{b \rightarrow \infty}\Pr_{\bf 0}\left\{ \tau_b^{(1)} < \tau_0 \ \left
  \vert \frac{}{} \right. \ 
  \tau_b^{(2)} < \tau_0\right\} = \lim_{b \rightarrow
  \infty}\Pr\left\{ c_ 1 \frac{bZ}{c_1 + 1} \geq b\right\} = \Pr
  \left\{ Z > 1 + \frac{1}{c_1}\right\} >   0. 
  \label{SEC-JUMP-PROB}
\end{align}
\noindent If we let $N_1$ to denote the number of jobs that experience
at least $b$ units of delay up to time $\tau_{eq}$ and $N_2$ to denote
the respective count after $\tau_{eq},$ then the above heuristics
suggest that
\begin{align*}
  N_1 &= \frac{ \left(W_{\tau_{eq}}^{(1)} -b\right)^+}{c_1} = \left(
        \frac{bZ}{c_1 + 1} - \frac{b}{c_1}\right)^+ \text{ and  }\\
  N_2 &= \frac{\left( W_{\tau_{eq}}^{(1)} - b\right)^+}{2-\rho} =
        \frac{1}{2-\rho} \left( \frac{c_1bZ}{c_1 + 1} - b \right)^+.
\end{align*}
Therefore, due to \eqref{SEC-JUMP-PROB}, we obtain that
\begin{align*}
  \E_{\bf 0} \left[ \sum_{k=0}^{\tau_0-1} I\left( W_k^{(1)} >
  b\right)\right] &=   \E_0 \left[ \sum_{k=0}^{\tau_0-1} I\left(
                    W_k^{(1)} > b\right) \ \left \vert \frac{}{} \right. \ \tau_b^{(1)}
                    < \tau_0  \right] \times \Pr\left\{ \tau_b^{(1)} < \tau_0 \right\}\\
                  &\approx \E \left[ N_1 + N_2 \ \left \vert \frac{}{} \right. Z > 1 +
                    \frac{1}{c_1} \right] \times \Pr \left\{ Z > 1 + \frac{1}{c_1}
                    \right\} \times \Theta \left( \Pr \left\{ V >
                    b\right\} \right)\\
                  &= \Theta \left( b \times
                    \Pr \left\{ Z > 1 + \frac{1}{r}\right\} \times \bar{B}(b) \right).
\end{align*}
As a result, from \eqref{REG-RAT-REP}, we obtain that
\begin{align*}
  \Pr\left\{ W_\infty^{(1)} > b \right\} &= \Theta\left( b\bar{B}(b) \right),
\end{align*}
which is precisely same as \eqref{RHO_G_1}. This final form of
asymptotic is rigorously established in \cite{MR2201624}, albeit,
using a different reasoning.

\subsubsection{Case 2: $\rho < 1$} If $\rho < 1,$ conditional on the
occurrence of $\{ \tau_b^{(2)} < \tau_0\},$ it is no longer true that
the event $\{ \tau_b^{(1)} < \tau_0\}$ happens with positive
probability as $b \rightarrow \infty$ (compare this with
\eqref{SEC-JUMP-PROB} when $\rho \in (1,2)$). The reason is that if
$\rho <1$, the system is stable and the workload remains $O(1),$ as in
\eqref{W_WHEN_BLOCKED}, even if one removes one server and force it to
operate as a single server system.  As a result, we need to invoke
heavy-tailed large deviations behaviour, which dictates that arrival
of one more job of size larger than $b$ is required, typically, to
experience waiting time larger than $b.$ This requirement is dealt as
follows: Conditional on the occurrence of $\{\tau_b^{(2)} < \tau_0\},$
as in \eqref{W_WHEN_BLOCKED}, we have
  \begin{align*}
    W_{\tau_b^{(2)} + k}^{(1)} = O(1) \text{ and }
    W_{\tau_b^{(2)}+k}^{(2)} \approx bZ - k. 
  \end{align*}
  Here, the workload $W^{(2)}$ becomes smaller than $b$ if
  $k > b(Z-1),$ and therefore, the cheapest way to observe large
  delays (of duration at least $b$) is to have a $K \leq b(Z-1)$ such
  that the $(\tau_b^{(2)} + K)^{th}$ job requires service for duration
  larger than $b.$ %Therefore,
  % \begin{align*}
  %   \Pr\left\{ \tau_b^{(1)} < \tau_0 \ \left\vert \frac{}{}
  %   \right. \ \tau_b^{(2)} < \tau_0 \right\} \approx \Pr \left\{ \max_{0
  %   < k < b(Z-1)} V_{\tau_b^{(2)} + k} > b \right\} \approx \E \left[
  %   b(Z-1) \right] \Pr\{ V > b\}. 
  % \end{align*}
  Following the same line of reasoning behind
  \eqref{SNAPSHOT-AFTER-JUMP1}, we approximate the size of the second
  big job by $b\hat{Z},$ where $\hat{Z}$ is an independent copy of
  $Z.$ As a result, we arrive at the following distributional
  approximation :
  \begin{align*}
    {\bf W}_{\tau_b^{(1)}}^{(1)} \approx \min \left( bZ-K_1,
    b\hat{Z} \right) \text{ and }
    {\bf W}_{\tau_b^{(1)}}^{(2)} \approx \max \left( bZ-K_1,
    b\hat{Z} \right). 
  \end{align*}
  Next, the number of jobs that get delayed by more than $b$ units of
  time (which depends on $K$) is approximately given by
  \[ N(K) := \frac{\min(bZ-K, b\hat{Z})-b}{2-\rho}, \]
  where $K \leq b(Z-1).$ As a result,
  \begin{align*}
    &\E_{\bf 0} \left[ \sum_{k=0}^{\tau_0-1} I\left( W_k^{(1)} >
      b\right)\right] \approx   \E \left[
      N \left( K\right) I\left(0 \leq K \leq
      b(Z-1) \right) \ \left \vert \frac{}{}
      \right. \tau_{b}^{(2)} < \tau_0\right] \times \Pr\left\{ \tau_b^{(2)} <
      \tau_0 \right\}\\ 
    &\quad\quad= \E \left[ \sum_{k=1}^{b(Z-1)}  \min\left(
      b(Z-1)-k, b(\hat{Z} - 1)\right) I\left(
      V_{\tau_b^{(2)} + k} > b \right) \ \left \vert \frac{}{}
      \right. \tau_{b}^{(2)} < \tau_0 \right] \times
      \Theta\left( \Pr\{ V > b\} \right)\\
    &\quad\quad= \Theta \left( b^2 \Pr\left\{ V > b\right\}^2 \right),  
  \end{align*}
  and therefore,
  $\Pr\{ W_{\infty}^{(1)} > b\} = \Theta(b^2 \bar{B}^2(b)),$ which
  coincides with \eqref{RHO_L_1}.

\subsection{Intuitive discussion of Theorem 1: The case ${\bf \rho =
    1}$} 
Our goal in this discussion is to communicate the following insights:
\begin{itemize}
\item[1)] Contrary to Case 1 and Case 2, the conditional distribution
  of the Kiefer-Wolfowitz vector ${\bf W}$ given that
$\{\tau _{b}^{\left( 1\right) }<\tau _{0}\}$\ does not fully explain
the mechanism behind the asymptotic results in Theorem 1.

\item[2)] Unlike Cases 1 and 2, it is not enough to account for the
    impact of the large service times using linear dynamics which
    evolve according to the Law of Large Numbers.
\end{itemize}
\noindent We shall first concentrate on the situation where the job
sizes $V$ have finite variance, more precisely, the case $\alpha
>2$. The case $\alpha \in \left( 1,2\right)$ can be understood using
similar ideas. We shall leverage off the type of arguments that were
given for Case 1 and Case 2. Since $\rho =1$ sits right in the middle
we shall consider two mechanisms, one involving two jumps (analogous
to Case 2), and one involving one jump (analogous to Case 1).

\noindent \textit{Delays due to two jumps:} Conditional on
$\{ \tau_b^{(2)} < \tau_0\},$ similar to cases 1 and 2, the dynamics
of the active server and the blocked server, as in
\eqref{W_WHEN_BLOCKED}, are given respectively by
\begin{align}
  W_{\tau_b^{(2)} + k}^{(1)} \approx c_2\sqrt{k} \text{ and }
  W_{\tau_b^{(2)} + k} \approx bZ - k,
\label{DYN-RHO-EQ-1}
\end{align}
for $k$ such that $\tau_b^{(2)} + k \leq \tau_{eq}.$ As discussed
previously, fluctuations of order $\sqrt{k}$ arise in workload due to
the Central Limit Theorem, and this phenomenon, as we shall see below,
gains relevance only when $\rho = 1.$ Since our interest here is in
studying delays due to the occurrence of two big jumps, as in Case 2,
if there exists a $K < b(Z-1)$ such that $(K + \tau_b^{(2)})$-th
customer brings a job of size $b - O(\sqrt{b})$ or larger, then at
least one job gets delayed by $b$ units or more. The contribution to
$\Pr\{ \tau_b^{(1)} < \tau_0\}$ due to the occurrence of 2 jumps can
be calculated as below:
\begin{align}
  \label{RHS_PROB_1}
  P_{\textnormal{2 jumps}}(b)
  &:=\Pr\left\{ \tau_b^{(2)} < \tau_0 \right\} \times \Theta \left( \Pr \left\{
    V_{\tau_b^{(2)} + k} > b - \sqrt{b} \text{ for some } k \leq b(Z-1)
    \  \left \vert \frac{}{} \right. \tau_b^{(2)} < \tau_0  \right\}
    \right)\\
  &=\Theta \left( \mathbb{P} \left\{ V>b\right\} \times \sum_{k=1}^{\infty
    }\mathbb{P}\left\{ bZ>k,V_{ k}>b - \sqrt{b} \right\} \right) \nonumber\\
  &=\Theta \left( \mathbb{P}\left\{ V>b\right\} ^{2} \times \sum_{k=1}^{\infty }
    \mathbb{P}\left( bZ>k\right) \right) = \Theta \left( b
    \bar{B}^2(b) \right).  \nonumber
\end{align}
Following the same line of reasoning as in Case 2, we obtain the
following contribution to
$\E_{\bf 0}[\sum_{k=0}^{\tau_0-1} I(W_k^{(1)} > b)]$ due to 2 jumps:
\begin{align}
  \label{RHS-EXP-2JUMPS}
  Q_{\textnormal{2 jumps}}(b) = \Theta \left(b^2 \bar{B}^2(b) \right).
\end{align}

\noindent \textit{Delay due to 1 jump:} Similar to Case 1, when
$\rho = 1,$ the active server accumulates work, albeit at a slower
rate, as given in \eqref{DYN-RHO-EQ-1}.  Since the workload of a
critically loaded single server queue grows like $O(\sqrt{k})$ in $k$
units of time, it is intuitive to expect that if there is a big jump
of size exceeding $b^2$ in the first $O(1)$ units of time of the busy
period, subsequently one of the servers gets blocked for more than
$b^2$ units of time, and the active server which faces all the
incoming traffic accumulates workload of size larger than $b,$ with
non-vanishing probability, in those $b^2$ units of time.  Therefore,
similar to \eqref{SEC-JUMP-PROB}, we have that
\begin{align*}
  \lim_{b \rightarrow \infty} \Pr \left\{ \tau_b^{(1)} < \tau_0 \
  \left \vert \frac{}{} \right. \tau_{b^2}^{(2)} < \tau_0 \right\} > 0.
 \end{align*}
 Therefore, due to \eqref{APPROX-AFTER-JUMP1}, the contribution to
 $\Pr \{ \tau_b^{(1)} < \tau_0 \}$ due to the arrival of only one big
 job is given by
\begin{align*}
  P_{\textnormal{1 jump}}(b) &\approx   \Pr \left\{ \tau_b^{(1)} < \tau_0 \
  \left \vert \frac{}{} \right. \tau_{b^2}^{(2)} < \tau_0 \right\}
                               \times \Pr\left\{ \tau_{b^2} < \tau_0
                               \right\} = \Theta \left( \bar{B}(b^2) \right),
\end{align*}
which is negligible compared to the right hand side of
\eqref{RHS_PROB_1}. As a result, we have that
\begin{align*}
  \Pr \left\{ \tau_b^{(1)} < \tau_0 \right\} \sim P_{\textnormal{2
  jumps}}(b) = \Theta \left( b \bar{B}^2(b) \right).
\end{align*}
However, accounting for the number of jobs that experience at least
$b$ units of delay dramatically changes the contribution of this
single huge jump in the computation of steady-state delay
probabilities. In particular, a single jump of size exceeding $b^2$
blocks one of the servers for $V\ |\ V > b^2 \approx b^2Z$ units of
time, and if we perform calculations similar to Case 1, we shall
obtain that $\Theta(b^2)$ jobs experience delays larger than $b.$ As a
consequence, we have the following contribution in the single, huge
jump regime:
\begin{align*}
  Q_{\textnormal{1 jump}} := \E \left[ \sum_{i=0}^{\tau_0-1} I\left(
  W_k^{(1)} > b\right) \ \left \vert \frac{}{}
  \right. \tau_{b^2}^{(2)} < \tau_0 \right] \times \Pr \left\{
  \tau_b^{(2)} < \tau_0 \right\}
  = \Theta \left( b^2 \Pr \left\{ V > b^2\right\} \right),
\end{align*}
which might not be negligible to the corresponding contribution due to
2 jumps derived in \eqref{RHS-EXP-2JUMPS}. In fact, as demonstrated in
an example in the Introduction, this contribution due to single huge
jump could be larger than its counterpart for 2 jumps based on the
slowly varying function $L(\cdot)$ (consider the example
$L(x) = \log (1+x)$). As a result, we have two competing components in
the expression for steady-state probability of delay in \eqref{FV}.

We conclude with a heuristic explanation of the mechanism involving
one jump for the case $\alpha \in \left( 1,2\right) $ if $\rho =1$. In
this case, once a server is blocked for $k$ units of time, the active
server operates as a critical single-server queue, processing jobs
requiring services with infinite variance, and due to the generalized
Central Limit Theorem, the workload of the critical queue exhibits
fluctuations of order $O( k^{1/\alpha })$. Therefore, if the initial
huge jump, which occurs within $O(1) $ units of time at the beginning
of the busy period, is of size larger than $ b^{\alpha },$ then this
huge job blocks one of the servers for more than $b^\alpha$ units of
time, and as a result, $\Theta(b^\alpha)$ jobs wait for a duration
larger than $b.$ Reasoning as in the finite variance case, the
contribution to steady-state delay due to the arrival of one huge job
is
$\Theta ( b^{\alpha }\mathbb{P}\{ V>b^{\alpha }\}) =\Theta ( b^{\alpha
}b^{-\alpha ^{2}}L( b^{\alpha })).$
On the other hand, the contribution arising from two jumps as in Case
2, namely, according to (\ref{RHO_L_1}), remains
$\Theta ( b^{2-2\alpha }L\left( b\right) ^{2}) $, which is negligible
compared to $\Theta ( b^{\alpha }b^{-\alpha ^{2}}L( b^{\alpha })) $
because $ \alpha \in \left( 1,2\right) $ implies
$2\alpha -2>\alpha ^{2}-\alpha $.  Hence, we arrive at the estimate
(\ref{IV}) in Theorem 1.

\section{Proof of lower bound}
\label{SEC-LB}
\noindent The objective of this section is to prove the following
result.

\begin{proposition}
\label{Proposition_LB}
Suppose that Assumption \ref{ASSUMP_LEFT_TAIL} holds,
and that $\rho =1$. Then, if Assumption \ref{ASSUMP-DIST-V} holds with $
\alpha >2$, there exists $c_{1}>0$ and $b_{0}>0$ such that for all $b>b_{0}.$
\begin{equation*}
\mathbb{P}\left\{ W_{\infty }^{(1)}>b\right\} \geq c_{1}\left( b^{2}\bar{B}
\xspace(b^{2})+b^{2}\bar{B}\xspace^{2}(b)\right) .
\end{equation*}
On the other hand, if $\bar{B}\left( x\right) \sim cx^{-\alpha }$ as $
x\rightarrow \infty $ for some $c>0$ and $\alpha \in \left( 1,2\right) $,
then there exists $c_{1}>0$ and $b_{0}>0$ such that for all $b>b_{0},$
\begin{equation*}
\mathbb{P}\left\{ W_{\infty }^{(1)}>b\right\} \geq c_{1}\left( b^{\alpha }%
\bar{B}\xspace(b^{\alpha })\right) .
\end{equation*}
\end{proposition}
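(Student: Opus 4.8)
The plan is to work from the regenerative ratio representation \eqref{REG-RAT-REP}. Since $\rho<2$ and $T$ has unbounded support, $\E_{\bf 0}(\tau_0)$ is a finite positive constant, so it is enough to bound the numerator $\E_{\bf 0}\big[\sum_{k=0}^{\tau_0-1}I(W_k^{(1)}>b)\big]$ from below by a quantity of the stated order. I will do this by exhibiting, within a single busy period, ``good'' events on which a controllable number of customers is delayed by more than $b$, and bounding the restricted expectation on each. When $\alpha>2$ two families are needed: a \emph{one-jump} family contributing order $b^2\bar B(b^2)$ and a \emph{two-jump} family contributing order $b^2\bar B^2(b)$; since neither term dominates for all slowly varying $L$, both are genuinely required. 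When $\alpha\in(1,2)$ only the one-jump family is used, the two-jump contribution being of strictly smaller order and hence harmless for a lower bound.

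\emph{The one-jump family.} Set $\gamma=2$ if $\alpha>2$ and $\gamma=\alpha$ if $\alpha\in(1,2)$, fix constants $0<\delta<\theta$, and consider $A'=\{V_0\ge\theta b^{\gamma}\}$, which has probability $\Theta(\bar B(b^{\gamma}))$. On $A'$ the recursions \eqref{MIN-REC}--\eqref{MAX-REC} give ${\bf W}_1=(0,(V_0-T_1)^+)$, and as long as the blocked workload remains the larger of the two, $W^{(1)}$ evolves exactly as the Lindley recursion of the critical GI/GI/1 queue started from $0$, so that $W^{(1)}_{1+m}\overset{d}{=}\max_{0\le j\le m}S_j$ with $S_j=\sum_{i=1}^j(V_i-T_{i+1})$ a mean-zero random walk independent of $V_0$. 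One checks that over a deterministic window $m\le\delta b^{\gamma}$ the blocking persists with conditional probability $1-o(1)$, after excising the negligible events that some $V_i$ in the window exceeds $b^{\gamma}/10$, that a partial sum of the $T_i$'s is atypically large, or that $W^{(1)}$ is atypically large. Restricting to $A'$ therefore gives, for all large $b$,
\[
 \E_{\bf 0}\Big[\sum_{k=0}^{\tau_0-1}I(W_k^{(1)}>b)\Big]\ \ge\ \tfrac12\,\Pr(A')\sum_{1\le m\le\delta b^{\gamma}}\Pr\big\{\textstyle\max_{0\le j\le m}S_j>b\big\}.
\]
Now I invoke the invariance principle at the scale $b=(b^{\gamma})^{1/\gamma}$: if $\alpha>2$ then $\mathrm{Var}(V-T)<\infty$ and Donsker's theorem gives $S_{\lfloor b^{2}u\rfloor}/b\Rightarrow\sigma\mathcal B_u$; if $\bar B(x)\sim cx^{-\alpha}$ with $\alpha\in(1,2)$, Assumption \ref{ASSUMP_LEFT_TAIL} puts $V-T$ in the normal domain of attraction of a spectrally positive $\alpha$-stable law and $S_{\lfloor b^{\alpha}u\rfloor}/b\Rightarrow\mathcal S_u$. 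In either case the continuous mapping theorem gives $\max_{j\le sb^{\gamma}}S_j/b\Rightarrow\sup_{0\le u\le s}(\text{limit process})$, a nondegenerate nonnegative random variable whose law charges $(1,\infty)$; hence $\Pr\{\max_{j\le sb^{\gamma}}S_j>b\}$ is bounded below by a positive constant for each fixed $s>0$ and all large $b$. Since $m\mapsto\Pr\{\max_{j\le m}S_j>b\}$ is non-decreasing, the sum above is at least $\tfrac{\delta}{2}b^{\gamma}\,\Pr\{\max_{j\le(\delta/2)b^{\gamma}}S_j>b\}\ge\varepsilon b^{\gamma}$ for large $b$, and combining with $\Pr(A')=\Theta(\bar B(b^{\gamma}))$ yields the claimed $\Theta(b^{\gamma}\bar B(b^{\gamma}))$ lower bound.

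\emph{The two-jump family} (needed only when $\alpha>2$). Fix a large constant $\kappa$ and a window length $M=\Theta(b)$, and let $A_2$ be the event that $V_0\in[\kappa b,2\kappa b]$ and at least one of the next $M$ customers has service size $\ge 4b$; say the first such is customer $n$. On $A_2$ one has ${\bf W}_1\approx(0,V_0)$ and, for $\kappa$ large, the blocked workload stays above $5b$ throughout the window, so blocking holds until customer $n$ arrives; when it does, \eqref{MIN-REC}--\eqref{MAX-REC} force \emph{both} server workloads to exceed $2b$ immediately afterwards (whichever of ``the large job joins the smaller server'' or ``the two roles swap'' occurs). Using the deterministic bound $W^{(1)}_{j+1}\ge W^{(1)}_j-T_{j+1}$, one then gets $W^{(1)}>b$ for each of the next $\lfloor b/2\rfloor$ customers on the (probability $1-o(1)$) event that the corresponding partial sum of interarrival times stays below $b$. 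The three ingredients depend on essentially disjoint blocks of the i.i.d.\ input (the overlap among the $T_i$'s is absorbed by a union bound), so $\Pr(A_2)\gtrsim\bar B(b)\big(1-(1-\bar B(4b))^{M}\big)\gtrsim b\bar B^{2}(b)$, whence $\E_{\bf 0}\big[\sum_{k=0}^{\tau_0-1}I(W_k^{(1)}>b)\big]\gtrsim\tfrac{b}{2}\cdot b\bar B^{2}(b)=\Theta(b^{2}\bar B^{2}(b))$. Taking the thresholds so that $A'$ (with $\gamma=2$) and $A_2$ are disjoint --- e.g.\ $V_0$ of order $b^{2}$ versus of order $b$ --- the two contributions add, and dividing by $\E_{\bf 0}(\tau_0)$ proves both parts of the proposition.

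\emph{Main obstacle.} Both mechanisms are conceptually straightforward; the delicate point is to legitimize the limit arguments while carrying the error terms uniformly in $b$ --- $W^{(1)}$ does not follow the single-server recursion forever, the post-jump state is only approximately $(0,\cdot)$, and in the two-jump case the second large arrival must be coordinated with the still-open blocking window. The clean way around this is to bound $W^{(1)}$ from below, on a deterministic window and after discarding an $o(1)$-probability exceptional set, by a process that is \emph{exactly} the Lindley recursion of a critical GI/GI/1 queue, so that Donsker's theorem --- respectively its $\alpha$-stable analogue, which is where the sharp tail hypothesis $\bar B(x)\sim cx^{-\alpha}$ is used --- applies directly; the remaining steps are routine estimates for heavy-tailed sums.
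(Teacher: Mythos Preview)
Your proposal is correct and follows essentially the same approach as the paper: both use the regenerative ratio representation \eqref{REG-RAT-REP}, build a one-jump event (first job of size $\Theta(b^{\gamma})$, then invoke the functional CLT for Brownian motion when $\alpha>2$ or the stable FCLT when $\alpha\in(1,2)$ to show $\Theta(b^{\gamma})$ customers are delayed) and, for $\alpha>2$, a two-jump event (first job of size $\Theta(b)$, a second job of size $\Theta(b)$ within a window of length $\Theta(b)$, then $\Theta(b)$ customers delayed via the deterministic bound $W^{(1)}_{j+1}\ge W^{(1)}_j-T_{j+1}$). The only differences are cosmetic: the paper counts delayed customers via the event $\{\min_{0.5b^{\gamma}\le n\le 2.5b^{\gamma}}S_n>b\}$ rather than summing the marginals $\Pr\{\max_{j\le m}S_j>b\}$, and it does not bother making the one- and two-jump events disjoint (taking the maximum of the two lower bounds suffices, since $a+b\le 2\max(a,b)$).
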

\noindent We now provide the proof of Proposition
\ref{Proposition_LB}. \\

\noindent 
\textbf{Case 1:} (Under the assumption that $\alpha >2$). We first
derive a lower bound based on a single big jump of size exceeding
$b^{2}.$ Let $N_A(t)$ denote the number of jobs that arrive in the
interval $(0,t].$ Let $b>2$ and consider the event, $D_1$, with the
following properties:

\begin{itemize}
\item[1)] The coordinate $W_{1}^{\left( 2\right) }>6b^{2}$ (that is,
  Job 0 blocks one of the servers for $\Omega\left( b^{2}\right) $
  time units).

\item[2)] The total amount of work brought by all the jobs that arrive
  in the time interval $(0,2b^2]$ does not exceed $3b^2.$ In other
  words, $V_1 + \ldots + V_{N_A(2b^2)} \leq 3b^2.$
%  \[ \sum_{i=1}^{N_A\left(2b^2\right)} V_i \leq 3b^2.\]

\item[3)] Every job that arrives in the time interval $[b^2,2b^2]$
  experiences delay for at least $b$ units of time before getting
  processed. That is,
  \[ \min_{N_A \left(b^2 \right) \leq n \leq N_A \left( 2b^2 \right)}
  W_n^{(1)} > b.\]
\end{itemize}
\noindent
On the set $D_1,$ the dynamics of the queue described by recursions
\eqref{MIN-REC} and \eqref{MAX-REC} reduces to
\begin{align*}
  W_n^{(1)} = \left(W_{n-1} + X_n \right)^+ \text{ and } W_n^{(2)} =
  W_{1}^{(2)} - \left( T_2 + \ldots + T_n \right)
\end{align*}
for $2 \leq n \leq N_A(2b^2).$
% because $W_1^{(2)} - 2b^2 \geq 4b^2 \geq V_1 + \ldots +
% V_{N_A(2b^2)}.$ 
Further, if we let $S_1 := W_1^{(1)} = 0$ and
$S_n := X_2 + \ldots + X_n$ for $n \geq 2,$ then the following holds
on the set $D_1:$
\begin{align*}
  \label{LB-CASE1-INTER1}
  \min_{N_A \left(b^2 \right) \leq n \leq N_A \left( 2b^2 \right)}
  W_n^{(1)} \geq \min_{N_A \left(b^2 \right) \leq n \leq N_A \left(
  2b^2 \right)} S_n.
\end{align*}
As a result,
\begin{align*}
  &\mathbb{E}_{\bf 0}\left[ \sum_{k=0}^{\tau _{0}-1}I\left( W_{k}^{\left(
    1\right) }>b\right) \right] \geq \mathbb{E}_{\bf 0}\left[
    \sum_{k=0}^{\tau  _{0}-1}I\left( W_{k}^{\left( 1\right) }>b\right)
    ;D_1 \right] \\ 
  &\quad\quad \geq \mathbb{E}_{\bf 0}\left[ N_A\left(2b^2\right) - 
    N_A\left(b^2\right) ; W_{1}^{\left( 2\right)
    }>6b^{2},\min_{N_A\left(b^{2}\right) \leq n\leq
    N_A \left(2b^{2}\right)} S_{n}>1.5b,\ \sum_{i=1}^{N_A(2b^2)} V_i \leq 3b^2
    \right]\\
  &\quad\quad \geq \Pr \left\{ X_1 > 6b^2\right\} \E\left[ N_A\left(2b^2\right) -
    N_A\left(b^2\right); D_1'\right],
  % &\quad\quad \geq b^2 \Pr\left\{ V_0 - T_1 > 4b^2\right\} \Pr
  %   \left\{ N_A\left(b^2\right) \geq 0.5b^2, \ N_A\left( 2b^2 \right)
  %   \in \left[ 1.5b^2,2.5b^2\right], \min_{0.5b^2 \leq n \leq 2.5b^2}
  %   S_n > b, \ \sum_{i=1}^{N_A(2b^2)} V_i \leq 3b^2 \right\}.
\end{align*}
where the event 
\begin{align*}
  D'_1 :=    \left\{ N_A\left(b^2\right) \geq 0.5b^2, \ N_A\left( 2b^2 \right)
  \in \left[ 1.5b^2,2.5b^2\right], \min_{0.5b^2 \leq n \leq 2.5b^2}
  S_n > b, \ \sum_{i=1}^{\lceil 2.5b^2 \rceil} V_i \leq 3b^2 \right\}
\end{align*}
has probability at least
\[\Pr \left\{ \inf_{0.5 \leq t \leq 2.5} \sigma B(t) > 1
\right\}(1-o(1))\]
because of functional CLT and the facts that
$N_{A}\left( x \right) /x \rightarrow 1$ and
$(V_1 + \ldots + V_n)/n \rightarrow 1$ with probability one.  Here
$B(\cdot)$ is a standard Brownian motion and $\sigma^2$ denotes the
variance of $X.$ Additionally, due to the regenerative ratio
representation \eqref{REG-RAT-REP} and the regularly varying nature of
the tail of $X$ (recall that $\Pr\{ X > x\} \sim \bar{B}(x)$ as
$x \rightarrow \infty$), we conclude that
\begin{align}
  \label{LB_1a}
  \Pr\left\{ W_\infty^{(1)} > b\right\}
  &\geq \frac{ b^2 \Pr \left\{ X > 6b^2
    \right\} \times \Pr \left( D_1' \right)}{\E \tau_0}\\
  &\geq c_{1}b^{2}\bar{B}\left( b^{2}\right) \nonumber
\end{align}
for some $c_{1}>0$ and all $b$ large enough.\\

\noindent 
\textbf{Case 2:} (Also under the assumption that $\alpha >2$). We now
derive a lower bound based on the occurrence of two jumps, each of
size exceeding $b.$ Let $b>2$ and consider the event, $D_2,$
% $\{D_k: k =1,2,\ldots,\lfloor b \rfloor\}$
 with the following
properties:

\begin{itemize}
\item[1)] The coordinate $W_{1}^{\left( 2\right) }> 5b$ (that is, Job
  0 blocks one of the servers for $\Omega\left( b\right) $ time
  units).

\item[2)] Apart from Job 0, only one of the $N_A(b)$ jobs that arrive
  in the time interval $(0,b]$ bring a service requirement of size
  exceeding $5b.$

\item[3)] The number of customers who arrive during the time intervals
  $(0,b]$ and $(b,2b]$ are numbers between $0.5b$ and $1.5b.$
  Alternatively,
  $N_A(b) \in [0.5b,1.5b] \text{ and } N_A(2b) - N_A(b) \in
  [0.5b,1.5b].$
\end{itemize}
\noindent 
So, on the set $D_2$ we have that at least
$N_A(2b) - N_A(b) \geq 0.5b $ jobs experience a
waiting time more than $b$ units of time, and hence
\begin{align*}
  \mathbb{E}_{\bf 0}\left[ \sum_{k=0}^{\tau _{0}-1}I\left( W_{k}^{\left(
  1\right) }>b\right) ;D_2 \right] \geq 0.5b \Pr\left(D_2\right).
\end{align*}
However, since $N_A(x)/x \rightarrow \infty,$ we have that
\[ \Pr\left\{ N_A(b) \in [0.5b,1.5b],\ N_A(2b)-N_A(b) \in [0.5b,1.5b]
\right\} \sim 1,\]
as $b \rightarrow \infty.$ As a result, 
\begin{align*}
  \Pr\left( D_2 \right) 
  &\geq \left(1-o(1)\right)\sum_{k \leq
    0.5b}\mathbb{P}_{\bf 0}\left\{  
    W_1^{(2)} > 5b,\ V_{k}>5b, \ \bigcap_{i \leq 1.5b, i \neq
    j}\left\{ V_i < 5b \right\}\right\} \\    
  &\geq 0.5b \mathbb{P}\left\{ X_1 >5b\right\} \bar{B}(5b) \left(
    1-\bar{B}(5b)\right)^{1.5b}\left(1-o(1)\right)\\
  &\geq b \bar{B}^2\left(5b\right) \left(1-o(1)\right)
\end{align*}
Then, as in Case 1, due to the regenerative ratio representation
\eqref{REG-RAT-REP} and the regularly varying nature of
$\bar{B}(\cdot),$ we conclude that there exists a constant $c_2$ such
that
\begin{equation}
\Pr\left\{ W_\infty^{(1)} > b\right\} \geq \frac{0.5b \times
  b\bar{B}^2(5b)}{\E \tau_0}\left(1-o(1)\right) \geq
c_{1}b^{2}\bar{B}\left( b\right) ^{2}.  \label{LB_1b} 
\end{equation}
Combining (\ref{LB_1a}) and (\ref{LB_1b}) we obtain the statement of
Proposition \ref{Proposition_LB} for the case $\alpha > 2.$\\

\noindent
\textbf{Case 3: } We now consider the assumption that
$\alpha \in (1,2)$ and $\bar{B}\left( x\right) \sim cx^{-\alpha }$ as
$x\rightarrow \infty.$ The strategy is similar to Case 1. Define an
event, $D_3,$ satisfying the following properties:
\begin{itemize}
\item[1)] The coordinate $W_{1}^{\left( 2\right) }>6b^{\alpha}$ (that
  is, Job 0 blocks one of the servers for
  $\Omega\left( b^{\alpha}\right) $ time units).

\item[2)] The total amount of work brought by all the jobs that arrive
  in the time interval $(0,2b^\alpha]$ does not exceed $3b^\alpha.$ In
  other words, $V_1 + \ldots + V_{N_A(2b^\alpha)} \leq 3b^\alpha.$ 
%  \[ \sum_{i=1}^{N_A\left(2b^2\right)} V_i \leq 3b^2.\]

\item[3)] Every job that arrives in the time interval
  $[b^\alpha,2b^\alpha]$ experiences delay for at least $b$ units of
  time before getting processed. That is,
  \[ \min_{N_A \left(b^2 \right) \leq n \leq N_A \left( 2b^2 \right)}
  W_n^{(1)} > b.\]
\end{itemize}
\noindent
Then, following the same steps as in Case 1, we obtain that
\begin{align*}
  \E_{\bf 0}\left[ \sum_{k=1}^{\tau_0} I\left( W_k^{(1)} >
  b\right)\right] \geq \bar{B}\left( 6b^\alpha \right) \E\left[
  N_A\left(2b^\alpha\right) - N_A\left(b^\alpha\right); D_3'\right]
\end{align*}
where the event 
\[ D_3' := \left\{ N_A\left(b^\alpha\right) \geq 0.5b^\alpha, \
  N_A\left( 2b^\alpha \right) \in \left[
    1.5b^\alpha,2.5b^\alpha\right], \min_{0.5b^\alpha \leq n \leq
    2.5b^\alpha} S_n > b, \ \sum_{i=1}^{\lceil 2.5b^\alpha \rceil} V_i
  \leq 3b^\alpha \right\}\]
has non-vanishing probability as $b \rightarrow \infty$ because
$b^{-1}S_{\left[ tb^{\alpha }\right] }$ converges weakly in
$ D[0,\infty $), to a Stable process $Z\left( \cdot \right).$ As a
result, we obtain
\begin{align*}
  \E_{\bf 0}\left[ \sum_{k=1}^{\tau_0} I\left( W_k^{(1)} >
  b\right)\right] \geq \bar{B}\left( 6b^\alpha \right) \times
  \Pr\left\{ \inf_{1 \leq t \leq 3} Z(t) > 1 \right\} \left(1-o(1)\right).
\end{align*}
This observation, along with the regenerative ratio representation
\eqref{REG-RAT-REP}, concludes the proof of Proposition
\ref{Proposition_LB}.

\section{Proof of upper bound}
\label{SEC-UB1}
\noindent The objective of this section is to prove the following
proposition.

\begin{proposition}
  Suppose that Assumption \ref{ASSUMP_LEFT_TAIL} holds, and that
  $\rho =1$.  Then, if Assumption \ref{ASSUMP-DIST-V} holds with
  $\alpha >2$, there exist $c_{1} > 0$ and $b_{0} > 0$ such that for
  all $b>b_{0},$
\begin{equation*}
\mathbb{P}\left\{ W_{\infty }^{(1)}>b\right\} \leq c_{1}\left( b^{2}\bar{B}%
\xspace(b^{2})+b^{2}\bar{B}\xspace^{2}(b)\right) .
\end{equation*}
On the other hand, if $\bar{B}\left( x\right) \sim cx^{-\alpha },$ as
$ x \rightarrow \infty,$ for some $c>0$ and
$\alpha \in \left( 1,2\right) $, then one can find positive constants
$c_{1}$ and $b_{0}$ such that for all $b>b_{0},$
\begin{equation*}
  \mathbb{P}\left\{ W_{\infty }^{(1)}>b\right\} \leq c_{1}\left( b^{\alpha }
    \bar{B}\xspace(b^{\alpha })\right) .
\end{equation*}
\label{PROP-UB}
\end{proposition}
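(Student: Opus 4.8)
The plan is to establish matching upper bounds via the regenerative ratio representation \eqref{REG-RAT-REP}, so it suffices to produce an upper bound of the right order for the numerator $\E_{\bf 0}[\sum_{k=0}^{\tau_0-1} I(W_k^{(1)} > b)]$ (the denominator $\E_{\bf 0}(\tau_0)$ is a fixed positive constant). Following the intuition in Section \ref{SEC-RESULT-INTUIT-QUEUES}, I would decompose a busy period according to the ``cause'' of a large delay: the size and number of the big jobs arriving in it. Concretely, on the event $\{\tau_b^{(1)} < \tau_0\}$ one decomposes according to whether the busy period contains (a) one job of size exceeding some threshold of order $b^2$ (resp.\ $b^\alpha$ in Case 2), (b) two jobs each of size exceeding $\epsilon b$, or (c) neither. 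The first obstacle is handling the ``neither'' case, where no single job is large: here I would use a Lyapunov/drift argument — dominating $W^{(1)}$ during the busy period by (the maximum of) an appropriately truncated random walk with negative drift $2-\rho < 0$ once both servers are active, and showing via an exponential Lyapunov bound (the ``Lyapunov bound technique'' advertised in the Introduction) that the contribution of busy periods with only moderate jobs is negligible compared to $b^2\bar B(b^2) + b^2\bar B^2(b)$.

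For the two-jumps contribution (b), I would bound it from above by summing over the (a.s.\ finitely many in a busy period) pairs of indices $(i,j)$ with $V_i, V_j > \epsilon b$. After the first big job arrives, the blocked server carries a residual workload of order at most $V_i$, so it stays blocked for at most $O(V_i)$ steps; during this window the active server behaves like a critically loaded single-server queue whose workload is $O(\sqrt{k})$ after $k$ steps (order $O(k^{1/\alpha})$ in Case 2). A second job of size $> \epsilon b$ arriving within this window of length $O(V_i)$ can then, together with the residual workload, delay at most $O(V_i)$ further jobs. Taking expectations, the number of delayed jobs contributed is $\E[V_i I(V_i > \epsilon b)\cdot \#\{j: V_j > \epsilon b \text{ in the window}\}] = O(b\,\bar B(\epsilon b)) \cdot O(b\,\bar B(\epsilon b))$ after using $\E[V I(V>\epsilon b)] = O(b\bar B(b))$ (valid since $\alpha > 1$) and a union bound over the second job, giving the desired $O(b^2 \bar B^2(b))$; regular variation lets us replace $\bar B(\epsilon b)$ by a constant multiple of $\bar B(b)$.

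For the one-jump contribution (a), the single job of size $V > \delta b^2$ blocks one server for $\approx V$ units of time, during which the active server accumulates workload of order $O(\sqrt{V})$; the number of jobs delayed by more than $b$ is then of order $O(V)$ (once the workload exceeds $b$, which requires $V \gtrsim b^2$, each subsequent unit of drained workload corresponds to $O(1)$ delayed jobs). Hence this contribution is $O(\E[V\, I(V > \delta b^2)]) = O(b^2 \bar B(b^2))$, again by the tail-integral estimate for regularly varying $V$ with $\alpha > 1$; in Case 2 the threshold is $b^\alpha$, the fluctuation order is $k^{1/\alpha}$, and the same computation yields $O(b^\alpha \bar B(b^\alpha))$, which dominates $b^2\bar B^2(b) = O(b^{2-2\alpha}L(b)^2)$ since $2\alpha - 2 > \alpha^2 - \alpha$ when $\alpha \in (1,2)$. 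The main difficulty throughout is making the heuristic ``the active server is a single-server queue'' rigorous: one must carefully dominate $W^{(1)}_{\tau_b^{(2)}+k}$ by a genuine random walk maximum (using the Kiefer–Wolfowitz recursions \eqref{MIN-REC}–\eqref{MAX-REC} and the fact that $X = V - T$ has negative-in-expectation moderate truncation), control the probability that the two (or the one huge) jump windows overlap or cascade, and show that three-or-more-jump configurations and atypically long busy periods are negligible — this is where the Lyapunov bound does the heavy lifting.
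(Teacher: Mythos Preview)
Your overall strategy (regenerative ratio representation, then decompose the numerator) matches the paper, but the decomposition you propose and the tools you invoke differ in an important way, and your case (c) contains a genuine gap.

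The paper does \emph{not} split according to the number and size of big jobs. Instead it fixes $0<\delta_-<\delta<\delta_+<1$, lets $\bar\tau_{b\delta_+}^{(2)}=\inf\{n\ge\tau_{b\delta_-}^{(2)}:W_n^{(2)}\le b\delta_+\}$, and splits the numerator as $B_1(b)+B_2(b)$ according to whether $\bar\tau_{b\delta_+}^{(2)}>\tau_{b\delta}^{(1)}$ or not. For $B_1(b)$ it uses the stochastic domination $W_{\tau_b^{(2)}}^{(2)}\,|\,\tau_b^{(2)}<\tau_0 \preceq X+b\,|\,X>b$ to reduce to a single expectation $B_3(b)$ involving $\max_{n\le N_A(X)+1}|S_n|$, and then applies Nagaev's \emph{uniform} large deviations estimate $\Pr\{\max_{k\le m}|S_k|>x\}\le 3(\Pr\{\sigma|B|_\ast>x/\sqrt m\}+m\Pr\{|X|>x\})$. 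The Brownian part of this estimate, integrated over the first-jump size, is exactly what produces the $b^2\bar B(b^2)$ term. For $B_2(b)$ the paper uses a Lyapunov bound (with a tail-integral, not exponential, Lyapunov function) to get $\Pr_{\bf w}\{\tau_b^{(2)}<\tau_0\}\le C(w_2+1)\bar B(b)$ uniformly in $w_2<b\delta_+$.

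Your case (c) is where the plan breaks down. With $\rho=1$, once one server is blocked the active server is a \emph{critically loaded} single-server queue: the drift of $W^{(1)}$ is zero, not $2-\rho$ (that is the drift of the \emph{total} workload, not of $W^{(1)}$), so no negative-drift Lyapunov argument applies to $W^{(1)}$; and since $X=V-T$ is regularly varying, exponential Lyapunov functions are unavailable anyway. More substantively, your case (c) contains the scenario ``exactly one job of size $V\in[\epsilon b,\delta b^2]$ and no other job $>\epsilon b$'': the server is then blocked for $m\approx V$ steps, and the truncated zero-mean walk exceeds $b$ with probability $\approx\Pr\{\sigma B(1)>b/\sqrt m\}$. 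Integrating $m\cdot\exp(-cb^2/m)$ against $d\bar B(m)$ over $[\epsilon b,\delta b^2]$ gives a contribution of order $b^2\bar B(b^2)$ (this is precisely Lemma~\ref{LEM-BSQ-SIMP-TERM} in the paper), which is one of the two leading terms, not negligible. In short, the $b^2\bar B(b^2)$ contribution does \emph{not} come only from a single job exceeding $b^2$; it also comes from a job of intermediate size $\Theta(b)$ to $\Theta(b^2)$ followed by a Gaussian-type fluctuation of the critical walk, and your decomposition places this in the ``negligible'' bucket. The fix is to use a uniform large deviations bound of Nagaev type that handles the CLT and one-jump regimes simultaneously, as the paper does.
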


The rest of this section is devoted to the proof of Proposition
\ref{PROP-UB}.  First, pick $\delta _{-}, \delta, \delta _{+}$ such
that $0<\delta _{-} < \delta < \delta _{+}<1.$ In addition to the
stopping times
\begin{equation*}
  \tau _{x}^{\left( i\right) }=\inf \left\{n\geq 0:W_{n}^{\left( i\right)
    }>x \right\}, 
\end{equation*}
which are defined for $x > 0, i=1 \text{ and } 2,$ let us define
\begin{equation*}
  \bar{\tau}_{b\delta _{+}}^{\left( 2\right) }=\inf \left\{n\geq \tau _{b\delta
      _{-}}^{\left( 2\right) }:W_{n}^{\left( 2\right) }\leq b\delta _{+}\right\}.
\end{equation*}
Additionally, let
\begin{align*}
  B_{1}\left( b\right) &:=\mathbb{E}_{\bf 0}\left[ \sum_{k=0}^{\tau
                         _{0}-1}I\left( W_{k}^{\left( 1\right) }>b\right) I\left( \bar{\tau}_{b\delta
                         _{+}}^{\left( 2\right) } > \tau _{b\delta }^{\left( 1\right) }\right)
                         \right] \text{ and }\\
  B_{2}\left( b\right) &:=\mathbb{E}_{\bf 0}\left[ \sum_{k=0}^{\tau
                         _{0}-1}I\left( W_{k}^{\left( 1\right)}>b\right) I\left( \bar{\tau}_{b\delta
                         _{+}}^{\left( 2\right) } \leq \tau _{b\delta}^{\left( 1\right) }\right) \right].
\end{align*}
Then, it follows from the regenerative ratio representation
\eqref{REG-RAT-REP} that
\begin{align}
  \Pr \left\{ W_{\infty}^{(1)} > b \right\} = \frac{B_1(b) + B_2(b)}{\E_{\bf
  0}[\tau_0]}.
  \label{RR-INTERMSOF_B}
\end{align}
The term $B_1(b)$ corresponds to the case where all the actions
happen: once there is a large jump in $W^{(2)}$ which takes it beyond
$b \delta_-,$ one of the servers gets blocked for a long time, and the
other server which faces the entire traffic in that duration piles up
work more than $b \delta$. On the other hand, the term $B_{2}( b) $
corresponds to the case where the first jump is wasted: that is, there
is not enough buildup in $W^{(1)}$ after the occurrence of first jump
in $W^{(2)}.$ The rigorous procedure of obtaining upper bounds for
$B_1(b)$ and $B_2(b)$ is divided into several parts:

\bigskip

Part 1) First, we obtain an upper bound for $\E_{\bf w} [\tau_0]$
uniformly over all initial conditions ${\bf w} = (w_1,w_2).$ This
shall be useful in obtaining upper bounds for both $B_1(b)$ and
$B_2(b)$ because of the simple observation that
\[ \E_{\bf w}\left[ \sum_{k=0}^{\tau_0-1} I \left( W_k^{(1)} >
    b\right)\right] \leq \E_{\bf w}[\tau_0].\]
Additionally, in an attempt to obtain a stochastic description of the
workload $W^{(2)}$ after it exceeds $\delta b_-,$ we derive a
stochastic domination result for $W_{\tau_{b}^{(2)}}^{(2)}$ which
shall be useful.

\bigskip

Part 2) We reduce the contribution of the first term $B_1(b)$ into a
large deviations problem for zero-mean random walks with regularly
varying increments. We use the stochastic domination result obtained
in Part 1) along with another domination argument, in terms of a
suitably defined critically loaded single-server queue, to account for
all of what happens after the first jump. In turn, the introduction of
the single-server queue sets the stage for the use of uniform large
deviations for random walks. The analysis of part 2) emphasizes the
convenience of partitioning the numerator in \eqref{REG-RAT-REP} into
$B_1(b)$ and $B_2(b).$

\bigskip 

Part 3.a)\ This is the portion of the argument that requires
$\alpha >2$. It invokes classical results for uniform large deviations
of regularly varying random walks available due to Nagaev (uniform in
the sense that the asymptotics jointly account both the Brownian
approximations in the CLT scaling regime and the large deviations
approximations in scaling regimes beyond that of CLT). The execution
of part 2) involves routine estimations of one dimensional integrals
using basic properties of regularly varying distributions. We obtain
the required upper bound for $B_1(b)$ after some elementary
simplifications.

\bigskip

Part 3.b) The analysis here is entirely parallel to that of part 3.a),
except that the uniform estimates involve an approximation using an
$\alpha$-stable process (instead of Brownian motion as in Part 3.a)).

\bigskip

Part 4) is devoted to obtaining an upper bound for the residual term
$B_2(b).$ This is accomplished by first performing calculations that
result in an intermediate bound for $B_2(b)$ in terms of expected
number of jobs that wait for duration longer than $b$ after the first
jump. The second calculation involves obtaining a good upper bound for
$\Pr_{\bf w} \{\tau_b^{(2)} < \tau_0\}$ uniformly over initial
conditions ${\bf w} \in \{ (w_1,w_2): w_2 < b\delta_+\}.$

In the following subsections we shall estimate the contributions of
$B_1(b)$ and $B_2(b)$ following the outline presented above. In order
to streamline the presentation, we present proofs of some of the
results in the appendix.

\subsection*
{Part 1) Some useful upper bounds} 
Recall our earlier definition $X := V - T.$ As mentioned previously,
the goal of this subsection is to provide some generic bounds which
will be useful in deriving upper bounds for both $B_1(b)$ and
$B_2(b).$
\begin{lemma}
  \label{Lem_LB_ET0}
  Suppose that $\rho = 1.$ Then there exist positive constants $C_{1}$
  and $C_{0}$ such that for all ${\bf w} =(w_1,w_2)$ satisfying
  $0 \leq w_1 \leq w_2,$
\begin{equation*}
  \E_{\bf w}\left[ \tau _{0}\right] \leq C_{1}w_{2}+C_{0}.
\end{equation*}
\end{lemma}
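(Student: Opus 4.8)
The plan is to run a Lyapunov (negative–drift) argument on the total workload $U_n:=W_n^{(1)}+W_n^{(2)}$ to push the Kiefer--Wolfowitz chain down into a bounded strip $\{w_2\le C\}$, and then close the estimate with a short regeneration argument inside that strip. The place where $\rho=1$ enters is the drift of $U_n$.

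\textbf{Step 1 (drift of the total workload).} Since the recursions \eqref{MIN-REC}--\eqref{MAX-REC} only reorder the two coordinates, $W_{n+1}^{(1)}+W_{n+1}^{(2)}=(W_n^{(1)}+V_n-T_{n+1})^++(W_n^{(2)}-T_{n+1})^+$. Writing $x^+=x+(-x)^+$ in each summand and taking conditional expectations gives the exact identity
\[
\E_{\bf w}[U_1]-(w_1+w_2)=\E[V-2T]+\E[(T-w_1-V)^+]+\E[(T-w_2)^+].
\]
With $\E T=1$ and $\rho=1$ one has $\E[V-2T]=-1$. The first correction term is at most $\E[(T-V)^+]=1-\E[T\wedge V]<1$, because $\E[T\wedge V]>0$ (only $\Pr\{T>0\}>0$ is needed, which holds as $\E T=1$); the second is at most $\E[(T-C)^+]$ whenever $w_2\ge C$, and $\E[(T-C)^+]\to0$ as $C\to\infty$ since $\E T<\infty$. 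Hence I can fix $C>0$ and $\varepsilon>0$ so that $\E_{\bf w}[U_1]-(w_1+w_2)\le-\varepsilon$ for every ${\bf w}=(w_1,w_2)$ with $w_2\ge C$.

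\textbf{Step 2 (drift down to the strip).} Put $g({\bf w}):=(w_1+w_2)/\varepsilon\ (\ge0)$ and $\sigma_C:=\inf\{n\ge1:W_n^{(2)}\le C\}$. The increments of $U_n$ are integrable ($|U_{n+1}-U_n|\le V_n+2T_{n+1}$), so by Step 1 the process $g({\bf W}_{n\wedge\sigma_C})+(n\wedge\sigma_C)$ is a supermartingale; optional stopping and monotone convergence give $\E_{\bf w}[\sigma_C]\le g({\bf w})\le2w_2/\varepsilon$ whenever $w_2>C$.

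\textbf{Step 3 (bounded expected time to regenerate from the strip).} Let $V^\star:=\sup_{w_2\le C}\E_{\bf w}[\tau_0]$. Using that $T$ has unbounded support, pick $M_0$ with $\Pr\{V\le M_0\}\ge\tfrac12$; then, for any ${\bf w}$ with $w_1\le w_2\le C$, the event $\{V_0\le M_0,\,T_1>C+M_0\}$ forces ${\bf W}_1=(0,0)$, so $\Pr_{\bf w}\{W_1^{(2)}=0\}\ge q_0:=\tfrac12\Pr\{T>C+M_0\}>0$ on the strip. Conditioning on the first step: on $\{W_1^{(2)}=0\}$ we are done; on $\{0<W_1^{(2)}\le C\}$ the residual expected cost is $\le V^\star$; on $\{W_1^{(2)}>C\}$ it is $\le\E_{{\bf W}_1}[\sigma_C]+V^\star\le2W_1^{(2)}/\varepsilon+V^\star$ by Step 2 and the strong Markov property, and $W_1^{(2)}\le\max(w_1+V_0,w_2)\le C+V_0$ has expectation $\le C+1$. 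This yields $V^\star\le1+(1-q_0)V^\star+\tfrac2\varepsilon(C+1)$, hence $V^\star<\infty$; to avoid circularity one first runs this with $\tau_0$ replaced by $\tau_0\wedge m$, bounds $\sup_{w_2\le C}\E_{\bf w}[\tau_0\wedge m]$ uniformly in $m$, and lets $m\to\infty$.

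\textbf{Step 4 (conclusion), and the main obstacle.} For $w_2>C$, decompose $\tau_0\le\sigma_C+(\text{hitting time of }(0,0)\text{ for the chain restarted at }{\bf W}_{\sigma_C})$, which by the strong Markov property together with Steps 2--3 gives $\E_{\bf w}[\tau_0]\le\E_{\bf w}[\sigma_C]+V^\star\le2w_2/\varepsilon+V^\star$; for $w_2\le C$, $\E_{\bf w}[\tau_0]\le V^\star$. Thus the lemma holds with $C_1=2/\varepsilon$ and $C_0=V^\star$. I expect Step 3 to be the delicate point: the strip $\{w_2\le C\}$ is itself bounded (since $w_1\le w_2\le C$), but a single step can carry the chain to arbitrarily large $w_2$, so naive geometric-trial reasoning fails and one must feed back the linear return bound of Step 2, while handling the a priori finiteness of $V^\star$ via truncation. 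The drift computation in Step 1 is elementary, but it should be written through the decomposition $x^+=x+(-x)^+$, which is what exposes why $\rho=1$ upgrades the na\"ive drift $-1$ into a genuinely negative drift once $w_2$ is large.
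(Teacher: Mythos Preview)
Your proof is correct and follows essentially the same Lyapunov route as the paper: negative drift of $W_n^{(1)}+W_n^{(2)}$ off the strip $\{w_2\le C\}$, a linear bound on the hitting time of the strip, and then a uniform bound on the regeneration time from within the strip. The paper compresses your Steps~2--3 into a citation of Meyn--Tweedie (Theorem~11.3.4) plus a one-line ``geometric trials'' remark, whereas you spell out the excursion argument and the truncation needed to avoid circularity; your lower bound $q_0=\tfrac12\Pr\{T>C+M_0\}$ in Step~3 is in fact more careful than the paper's claimed bound $\Pr\{T>C\}$, which overlooks the constraint $T\ge w_1+V$.
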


\begin{remark}
  \textnormal{The conclusion of Lemma \ref{Lem_LB_ET0} holds true for
    every $\rho < 2.$ Our proof for Lemma \ref{Lem_LB_ET0} can be
    easily modified to accommodate every $\rho < 2.$}
\end{remark}

\begin{lemma}
\label{LEM_ST_DOM}
For every $x\geq b$ and ${\bf w}=\left( w_{1},w_{2}\right)$ with
$0 \leq w_1 \leq w_{2}<b,$
\begin{equation*}
  \Pr_{\bf w} \left\{ W_{\tau _{b}^{\left( 2\right) }}^{\left(
        2\right) }>x\ \left \vert \frac{}{} \right. \ \tau_{b}^{\left(
        2\right) }<\tau _{0} 
  \right\}\leq \Pr\left\{ X + b > x \ |\ X>b\right\}. 
\end{equation*}
In other words, $W_{\tau _{b}^{\left( 2\right) }}^{\left(2\right)}$
given $\tau _{b}^{\left( 2\right) }<\tau _{0}$ is stochastically
dominated by $X+b$ given $X > b.$
\end{lemma}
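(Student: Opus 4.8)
The plan is to track how the workload coordinate $W^{(2)}$ can first exceed level $b$ before the process returns to $(0,0)$, and to argue that at that instant the overshoot is controlled by a single increment $X = V - T$ conditioned to be large. First I would observe that, starting from ${\bf w} = (w_1, w_2)$ with $0 \le w_1 \le w_2 < b$, on the event $\{\tau_b^{(2)} < \tau_0\}$ the coordinate $W^{(2)}$ is strictly below $b$ at time $\tau_b^{(2)} - 1$ and strictly above $b$ at time $\tau_b^{(2)}$. Writing $n = \tau_b^{(2)}$ and using the Kiefer--Wolfowitz recursion \eqref{MAX-REC}, we have $W_n^{(2)} = \left(W_{n-1}^{(1)} + V_{n-1} - T_n\right)^+ \vee \left(W_{n-1}^{(2)} - T_n\right)^+$. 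Since $W_{n-1}^{(2)} - T_n \le W_{n-1}^{(2)} < b$, the maximum must be attained by the first term, so $W_n^{(2)} = \left(W_{n-1}^{(1)} + V_{n-1} - T_n\right)^+ = W_{n-1}^{(1)} + X_n$ where $X_n = V_{n-1} - T_n$ is a fresh copy of $X$, independent of $\mathcal{F}_{n-1}$ (and hence independent of the event that $\tau_b^{(2)} = n$, which is $\mathcal{F}_{n-1}$-measurable given the value of $X_n$ only through whether the jump is large enough). The key structural point is that $W_{n-1}^{(1)} \le W_{n-1}^{(2)} < b$ by the ordering invariant maintained by \eqref{MIN-REC}--\eqref{MAX-REC}, so $W_n^{(2)} = W_{n-1}^{(1)} + X_n < b + X_n$, and moreover the jump at that step satisfies $X_n > W_n^{(2)} - W_{n-1}^{(1)} > W_n^{(2)} - b \ge 0$; in particular on the event of interest $X_n > b - W_{n-1}^{(1)} \ge 0$, but we want the cleaner bound $X_n > b$, which I will need to extract.

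The crossing happens because $W^{(2)}$ jumps from below $b$ to above $b$; since $W^{(2)}$ can only increase by at most the incoming increment, and since the level before the jump in the relevant term is $W_{n-1}^{(1)} \ge 0$, actually the cleanest route is: condition on $\tau_b^{(2)} = n$ and on $\mathcal{F}_{n-1}$; then $W_n^{(2)} = W_{n-1}^{(1)} + X_n$ with $W_{n-1}^{(1)} \in [0, b)$ deterministic given $\mathcal{F}_{n-1}$, and the event $\{W_n^{(2)} > b\}$ is $\{X_n > b - W_{n-1}^{(1)}\}$. Thus, conditionally,
\[
\Pr_{\bf w}\!\left\{ W_n^{(2)} > x \,\middle|\, \mathcal{F}_{n-1}, \tau_b^{(2)} = n \right\} = \frac{\Pr\{X > x - W_{n-1}^{(1)}\}}{\Pr\{X > b - W_{n-1}^{(1)}\}}.
\]
Now I would use the standard fact that for a random variable $X$ whose right tail is (at worst) such that $\Pr\{X > u\}$ has a new-worse-than-used type comparison — more precisely, because $\Pr\{X > x\} \sim \bar B(x)$ is regularly varying and in particular long-tailed, one has the monotonicity/domination $\Pr\{X > x - a\}/\Pr\{X > b - a\} \le \Pr\{X > x - b\} \cdot (\text{something})$ — hmm, I need to be careful. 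The genuinely clean inequality is: for any $a \in [0, b)$ and $x \ge b$,
\[
\frac{\Pr\{X > x - a\}}{\Pr\{X > b - a\}} \le \frac{\Pr\{X + b > x\}}{\Pr\{X > b\}},
\]
which would follow if $u \mapsto \Pr\{X > x - b + u\}/\Pr\{X > u\}$ is nondecreasing in $u \ge 0$ — i.e. if $X$ has a tail whose "hazard-ratio" over a fixed gap is increasing, a property shared by regularly varying tails. Establishing this comparison uniformly in $a$, then averaging over the conditional distribution of $W_{n-1}^{(1)}$ and over $n$, and finally noting that unconditionally $\Pr\{X + b > x \mid X > b\}$ is exactly the right-hand side claimed, completes the argument.

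I expect the main obstacle to be pinning down the precise tail-comparison inequality in the last step and justifying it from Assumption \ref{ASSUMP-DIST-V} alone, rather than from a convenient structural assumption (like $X$ having a decreasing failure rate, which regularly varying distributions need not literally have at every point). The safe resolution is probably to prove the lemma with a constant, i.e. $\Pr_{\bf w}\{W_{\tau_b^{(2)}}^{(2)} > x \mid \tau_b^{(2)} < \tau_0\} \le C\,\Pr\{X + b > x \mid X > b\}$, which suffices for all later uses; or to invoke the long-tailedness of $\bar B$ (a consequence of regular variation) to get the asymptotic version and absorb the error. Another small technical point to handle carefully is the independence bookkeeping: the event $\{\tau_b^{(2)} = n, \tau_0 > n\}$ is not quite independent of $X_n$, since whether the crossing occurs at step $n$ depends on $X_n$; the correct statement is that $\{\tau_b^{(2)} \wedge \tau_0 \ge n\}$ and $W_{n-1}^{(1)}$ are $\mathcal{F}_{n-1}$-measurable, and one conditions on those before using the freshness of $X_n$ — this is routine but must be written in the right order to avoid circularity.
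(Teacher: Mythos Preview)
Your approach is exactly the paper's: decompose over the crossing time $n$, observe that on $\{\tau_b^{(2)}>n-1\}$ the crossing forces $W_n^{(2)}=W_{n-1}^{(1)}+X_n$, condition on $\mathcal F_{n-1}$, and bound the resulting tail ratio
\[
\frac{\bar F(x-a)}{\bar F(b-a)},\qquad a=W_{n-1}^{(1)}\in[0,b),
\]
uniformly by $\Pr\{X+b>x\mid X>b\}=\dfrac{\bar F(x-b)}{\bar F(b)}\wedge 1$.

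The only real issue is that you misjudge the difficulty of this last inequality. It requires no regular variation, no long-tailedness, no hazard-rate monotonicity, and no extra constant $C$: it is a two-line consequence of $\bar F$ being nonincreasing. Since $0\le a\le b\le x$, one has $x-a\ge x-b$ and $b-a\le b$, hence
\[
\bar F(x-a)\le \bar F(x-b)\quad\text{and}\quad \bar F(b-a)\ge \bar F(b),
\]
so $\dfrac{\bar F(x-a)}{\bar F(b-a)}\le \dfrac{\bar F(x-b)}{\bar F(b)}$; the ratio is also trivially $\le 1$ because $x-a\ge b-a$. This is precisely what the paper does, and it delivers the lemma with constant $1$, not merely up to a constant or asymptotically. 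Your proposed fallback (proving it with a constant or via long-tailed asymptotics) is unnecessary and would in fact weaken the statement.

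Your concern about the independence bookkeeping is legitimate but you already have the resolution: write $\{\tau_b^{(2)}=k,\ \tau_0>k,\ W_k^{(2)}>x\}$ as $\{\tau_b^{(2)}>k-1,\ \tau_0>k-1\}\cap\{X_k>x-W_{k-1}^{(1)}\}$ (the remaining constraints are implied once $W_k^{(2)}>x\ge b>0$), note the first event and $W_{k-1}^{(1)}$ are $\mathcal F_{k-1}$-measurable, and integrate out the fresh $X_k$. Summing over $k$ with the tail-ratio bound above and then recognising the same sum with $x$ replaced by $b$ as $\Pr_{\bf w}\{\tau_b^{(2)}<\tau_0\}$ finishes the proof.
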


If $\rho < 2,$ it is intuitive to expect the servers to effectively
drain work whenever $W^{(2)}$ is large. Lemma \ref{LEM-LYAP-ET0},
whose proof is given in Appendix \ref{APP-OTHERS}, asserts the same
when $\rho = 1.$
\begin{lemma}
  \label{LEM-LYAP-ET0}
  There exist positive constants $C$ and $\varepsilon$ such that
  \begin{equation*}
    \E_{(w_1,w_2)}\left[ W_1^{(1)} + W_1^{(2)} \right] <
    (w_1+w_2)-\varepsilon 
  \end{equation*}
  as long as $w_{2}\geq C.$
\end{lemma}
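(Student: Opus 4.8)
The plan is to observe that the two coordinates of the Kiefer--Wolfowitz vector decouple once we add them. By the elementary identity $a\wedge b + a\vee b = a+b$, recursions \eqref{MIN-REC} and \eqref{MAX-REC} give, with $V:=V_0$ and $T:=T_1$,
\[
  W_1^{(1)} + W_1^{(2)} = \bigl(w_1+V-T\bigr)^+ + \bigl(w_2-T\bigr)^+ ,
\]
so it suffices to control $\E\bigl[(w_1+V-T)^+\bigr]$ and $\E\bigl[(w_2-T)^+\bigr]$ separately; here $V$ and $T$ are independent with $\E V = \E T = 1$, which is where the hypothesis $\rho = 1$ is used. Informally, the lemma says that the \emph{total} workload $w_1+w_2$ drains at a uniform rate once the larger coordinate $W^{(2)}$ is above a threshold.

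For the first expectation I would use $(x)^+ = x + (-x)^+$ together with $\E[V-T]=0$ to write $\E[(w_1+V-T)^+] = w_1 + \E[(T-V-w_1)^+]$. Since $w_1\ge 0$ and $u\mapsto (T-V-u)^+$ is nonincreasing, $\E[(T-V-w_1)^+] \le \E[(T-V)^+] = \E T - \E[\min(T,V)] = 1-\eta$, where $\eta := \E[\min(T,V)]$. The crucial point is the \emph{strict} positivity $\eta>0$: by independence, $\Pr\{\min(T,V)>0\} = \Pr\{T>0\}\,\Pr\{V>0\}$, and both factors are positive because $\E T = \E V = 1 > 0$ rules out either variable being degenerate at $0$. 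Hence $\E[(w_1+V-T)^+] \le w_1 + 1 - \eta$ for every $w_1\ge 0$. For the second expectation, a direct computation gives $\E[(w_2-T)^+] = w_2 - 1 + \E[(T-w_2)^+]$, and integrability of $T$ forces $\E[(T-w_2)^+] \le \E\bigl[T\,;\,T>w_2\bigr] \to 0$ as $w_2\to\infty$.

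Combining the two estimates, $\E_{(w_1,w_2)}\bigl[W_1^{(1)}+W_1^{(2)}\bigr] \le (w_1+w_2) - \eta + \E[(T-w_2)^+]$. It then remains to pick $C$ large enough that $\E[(T-w_2)^+] < \eta/2$ whenever $w_2\ge C$, and to set $\varepsilon := \eta/2$. I do not anticipate a genuine obstacle; the only substantive ingredients are the strict inequality $\E[(V-T)^+]<1$ (equivalently $\eta>0$), which is exactly the drift contributed by the first coordinate, and the tail-integrability estimate $\E[(T-w_2)^+]\to 0$, which is precisely what forces the restriction $w_2\ge C$ — at the origin the one-step drift of $w_1+w_2$ equals $1-\eta>0$, so no uniform $\varepsilon$ can work there.
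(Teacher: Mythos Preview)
Your proof is correct and follows essentially the same approach as the paper's: both decompose $W_1^{(1)}+W_1^{(2)}=(w_1+V-T)^++(w_2-T)^+$, use $\E[V-T]=0$, and rely on $\E[T\,;\,T>w_2]\to 0$ to choose $C$. Your formulation is slightly cleaner --- by expressing the first-coordinate drift through $\eta=\E[\min(T,V)]$ you make the uniform $\varepsilon$ explicit, whereas the paper writes out a longer expansion of the drift and argues (somewhat tersely) that the only positive term $\E[T I(T>w_2)]$ can be made small.
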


\noindent Lemma \ref{Lem_LB_ET0} follows as a corollary of Lemma
\ref{LEM-LYAP-ET0} via a standard Lyapunov argument.

\begin{proof}[Proof of Lemma \ref{Lem_LB_ET0}]
  Let $A := \inf\{ (w_1,w_2): w_1 \leq w_2 \leq C\}$ and
  $T_A := \inf\{ n \geq 1 : W_n \in A\}.$ Additionally, let
  $V \left((w_1,w_2)\right) = {(w_1 + w_2)}/{\ve}$ for
  $0 \leq w_1 \leq w_2.$ Here $C$ and $\ve$ are chosen as in Lemma
  \ref{LEM-LYAP-ET0}. It follows from recursions \eqref{MIN-REC} and
  \eqref{MAX-REC} that
  \[ \sup_{(w_1,w_2) \in A}\E_{(w_1,w_2)} \left[ V
    \left(W_1^{(1)},W_2^{(2)}\right)\right] \leq \frac{\E \left[
      \left(C + V - T\right)^+ + \left(C - T\right)^+\right]}{\ve} =:
  C_2 < \infty.\]
  This observation, in conjunction with Lemma \ref{LEM-LYAP-ET0} and
  Theorem 11.3.4 of \cite{MR1287609}, results in
  \begin{align}
  \label{INTER-LYAP}
    \E_{(w_1,w_2)}[T_A] \leq \frac{w_1+w_2}{\epsilon} + C_2 \leq
    \frac{2}{\epsilon}w_2 + C_2
  \end{align}
  for every $0 \leq w_1 \leq w_2.$ Moreover, since
  $\inf_{{\bf w} \in A} \Pr_{\bf w}\{ W_1^{(2)} = 0\} \geq \Pr\{ T >
  C\} > 0,$
  it follows from a simple geometric trials argument that
  $\sup_{{\bf w} \in A}\E_{\bf w}[\tau_0] < \infty.$ This observation,
  along with \eqref{INTER-LYAP}, proves the claim.
\end{proof}

\begin{proof}[Proof of Lemma \ref{LEM_ST_DOM}]
Note that 
\begin{align*}
  \mathbb{P}_{\bf w}\left\{ W_{\tau _{b}^{\left( 2\right) }}^{\left( 2\right)
  }>x,\tau _{b}^{\left( 2\right) }<\tau _{0}\right\}
  =\sum_{k=1}^{\infty }\mathbb{P}_{\bf w}\left\{ W_{k}^{\left( 2\right) }>x,\tau
  _{0}>k,\tau _{b}^{\left( 2\right) }=k\right\}.
  \end{align*}
  If $\tau_b^{(2)} = k,$ it follows from recursion \eqref{MAX-REC}
  that $W_k^{(2)} = W_{k-1}^{(1)} + X_k.$ Here recall that
  $X_k = V_{k-1} - T_{k}.$ Therefore,
  \begin{align}
    \label{INTER-ST-DOM}
    \mathbb{P}_{\bf w}\left\{ W_{\tau _{b}^{\left( 2\right) }}^{\left( 2\right)
    }>x,\tau _{b}^{\left( 2\right) }<\tau _{0}\right\}
    &=\sum_{k=1}^{\infty } \Pr_{\bf w}\left\{ X_{k}>x-W_{k-1}^{\left(
      1\right) },\tau _{0}>k-1,\tau _{b}^{\left( 2\right)
      }>k-1\right\}\\
    &=\sum_{k=1}^{\infty }\mathbb{E}_{\bf w}\left[ I\left( \tau _{0}>k-1,\tau
      _{b}^{\left( 2\right) }>k-1\right) \bar{F}\left( x-W_{k-1}^{\left( 1\right)
      }\right) \right] \nonumber
    % &\quad\quad=\sum_{k=1}^{\infty }\mathbb{E}_{\bf w}\left( I\left( \tau _{0}>k-1,\tau
    %   _{b}^{\left( 2\right) }>k-1\right) \frac{\bar{F}\left( x-W_{k-1}^{\left(
    %   1\right) }\right) }{\bar{F}\left( b-W_{k-1}^{\left( 1\right)
    %   }\right) }\bar{F}\left( b-W_{k-1}^{\left( 1\right) }\right)
    %   \right).
\end{align}
Observe that $W_{k-1}^{(2)} < b$ whenever $\tau_b^{(2)} > k-1.$
Additionally, since $x$ is taken to be larger than $b,$
\[ \frac{\bar{F}\left(x - W_{k-1}^{(1)} \right)}{\bar{F}\left(b -
    W_{k-1}^{(1)} \right)} \leq \frac{\bar{F}(x-b)}{\bar{F}(b)} \wedge
1 = \Pr \left\{ X + b > x \ |\ X > b\right\}\]
on the set $\{\tau_b^{(2)} > k-1\}.$ Therefore,
\begin{align*}
  &\Pr_{\bf w}\left\{ W_{\tau _{b}^{\left( 2\right) }}^{\left( 2\right) }>x,\tau
    _{b}^{\left( 2\right) }<\tau _{0}\right\} \\
  &\quad\quad \leq \Pr \left\{ X + b > x \ |\ X > b\right\} \times
    \sum_{k=1}^{\infty }\E_{\bf w}\left[ I\left( \tau _{0}>k-1,\tau _{b}^{\left( 2\right) }>k-1\right) \bar{F}\left(
    b-W_{k-1}^{\left( 1\right) }\right) \right] \\
  &\quad\quad=\Pr\left\{ X+b>x\ |X>b\right\} \Pr_{\bf w}\left\{ \tau _{b}^{\left( 2\right) }<\tau
    _{0}\right\} ,
\end{align*}
where the last expression was obtained by letting $x=b$ in the second
line in \eqref{INTER-ST-DOM}. The last inequality is equivalent to the
statement of Lemma \ref{LEM_ST_DOM}, and this concludes the proof.
\end{proof}

\subsection*{Part 2) Reduction to a zero-mean random walk problem} 
Recall our earlier definition $X_{n} := V_{n-1}-T_{n}$ for $n \geq 1,$
where $(V_n: n \geq 1)$ are i.i.d. copies of $V$ and $(T_n : n \geq 1)$
are i.i.d. copies of $T.$ Additionally, we had set $V_0 := 0.$
Further, define $S_0 := 0, \ S_n := X_1 + \ldots + X_n,$ and
\[ N_A(t) := \sup \left\{ n \geq 0: T_1 + \ldots + T_n \leq t\right\}
\vee 0\]
for $t \geq 0.$ Here we follow the usual convention that
$\sup \emptyset= -\infty.$ Therefore, $N_A(0) = 0.$ Note that $N_A(t)$
is the number of customers that arrive in the time interval $(0,t].$
In addition to the above definitions, let $X := V-T$ and define
  \begin{equation*}
    B_{3}\left( b\right) :=\mathbb{E}\left[ I\left( \max_{0\leq n\leq N_{A}\left(
            X\right) + 1}2\left\vert S_{n}\right\vert > \left( \delta -\delta _{-}\right)
        b\right) \left( X+\max_{0\leq n\leq N_{A}\left( X\right) + 1}\left\vert
          S_{n}\right\vert \right) \ \left|\frac{}{}\right. X > b \delta_+ \right].
  \end{equation*}
Our objective in this subsection is to show the following result.
\begin{lemma}
  \label{LEM_RED_RW}
  Suppose that Assumptions \ref{ASSUMP-DIST-V} and
  \ref{ASSUMP_LEFT_TAIL} hold, and that $\rho = 1.$ 
  Then,
  \begin{equation*}
    B_{1}\left( b\right) =O\left( \Pr_{\bf 0}
      \left\{ \tau_{b \delta_+}^{(2)} < \tau_0 \right\} \times B_3(b) \right).
  \end{equation*}
\end{lemma}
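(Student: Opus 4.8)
The plan is to concentrate the mass of $B_1(b)$ on the event that a single large service time drives $W^{(2)}$ past $b\delta_+$, to peel that jump off with the strong Markov property, and then to control everything that happens afterwards by comparing the server that keeps taking fresh arrivals to a critically loaded single-server queue --- a reflected zero-mean random walk --- while the other server merely drains its big job. First I would pin down the support of $B_1(b)$: since $W_k^{(1)}\le b\delta<b$ for $k<\tau_{b\delta}^{(1)}$, the summand in $B_1(b)$ is nonzero only if $\tau_{b\delta}^{(1)}<\tau_0$ and $\tau_{b\delta}^{(1)}<\bar{\tau}_{b\delta_+}^{(2)}$; because $W^{(1)}\le W^{(2)}$ forces $\tau_{b\delta_-}^{(2)}\le\tau_{b\delta}^{(1)}$ while $\bar{\tau}_{b\delta_+}^{(2)}\ge\tau_{b\delta_-}^{(2)}$ by definition, this implies $\tau_{b\delta_-}^{(2)}<\bar{\tau}_{b\delta_+}^{(2)}$, which can happen only if $W_{\tau_{b\delta_-}^{(2)}}^{(2)}>b\delta_+$, i.e. $\tau_{b\delta_+}^{(2)}=\tau_{b\delta_-}^{(2)}<\tau_0$; and then \eqref{MIN-REC} together with the definition of $\tau_{b\delta_-}^{(2)}$ gives $W_{\tau_{b\delta_+}^{(2)}}^{(1)}\le W_{\tau_{b\delta_+}^{(2)}-1}^{(2)}=W_{\tau_{b\delta_-}^{(2)}-1}^{(2)}\le b\delta_-$. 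Thus $B_1(b)$ is carried by $\{\tau_{b\delta_+}^{(2)}<\tau_0,\ W_{\tau_{b\delta_+}^{(2)}}^{(1)}\le b\delta_-\}$.

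Next I would apply the strong Markov property at $\sigma:=\tau_{b\delta_+}^{(2)}$, so that $B_1(b)\le\E_{\bf 0}[I(\sigma<\tau_0)\,\phi({\bf W}_\sigma)]$, where for ${\bf w}=(w_1,w_2)$ with $w_1\le b\delta_-<b\delta_+<w_2$, $\phi({\bf w})$ is the expected number of customers, started from ${\bf w}$, with $W^{(1)}>b$ on the event that $W^{(1)}$ reaches $b\delta$ before $W^{(2)}$ falls back to $b\delta_+$. To bound $\phi$, I would split that count at the first time $W^{(1)}$ exceeds $b\delta$ (there are no earlier terms); by the strong Markov property and Lemma \ref{Lem_LB_ET0}, the expected count from then on, conditionally on the past, is at most $C_1W^{(2)}_{\cdot}+C_0$ with $W^{(2)}_{\cdot}$ evaluated at that hitting time. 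The crucial observation is that \eqref{MIN-REC} gives $W_k^{(1)}\le(W_{k-1}^{(1)}+X_k)^+$ for every $k$, so the active server is dominated by the zero-mean Lindley walk started at $w_1\le b\delta_-$, whence $W_k^{(1)}\le b\delta_-+2\max_{0\le j\le k}|S_j|$. Consequently $W^{(1)}$ can reach $b\delta$ only once $2\max_j|S_j|>(\delta-\delta_-)b$; and since $W^{(2)}$ returns to $b\delta_+$ within roughly $N_A(w_2-b\delta_+)+1$ arrivals, the relevant horizon for that maximum is $0\le j\le N_A(w_2-b\delta_+)+1$, and $W^{(2)}$ at the hitting time is $O(w_2+\max_{0\le j\le N_A(w_2-b\delta_+)+1}|S_j|)$. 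Putting these together gives $\phi({\bf w})\le\Psi(w_2)$ for the deterministic nondecreasing function $\Psi(v):=c\,\E[(v+\max_{0\le j\le N_A(v-b\delta_+)+1}|S_j|)\,I(2\max_{0\le j\le N_A(v-b\delta_+)+1}|S_j|>(\delta-\delta_-)b)]$ and a fixed constant $c$.

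Finally, I would substitute this back and invoke Lemma \ref{LEM_ST_DOM}: since $\Psi$ and $N_A$ are nondecreasing, the domination of $W^{(2)}_\sigma$ (given $\sigma<\tau_0$) by $X+b\delta_+$ (given $X>b\delta_+$) yields $B_1(b)\le\Pr_{\bf 0}\{\sigma<\tau_0\}\,\E[\Psi(X+b\delta_+)\mid X>b\delta_+]$. Using $N_A((X+b\delta_+)-b\delta_+)=N_A(X)$ and $v+\max_j|S_j|=O(X+\max_j|S_j|)$ at $v=X+b\delta_+$ when $X>b\delta_+$, the right-hand side is $O(\Pr_{\bf 0}\{\tau_{b\delta_+}^{(2)}<\tau_0\}\,B_3(b))$, which is the claim.

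The hard part will be making the reduction to the zero-mean walk rigorous: one must verify that, once $W^{(2)}$ exceeds $b\delta_+$, the Kiefer--Wolfowitz recursions \eqref{MIN-REC}--\eqref{MAX-REC} genuinely decouple into ``one server drains while the other runs as a single-server queue'', and in particular one must control the length $\bar{\tau}_{b\delta_+}^{(2)}$ of the blocked period --- hence the horizon in the maximum over the $|S_j|$ --- in terms of $N_A$ of the stochastically dominated jump size. The delicate configuration is a second large job arriving before the first blocked period ends: it disturbs the draining of $W^{(2)}$ and can lengthen that period, and one has to argue that such configurations either contribute negligibly or are already absorbed by the crude bound $W_k^{(1)}\le b\delta_-+2\max_{0\le j\le k}|S_j|$ and by the prefactor $X+\max_j|S_j|$ in $B_3(b)$. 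The remaining bookkeeping --- turning the count over the blocked period into the $N_A(X)$-indexed maximum and the factor $X+\max_j|S_j|$, and verifying the monotonicity needed for Lemma \ref{LEM_ST_DOM} --- is routine but needs some care with renewal estimates and the regular variation of the tail of $X$.
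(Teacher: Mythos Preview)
Your proposal is correct and follows essentially the same route as the paper: strong Markov at $\sigma=\tau_{b\delta_+}^{(2)}$, Lemma~\ref{Lem_LB_ET0} to replace the count by $C_1W^{(2)}_{\tau_{b\delta}^{(1)}}+C_0$, the reflected-walk bound $W_k^{(i)}\le 2\max_{j\le k}|S_j|+w_i$ (recorded in the paper as Lemma~\ref{LEM_SINGLE_Q_DOM}), the horizon bound $\tau_{b\delta}^{(1)}\le N_A(\xi_2-b\delta_+)+1$, and then Lemma~\ref{LEM_ST_DOM} for the overshoot of $\xi_2$. Your closing worry is largely misplaced: the conditioning event $\{\bar\tau_{b\delta_+}^{(2)}>\tau_{b\delta}^{(1)}\}$ forces $W_n^{(1)}\le b\delta<b\delta_+<W_n^{(2)}$ for every $n<\tau_{b\delta}^{(1)}$, so the Kiefer--Wolfowitz recursions decouple there (a role swap at such a step would give $W_n^{(1)}=(W_{n-1}^{(2)}-T_n)^+\le b\delta$ and hence $T_n>b(\delta_+-\delta)$), and the paper simply reads off the horizon bound from $W^{(2)}_n=\xi_2-(T_1+\cdots+T_n)>b\delta_+$ --- your phrasing ``$W^{(2)}$ returns to $b\delta_+$ within $N_A(w_2-b\delta_+)+1$ arrivals'' is its contrapositive, and no separate second-jump analysis is required.
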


\noindent Let $\mathcal{F}_n$ denote the $\sigma-$algebra generated by
the random variables $V_k$ and $T_k, k \leq n.$ Then 
\begin{align*}
  B_1(b)  &= \E_{\bf 0} \left[ I\left( \bar{\tau}_{b\delta_+}^{(2)} >
            \tau_{b \delta}^{(1)}\right) \E_{\bf 0} \left[ \sum_{k=0}^{\tau_0-1}
            I\left( W_k^{(1)} >b\right)  \ \left| \frac{}{}
            \right. \mathcal{F}_{\tau_{b\delta}^{(1)}}\right] \right].
\end{align*}
Since $W_k^{(1)}$ is smaller than $b$ for
$k < \tau_{b \delta}^{(1)},$ on the set
$\{\tau_{b}^{(1)} < \tau_0\},$ we have
\[ \E_{\bf 0} \left[ \sum_{k=0}^{\tau_0-1} I\left( W_k^{(1)} >b\right)
  \ \left| \frac{}{} \right. \mathcal{F}_{\tau_{b\delta}^{(1)}}\right]
= \E_{{\bf W}_{\tau_{b \delta}^{(1)}}} \left[ \sum_{k=0}^{\tau_0-1}
  I\left( W_k^{(1)} >b\right) \right] \leq \E_{{\bf
           W}_{\tau_{b \delta}^{(1)}}}  \left[ \tau_0 \right]. \]
Then, due to Lemma \ref{Lem_LB_ET0},
\begin{align*}
  B_1(b)   &\leq \E_{\bf 0} \left[ I\left(\bar{\tau}_{b\delta_+}^{(2)}
             > \tau_{b \delta}^{(1)}, \tau_{b}^{(1)} < \tau_0 \right) \left( C_1 
             W_{\tau_{b \delta }^{(1)}}^{(2)} + C_0 \right) \right].
\end{align*}
\noindent First, observe that whenever
$\bar{\tau}_{b\delta_+}^{(2)} > \tau_{b \delta}^{(1)},$ we must also
have that $\tau_{b\delta_+}^{(2)} = \tau_{b\delta_{-}}^{(2)}.$
Otherwise, from the definition of $\bar{\tau}_{b\delta_+}^{(2)},$ it
follows that $\bar{\tau}_{b\delta_+}^{(2)} = \tau_{b\delta_{-}}^{(2)}$
which in turn occurs earlier than $\tau_{b\delta}^{(1)},$ and this
contradicts our blanket assumption
$\bar{\tau}_{b\delta_+}^{(2)} > \tau_{b \delta}^{(1)}.$ Therefore,
\begin{align*}
  B_1(b)   &\leq \E_{\bf 0} \left[ I\left(\bar{\tau}_{b\delta_+}^{(2)}
             > \tau_{b \delta}^{(1)}, \tau_{b\delta_+}^{(2)} =
             \tau_{b\delta_{-}}^{(2)}, \tau_{b}^{(1)} < \tau_0 \right) \left( C_1 
             W_{\tau_{b \delta }^{(1)}}^{(2)} + C_0 \right) \right]\\
           &\leq \E_{\bf 0} \left[ I\left(\tau_{b\delta_+}^{(2)} \leq 
             \tau_{b\delta_{-}}^{(1)} \wedge \tau_0 \right) \E_{\bf 0}
             \left[\left( C_1 
             W_{\tau_{b \delta }^{(1)}}^{(2)} + C_0 \right) I\left(
             \bar{\tau}_{b\delta_+}^{(2)} 
             > \tau_{b \delta}^{(1)} \right) \ \left| \frac{}{}
             \right. \mathcal{F}_{\tau_{b\delta_{+}}^{(2)}}\right]
             \right]. 
\end{align*}
As a consequence of strong Markov property of ${\bf W},$ we have that 
\begin{align*}
  B_1(b) \leq \E_{\bf 0} \left[ I\left(\tau_{b\delta_+}^{(2)} \leq 
  \tau_{b\delta_{-}}^{(1)} \wedge \tau_0 \right) \E_{{\bf W}_{\tau_{b
  \delta_+}^{(2)}}}  \left[\left( C_1 
  W_{\tau_{b \delta }^{(1)}}^{(2)} + C_0 \right) I\left(
  \bar{\tau}_{b\delta_+}^{(2)} 
  > \tau_{b \delta}^{(1)} \right) \right]  \right]
\end{align*}
If we set
$\xi_1 := W_{\tau_{b \delta_+}^{(2)}}^{(1)} \text{ and } \xi_2 := W_{\tau_{b
    \delta_+}^{(2)}}^{(2)},$ again due to the Markov property of ${\bf W},$
\begin{align}
  \label{INTER-B1}
  B_1(b) \leq \E \left[ I( \xi_1 < b\delta_-)
  \E_{(\xi_1,\xi_2)}\left[ \left( C_1 
  W_{\tau_{b \delta}^{(1)}}^{(2)} + C_0 \right) I\left(
  \bar{\tau}_{b\delta_+}^{(2)} 
  > \tau_{b \delta}^{(1)} \right) \right]\right]  \Pr_{\bf 0} \left\{
  \tau_{b\delta_{+}}^{(2)} < \tau_0 \right\},
\end{align}
where $\xi_2,$ by definition, is larger than $b\delta_+.$
\subsection*{Evaluation of the inner expectation}
We analyse the inner expectation
\begin{align*}
  \chi \left(\xi_1,\xi_2 \right) := \E_{(\xi_1,\xi_2)}\left[ \left( C_1 
  W_{\tau_{b \delta}^{(1)}}^{(2)} + C_0 \right) I\left(
  \bar{\tau}_{b\delta_+}^{(2)} 
  > \tau_{b \delta}^{(1)} \right) \right]
\end{align*}
 in \eqref{INTER-B1} by restarting the
queuing system with initial conditions ${\bf W}_0 = (\xi_1,\xi_2).$
Whenever $\bar{\tau}_{b\delta_+}^{(2)} > \tau_{b \delta}^{(1)},$ due
to recursions \eqref{MIN-REC} and \eqref{MAX-REC}, the dynamics of the
queue until $\tau_{b \delta}^{(1)}$ is described by
\begin{align*}
  W_{n}^{(1)} = \left( W_{n-1}^{(1)} + X_{n} \right)^{+} \text{ and }
  W_{n}^{(2)} = W_{n-1}^{(2)} - T_{n}
\end{align*}
for $1 \leq n < \tau_{b \delta}^{(1)},$ in conjunction with
$W_0^{(1)} = \xi_1 < b \delta_-$ and $W_0^{(2)} = \xi_2 > b \delta_+.$
As a result,
\[T_1 + \ldots + T_{\tau_{b \delta}^{(1)}-1} = \xi_2-W_{\tau_{b
    \delta}^{(1)}-1}^{(2)} \leq \xi_2- b \delta_+\]
whenever $\bar{\tau}_{b\delta^+}^{(2)} > \tau_{b \delta}^{(1)}.$
Therefore,
$\tau_{b \delta}^{(1)} \leq N_A\left( \xi_2 - b\delta^+\right) + 1,$
which in turn implies that
\begin{align*}
  \max_{0 < n \leq N_A\left( \xi_2 - b\delta^+\right) + 1} W_n^{(1)}
  > b \delta.
\end{align*}
Consequently,
\begin{align*}
  \label{INTER-INNER-EXP-UB}
  \chi \left( \xi_1, \xi_2\right) \leq \E_{(\xi_1,\xi_2)}\left[ \left(
  C_1 \max_{0 < n \leq N_A\left( \xi_2 - 
  b\delta^+\right) + 1} W_n^{(2)} + C_0\right) I \left( \max_{0 < n
  \leq N_A\left( \xi_2 - b\delta^+\right) + 1} W_n^{(1)} 
  > b \delta \right) \right].
\end{align*}
The following result is verified in Appendix \ref{APP-OTHERS}.
\begin{lemma}
  \label{LEM_SINGLE_Q_DOM}
  Suppose that ${\bf W}_{0}=(w_{1},w_2),$ and recall the definitions
  $X_n := V_{n-1} - T_{n}, \ S_0 := 0$ and
  $S_n := X_1 + \ldots + X_n.$ Then, for all $n\geq 0$,
  \begin{equation*}
    \max_{0 < k \leq n} W_{k}^{\left( i\right) } \leq 2\max_{0 \leq
      k \leq  n}\left\vert S_{k}\right\vert +w_i, \ i=1,2.
  \end{equation*}
\end{lemma}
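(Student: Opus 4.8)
The plan is to prove the two bounds separately, handling $i=2$ first because $W^{(2)}$ is the larger coordinate and its recursion is the cleaner one to dominate, and then deducing the $i=1$ bound from the $i=2$ bound together with the Kiefer--Wolfowitz structure. First I would observe that, by the recursions \eqref{MIN-REC} and \eqref{MAX-REC}, for every $k \geq 1$ we have $W_k^{(2)} \leq (W_{k-1}^{(1)} + X_k)^+ \vee (W_{k-1}^{(2)} - T_k)^+$, and in particular $W_k^{(1)} + W_k^{(2)} \leq W_{k-1}^{(1)} + W_{k-1}^{(2)} + (V_{k-1} - 2T_k)^+ \leq W_{k-1}^{(1)} + W_{k-1}^{(2)} + X_k^+$; but the sharper route is to compare directly with the unreflected walk. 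The key elementary fact I would isolate is that for the single-server-type recursion $R_n = (R_{n-1} + X_n)^+$ started from $R_0 = r \geq 0$, one has $R_n \leq r + \max_{0 \leq k \leq n} S_n^* $ where $S_n^* := \sum_{j=1}^n X_j$ measured from the appropriate index — more precisely, $R_n = \max(r + S_n, \max_{1\leq j \leq n}(S_n - S_{j-1})) \leq r + \max_{0 \leq j \leq n} S_j - \min_{0\leq j\leq n} S_j \leq r + 2\max_{0\leq k\leq n}|S_k|$, using $S_0 = 0$. This is just the Lindley/Spitzer representation of a reflected walk, and the factor $2$ comes from bounding the running range $\max S - \min S$ by $2\max|S|$.

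Next I would treat $W^{(1)}$. Here the recursion $W_k^{(1)} = (W_{k-1}^{(1)} + V_{k-1} - T_k)^+ \wedge (W_{k-1}^{(2)} - T_k)^+$ means $W^{(1)}$ is dominated, term by term, by the process $(W_{k-1}^{(1)} + X_k)^+$; that is, if we define $U_n$ by $U_0 = w_1$, $U_n = (U_{n-1} + X_n)^+$, an easy induction (using monotonicity of $x \mapsto (x + X_n)^+$ and the fact that dropping the $\wedge$-term only increases the value) gives $W_n^{(1)} \leq U_n$ for all $n$. Then the Lindley bound from the previous paragraph applied to $U_n$ gives $W_n^{(1)} \leq w_1 + 2\max_{0\leq k\leq n}|S_k|$, which is exactly the $i=1$ claim. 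For $i=2$, I would similarly dominate $W^{(2)}$: from \eqref{MAX-REC}, $W_n^{(2)} \leq (W_{n-1}^{(1)} + X_n)^+ \vee (W_{n-1}^{(2)} - T_n)^+ \leq (W_{n-1}^{(1)} + X_n)^+ \vee W_{n-1}^{(2)}$, but this only shows $W_n^{(2)}$ decreases or jumps to the $W^{(1)}$-side value, so a cleaner approach is to note $W_n^{(1)} + W_n^{(2)} \leq (W_{n-1}^{(1)} + W_{n-1}^{(2)}) + X_n^+ - (\text{nonnegative})$ and track the total workload; however the most robust argument is simply that $\max_k W_k^{(2)} \leq \max_k W_k^{(1)} + (w_2 - w_1)^+ + (\text{service-induced jump})$. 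Rather than chase that, I would prove the $i=2$ bound by the same direct domination: define $\tilde U_0 = w_2$, $\tilde U_n = (\tilde U_{n-1} + X_n)^+$, and show by induction that $W_n^{(2)} \leq \tilde U_n$ — the induction step uses $W_n^{(2)} \leq W_{n-1}^{(1)} + W_{n-1}^{(2)} - \min(W_{n-1}^{(1)},\,T_n) + V_{n-1} \cdot \mathbf{1}(\cdots)$, which needs a little care, so in fact one dominates $W_n^{(1)} + W_n^{(2)}$ by $w_1 + w_2 + \sum X_k^+$ and separately bounds — see below.

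I expect the main obstacle to be the $i=2$ case: unlike $W^{(1)}$, the coordinate $W^{(2)}$ is not directly a one-sided reflection of a single random walk, because the ``input'' to server $2$ at step $n$ can be either the overflow $W_{n-1}^{(1)} + X_n$ or the old value $W_{n-1}^{(2)} - T_n$, and the relevant increment $X_n = V_{n-1} - T_n$ uses $V_{n-1}$ which is the job that may or may not land on server $2$. The clean fix, which I would adopt, is to work with $M_n := W_n^{(1)} + W_n^{(2)}$: the Kiefer--Wolfowitz recursions give $M_n = (W_{n-1}^{(1)} + V_{n-1} - T_n)^+ + (W_{n-1}^{(2)} - T_n)^+ \leq (M_{n-1} + X_n)^+$ when $W_{n-1}^{(2)} \geq T_n$ (the typical case), and in general $M_n \leq M_{n-1} + X_n^+$, whence $M_n \leq (w_1 + w_2) + \sum_{k\le n} X_k^+$ — but that bound has the wrong ($\sum X_k^+$ rather than $\max|S_k|$) form. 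So instead I would directly verify $M_n \leq \max(M_{n-1}+X_n,\,0,\,W_{n-1}^{(1)}+X_n)$ type inequalities reducing to the scalar Lindley recursion with initial value $w_1 + w_2$, giving $W_n^{(2)} \leq M_n \leq (w_1+w_2) + 2\max_{0\le k\le n}|S_k|$ — but the statement asks for $w_2$, not $w_1 + w_2$, so one must exploit $w_1 \leq$ the running maximum or absorb $w_1$ differently. The resolution, which I believe is what the appendix does, is to note that after the first step the $\wedge$ in \eqref{MIN-REC} forces $W_1^{(1)} \leq (w_2 - T_1)^+ \leq w_2 + |S_1|$, so $W^{(1)}$ is effectively re-initialised at a value controlled by $w_2$; combining this with the reflected-walk bound on the subsequent evolution and a relabelling of the walk increments yields $\max_{0<k\le n} W_k^{(2)} \leq 2\max_{0\le k\le n}|S_k| + w_2$, and the analogous argument (now genuinely straightforward, since $W^{(1)} \leq U_n$ directly) gives the $i=1$ bound. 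The bookkeeping of initial conditions and the index shift in the $\max$ (note the statement's $\max_{0<k\le n}$ on the left versus $\max_{0\le k\le n}$ on the right) is the only delicate point; everything else is the standard Lindley/Spitzer identity plus monotone induction.
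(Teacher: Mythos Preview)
Your treatment of $i=1$ is fine and is exactly what the paper does: dominate $W_n^{(1)}$ by the scalar Lindley recursion $U_n=(U_{n-1}+X_n)^+$ started at $w_1$, then use the Spitzer representation $U_n=\max_{0\le j\le n}(S_n-S_j)+w_1\le S_n-\min_{0\le j\le n}S_j+w_1$ and bound the running range by $2\max|S_k|$.

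For $i=2$, though, you abandon the right idea too early. You propose $\tilde U_0=w_2$, $\tilde U_n=(\tilde U_{n-1}+X_n)^+$ and the inductive claim $W_n^{(2)}\le\tilde U_n$, then say the induction ``needs a little care'' and retreat to total-workload arguments and a re-initialisation trick. In fact the induction step is immediate once you use the two structural facts you already have in hand: the coordinates are ordered, $W_{k-1}^{(1)}\le W_{k-1}^{(2)}$, and service times are non-negative, $V_{k-1}\ge 0$. From \eqref{MAX-REC},
\[
W_k^{(2)}=(W_{k-1}^{(1)}+X_k)^+\vee(W_{k-1}^{(2)}-T_k)^+\le (W_{k-1}^{(2)}+X_k)^+\vee(W_{k-1}^{(2)}+V_{k-1}-T_k)^+=(W_{k-1}^{(2)}+X_k)^+,
\]
so $W_k^{(2)}\le(W_{k-1}^{(2)}+X_k)^+$ exactly as for $i=1$, and the same Lindley expansion with initial value $w_2$ finishes the proof. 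This is precisely the paper's route: both coordinates satisfy the single one-line domination $W_k^{(i)}\le(W_{k-1}^{(i)}+X_k)^+$, so there is no need for the $M_n=W_n^{(1)}+W_n^{(2)}$ detour or any re-initialisation at step~$1$. Your more elaborate plans for $i=2$ may be salvageable, but they are unnecessary and, as written, do not close (the $M_n$ argument lands on $w_1+w_2$ rather than $w_2$, as you noticed).
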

\noindent As a consequence of Lemma \ref{LEM_SINGLE_Q_DOM},
\begin{align*}
  \chi \left( \xi_1, \xi_2 \right) \leq 
  C_3 \E \left[  \left( \max_{0 \leq n \leq N_A\left( \xi_2 - 
  b\delta^+\right) + 1} 2S_n+ \xi_2\right) I \left( \max_{0 \leq n
  \leq N_A\left( \xi_2 - b\delta^+\right) + 1} 2S_n + \xi_1 
  > b \delta \right) \ \left| \frac{}{} \right. \ \xi_1,
  \xi_2\right]. 
\end{align*}
where the constant $C_0$ been absorbed in another suitable constant
$C_3.$ Then it is immediate from \eqref{INTER-B1} that
\begin{align*}
  B_1(b) \leq C_3 
  \E \left[  \left( \max_{0 \leq n \leq
  N_A\left( \xi_2 -  
  b\delta^+\right) + 1} 2S_n+ \xi_2\right) I \left( \max_{0 \leq n
  \leq N_A\left( \xi_2 - b\delta^+\right) + 1} 2S_n  
  >  \left(\delta - \delta_-\right) b \right) \right]
  \Pr_{\bf 0}  \left\{ \tau_{b \delta_+}^{(2)} < 
  \tau_0\right\}. 
\end{align*}
Here, recall that $\xi_2 := W_{\tau_{b \delta_+}^{(2)}}^{(2)},$ which
is stochastically dominated by the conditional distribution of
$X + b \delta_+$ given that $X > b \delta_+$ (due to Lemma
\ref{LEM_ST_DOM}). Since $B_2(b)$ is a non-decreasing function of
$\xi_2,$ we use the above stochastic dominance to yield
\begin{align*}
  B_1(b) &\leq C_3 \Pr_{\bf 0}  \left\{ \tau_{b \delta_+}^{(2)} < 
           \tau_0\right\} \times \\
         &\quad\quad 
           \E \left[  \left( \max_{0 \leq n \leq
           N_A\left( X \right) + 1} 2S_n+ X + b\delta_+\right) I \left( \max_{0
           \leq n  \leq N_A\left( X \right) + 1} 2S_n  
           >  \left(\delta - \delta_-\right) b \right) \
           \left|\frac{}{}\right. X > b\delta_+ \right].  
\end{align*}
Lemma \ref{LEM_RED_RW} follows from the above inequality once we
observe that $X + b \delta_+ \leq 2 X$ when $X > b\delta_+.$ This
completes the proof of Lemma \ref{LEM_RED_RW}. \hfill{$\Box$}
% Observe that $X > b \delta_+$ and
% $\Pr_{\bf 0}\{ \tau_{b \delta_+}^{(2)} < \tau_0 \} \leq C_2\Pr\{ X > b
% \delta_+\}$ due to Lemma \ref{LEM_LB_PROB}. As a result, we arrive at
% \begin{align*}
%   B_1(b) \leq 2 C_2 C_3 \E \left[ \left( \max_{0 \leq n \leq
%   N_A\left( X \right) + 1} S_n + X \right) I \left( \max_{0
%   \leq n  \leq N_A\left( X \right) + 1} 2S_n  
%   >  \left(\delta - \delta_-\right) b \right) I \left( X >
%   b\delta_+\right) \right],
% \end{align*}
% which proves Lemma \ref{LEM_RED_RW}. \hfill{$\Box$}

\subsection*{Part 3.a) Simplifications using uniform large deviations:
the $\alpha > 2$ case}
Using classical results borrowed from the literature on large
deviations for zero-mean random walks, we aim to prove the following
result in this subsection.

\begin{lemma}
\label{Lem_Simplifcation}
Suppose that Assumptions \ref{ASSUMP-DIST-V} and
\ref{ASSUMP_LEFT_TAIL} are in force, $\alpha >2$ and $\rho =1$. Then,
\begin{equation*}
  B_{3}\left( b\right) =O\left( b^{2}\bar{B}\left( b\right) +b^{2} \frac{\bar{B}
      \left( b^{2}\right)}{\bar{B}(b)} \right).
\end{equation*}
\end{lemma}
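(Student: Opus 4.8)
The plan is to reduce $B_{3}(b)$ to a one-dimensional integral against the conditional law of $X$ given $\{X>b\delta_+\}$, after first controlling the random summation horizon $N_A(X)+1$ and then invoking classical uniform large-deviation estimates for the centred walk $(S_n)$. \emph{Step 1 (taming the horizon).} Since $\rho=1$ the increments $X_i=V_{i-1}-T_i$ are centred, and $\alpha>2$ makes both $\sigma^{2}:=\mathrm{Var}(X)$ and $\E[T^{2}]$ finite. Conditionally on $X=x$, the index $\nu:=N_A(x)+1=\inf\{n\ge 1:T_1+\dots+T_n>x\}$ is a stopping time for the filtration generated by $(V_i,T_i)$, with respect to which $(S_n)$ is a martingale, and a standard bound on the renewal function gives $\E[\nu\mid X=x]=U(x)+1=O(1+x)$. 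Because the interarrival increments $1-T_i$ are bounded above by $1$, a Bernstein inequality yields $\Pr\{N_A(x)>2x\}\le e^{-cx}$; combined with Doob's $L^{2}$-maximal inequality and Wald's identity ($\E[\max_{n\le\nu}S_n^{2}\mid X=x]\le 4\sigma^{2}(U(x)+1)=O(x)$) and Cauchy--Schwarz, the contribution to $B_{3}(b)$ of the event $\{N_A(X)+1>2X\}$ is super-exponentially small and is discarded. Writing $M_x:=\max_{0\le n\le\lceil 2x\rceil}|S_n|$ and noting $\Pr\{X>b\delta_+\}=\Theta(\bar B(b))$ by regular variation, it then suffices to bound
\[
\frac{1}{\bar B(b)}\int_{b\delta_+}^{\infty}\E\!\Big[(x+M_x)\,I\big(2M_x>(\delta-\delta_-)b\big)\Big]\,\Pr\{X\in dx\}.
\]

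\emph{Step 2 (uniform large deviations).} The crucial input is the classical uniform estimate for random walks with regularly varying increments (A.V.\ Nagaev, S.V.\ Nagaev): there are constants such that for all $m\ge 1$ and $y>0$,
\[
\Pr\Big\{\max_{n\le m}|S_n|>y\Big\}\ \le\ C_1\,m\,\bar F(y)\ +\ C_2\,e^{-c\,y^{2}/m}.
\]
Using this together with $\E[M_x;M_x>t]=t\,\Pr\{M_x>t\}+\int_t^{\infty}\Pr\{M_x>s\}\,ds$, the Karamata bound $\int_t^{\infty}\bar F(s)\,ds=\Theta(t\bar F(t))$, and $\int_t^{\infty}e^{-cs^{2}/m}\,ds=O(\sqrt m\,e^{-ct^{2}/m})$, one obtains, for $b$ large and uniformly in $x>b\delta_+$,
\[
\E\!\Big[(x+M_x)\,I\big(2M_x>(\delta-\delta_-)b\big)\Big]\ =\ O\!\Big(x^{2}\bar F(b)\ +\ (x+b+\sqrt x)\,e^{-c'b^{2}/x}\Big).
\]
It is essential here to use the sharp rate $m\bar F(y)$ and not the moment surrogate $m\,\E|X_1|^{p}/y^{p}$: since Assumptions \ref{ASSUMP-DIST-V} and \ref{ASSUMP_LEFT_TAIL} guarantee $\E|X_1|^{p}<\infty$ only for $p<\alpha$, the moment version would produce a term of order $b^{2-p}$ in place of $b^{2}\bar B(b)$ and would fail to close the argument.

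\emph{Step 3 (integration).} It remains to integrate the two pieces against $\Pr\{X\in dx\}$ over $(b\delta_+,\infty)$, using $\bar F\sim\bar B$. By Karamata's theorem $\E[X^{2};X>b\delta_+]=\Theta(b^{2}\bar B(b))$ --- this is exactly where $\alpha>2$ enters --- so the first piece contributes $\Theta\big(\bar F(b)\,b^{2}\bar B(b)\big)=\Theta\big(b^{2}\bar B^{2}(b)\big)$. For the second piece the substitution $u=b^{2}/x$ turns $\int_{b\delta_+}^{\infty}x\,e^{-c'b^{2}/x}\,x^{-\alpha-1}L(x)\,dx$ into $b^{2-2\alpha}\int_{0}^{O(b)}u^{\alpha-2}e^{-c'u}L(b^{2}/u)\,du=\Theta\big(b^{2-2\alpha}L(b^{2})\big)=\Theta\big(b^{2}\bar B(b^{2})\big)$, the integrand being dominated by the integrable $u^{\alpha-2}e^{-c'u}$ (again $\alpha>2$) and the slowly varying factor contributing $L(b^{2})$; the $(b+\sqrt x)e^{-c'b^{2}/x}$ term is handled identically and contributes only $O(b\,\bar B(b^{2}))=o(b^{2}\bar B(b^{2}))$. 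Dividing through by $\bar B(b)$ gives $B_{3}(b)=O\big(b^{2}\bar B(b)+b^{2}\bar B(b^{2})/\bar B(b)\big)$, as required. One also sees where the two competing terms come from: the heavy-tailed part $m\bar F(y)$ of the Nagaev bound (a ``second jump'' of size $\Theta(b)$) produces $b^{2}\bar B(b)$, whereas the Gaussian part $e^{-cy^{2}/m}$ (the CLT-scale fluctuation of the critically loaded single-server queue absorbing one ``first jump'' of size $\Theta(b^{2})$) produces $b^{2}\bar B(b^{2})/\bar B(b)$.

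\emph{Main obstacle.} The heart of the matter is Step 2: securing a form of Nagaev's uniform large-deviation bound that is genuinely uniform in the number of steps $m$, applies to the running maximum rather than to $S_m$ alone, and carries the heavy-tailed rate $m\bar F(y)$ rather than a moment surrogate, and then extracting from it the matching estimate for the truncated first moment $\E[M_x;M_x>t]$. Everything else is routine manipulation of regularly varying functions via Karamata's and Potter's bounds.
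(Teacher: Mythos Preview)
Your proposal is correct and follows essentially the same route as the paper: discard the event $\{N_A(X)+1>2X\}$ via an exponential bound, apply a Nagaev-type uniform large-deviation estimate for $\max_{n\le m}|S_n|$ (the paper uses the Brownian-max form $\Pr\{\max_{t\le 1}\sigma|B(t)|>y/\sqrt m\}$ in place of your $e^{-cy^2/m}$, which is equivalent up to constants), and then integrate the Gaussian and heavy-tail pieces against the law of $X$ using Karamata and Potter bounds. One cosmetic point: in Step~3 you write $\Pr\{X\in dx\}=x^{-\alpha-1}L(x)\,dx$, but a regularly varying tail need not come from such a density---the paper handles this via an integration by parts (and its auxiliary Lemma~\ref{LEM-BSQ-SIMP-TERM}), which you should do as well.
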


We begin by recalling results on uniform large deviations for
regularly varying random walks. For example, the following large
deviations result which holds under Assumptions \ref{ASSUMP-DIST-V}
and \ref{ASSUMP_LEFT_TAIL} assuming that $\alpha >2$, is well-known
\begin{equation}
\Pr \{S_{m}>b\}=\left( \bar{\Phi}\left( \frac{b}{\sqrt{m}\sigma }\right)
+m\Pr \{X_{1}>b\}\right) \left( 1+o\left( 1\right) \right) ,\text{ as }%
m\rightarrow \infty ,  \label{SUM-ASYMP1}
\end{equation}%
uniformly for $b>\sqrt{m}$, where $\bar{\Phi}\left( \cdot \right) $ is
the tail of a standard normal distribution. The asymptotic
approximation (\ref {SUM-ASYMP1}) is due to A. V. Nagaev (see Theorem
1.9 of \cite{nagaev1979} or Corollary 7 of
\cite{rozovskii1989probabilities}).

For our purposes, we need an extension of (\ref{SUM-ASYMP1}), in which
$S_{n}$ is replaced by
$\max_{0\leq k\leq m}\left\vert S_{k}\right\vert $. However, we do not
need exact asymptotic results as in (\ref{SUM-ASYMP1}), but only an
asymptotic upper bound. This is the content of the following result,
which is proved in Appendix \ref{APP-OTHERS} as an immediate
consequence of Corollary 1 of \cite{doi:10.1137/1126006}. (For related
uniform sample path large deviations results see \cite{MR2424161}, and
the related Theorem 5 of \cite{doi:10.1137/S0040585X97978877}.)

\begin{lemma}
  \label{LEM-MAX-ABS-SUM-ASYMP}
  Suppose that $V$ satisfies Assumption \ref{ASSUMP-DIST-V} with
  $\alpha >2 $, and $T$ satisfies Assumption
  \ref{ASSUMP_LEFT_TAIL}. Recall that $X_1,X_2,\ldots$ are
  i.i.d. copies of $X = V -T.$ Then, there exists a positive integer
  $m_{0}$ such that for all $x \geq m^{1/2}$ and $m > m_0$
\begin{equation*}
  \Pr \left\{ \max_{0\leq k\leq m}\left\vert S_{k}\right\vert >x\right\}
  \leq 3\left( \Pr\left\{ \max_{0\leq t\leq 1}\sigma \left\vert B\left(
          t\right) \right\vert >\frac{x}{m^{1/2}}\right\} +m\Pr\left\{
      \left\vert X\right\vert >x\right\} \right) ,
\end{equation*}
where $\sigma ^{2}=Var[ X] $ and $B\left( \cdot \right) $ is a
standard Brownian motion.
\end{lemma}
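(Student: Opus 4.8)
The inequality is the maximal-process counterpart of Nagaev's uniform large-deviation estimate \eqref{SUM-ASYMP1}, and the plan is to deduce it from the corresponding uniform statement for $\max_{0\le k\le m}|S_k|$, namely Corollary 1 of \cite{doi:10.1137/1126006}. I would begin by verifying that Assumptions \ref{ASSUMP-DIST-V} and \ref{ASSUMP_LEFT_TAIL} place us in the heavy-tailed uniform large-deviation regime in which that corollary applies: since $\E T=\E V$ we have $\E X=0$; since $\alpha>2$ both $V$ and (by Assumption \ref{ASSUMP_LEFT_TAIL}) $T$ have finite second moment, so $\sigma^2=\mathrm{Var}[X]<\infty$; and combining \eqref{tail_equiv_F_B} with $\Pr\{T>x\}=o(\bar{B}(x))$, the two-sided tail $\Pr\{|X|>x\}=\Pr\{X>x\}+\Pr\{X<-x\}$ is regularly varying with index $-\alpha$ and equivalent to $\bar{B}(x)$, the left tail $\Pr\{X<-x\}\le\Pr\{T>x\}$ being of strictly smaller order. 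Corollary 1 of \cite{doi:10.1137/1126006} then yields, uniformly over $x\ge\sqrt{m}$ as $m\to\infty$,
\[
  \Pr\Bigl\{\max_{0\le k\le m}|S_k|>x\Bigr\}=\Bigl(\Pr\Bigl\{\max_{0\le t\le1}\sigma|B(t)|>x/\sqrt{m}\Bigr\}+m\,\Pr\{|X|>x\}\Bigr)\bigl(1+o(1)\bigr),
\]
and the lemma follows by choosing $m_0$ so large that the error factor, which is uniform in $x\ge\sqrt{m}$, is below $3$ for every $m>m_0$; if instead the cited corollary is phrased with the Gaussian tail $\bar{\Phi}(x/(\sigma\sqrt{m}))$ in place of the Brownian maximal probability, one converts using the reflection estimate $\bar{\Phi}(u/\sigma)\le\tfrac{1}{2}\Pr\{\max_{0\le t\le1}\sigma|B(t)|>u\}$, which again leaves ample slack inside the factor $3$.

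For orientation I would also indicate why those two terms are the natural ones, since this is what makes the appeal to the cited corollary transparent. Writing $\{\max_k|S_k|>x\}=\{\max_kS_k>x\}\cup\{\max_k(-S_k)>x\}$ and using a union bound reduces the two-sided maximum to one-sided maxima of the walks with increments $X_i$ and $-X_i$; in the large-deviation range this recombines exactly through $m\Pr\{X>x\}+m\Pr\{X<-x\}=m\Pr\{|X|>x\}$, and in the central-limit range the cost is absorbed in the error factor via $2\Pr\{\max_{0\le t\le1}B(t)>u\}=\Pr\{\max_{0\le t\le1}|B(t)|>u\}(1+o(1))$ as $u\to\infty$. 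For $x$ of order $\sqrt{m}$, Donsker's functional central limit theorem, $m^{-1/2}S_{\lfloor mt\rfloor}\Rightarrow\sigma B(t)$ in $D[0,1]$, together with continuity of $\omega\mapsto\sup_{[0,1]}|\omega|$, identifies the first term as the limit of the left side, while $m\Pr\{|X|>x\}=O\bigl(m^{1-\alpha/2}L(x)\bigr)\to0$ is negligible against it because $\alpha>2$; for $x/\sqrt{m}\to\infty$ the single-big-jump term takes over and the Brownian term is super-polynomially small. The nontrivial content of Corollary 1 is precisely that it glues these two regimes uniformly across the transition band.

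The only real obstacle, therefore, lies inside the cited Corollary 1, namely establishing uniformity over the whole half-line $x\ge\sqrt{m}$ and, in particular, handling the intermediate range $x/\sqrt{m}\to\infty$ with $x/m\to0$ where neither the Gaussian approximation nor the pure one-big-jump heuristic is individually accurate. Should a self-contained derivation be wanted, the route would be to truncate each increment at level $x$ (the discarded mass contributing the term $m\Pr\{|X|>x\}$ by a union bound), absorb the resulting mean shift $O\bigl(m\,x^{1-\alpha}L(x)\bigr)=o(x)$ — valid precisely because $x\ge\sqrt{m}$ — into a translation of $x$, and apply a maximal Fuk--Nagaev (or Bernstein) exponential inequality to the truncated zero-mean walk, comparing the resulting $\exp(-cx^2/(m\sigma^2))$-type bound with $\bar{\Phi}(x/(\sigma\sqrt{m}))$; recovering the clean Brownian form with the explicit constant $3$ still needs the sharp uniform estimate, so invoking Corollary 1 is the efficient path. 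A final point to check is that the threshold $m_0$ produced in this way does not depend on $x$, which is exactly what the uniform-in-$x$ formulation of Corollary 1 guarantees.
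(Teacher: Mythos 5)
Your proposal is correct and follows essentially the same route as the paper: it rests on Corollary 1 of the cited reference for the one-sided maximum (with the CLT/Donsker covering the regime $x/\sqrt{m}=O(1)$), splits $\{\max_k|S_k|>x\}$ into the events for $S_k$ and $-S_k$ via a union bound, uses Assumption \ref{ASSUMP_LEFT_TAIL} to control the negative-side tail, and absorbs the resulting constants into the factor $3$. The additional truncation/Fuk--Nagaev sketch is not needed, but the core argument matches the paper's proof.
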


\noindent We also need the following pair of standard results on
regular variation: Karamata's theorem (refer Theorem 1 in Chapter
VIII.9 of \cite {feller1971introduction}) and Potter's bounds (see,
for example, Theorem 1.1.4 of \cite{MR2424161})
\begin{proposition}[Karamata's theorem]
  Suppose that $v(t) = t^{-\alpha}L(t)$ for some slowly varying
  function $L(\cdot)$ and $\alpha$ satisfying $\alpha - \beta > 1.$
  Then
\begin{equation}
  \int_x^\infty u^\beta v(u)du \sim
  \frac{x^{\beta+1}v(x)}{\alpha-\beta-1}, \text{ as } x \rightarrow
  \infty.
  \label{KARAMATA}
\end{equation}
On the other hand, if $\alpha - \beta < 1,$ then 
\begin{equation}
  \label{KARAMATA-II}
  \int_0^x u^\beta v(u)du \sim
  \frac{x^{\beta+1}v(x)}{1-\alpha+\beta}, \text{ as } x \rightarrow
  \infty.
\end{equation}
\end{proposition}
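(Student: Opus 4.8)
The plan is to prove both parts by the classical change of variables that reduces the integral to a dominated convergence statement, with Potter's bounds (stated alongside) supplying the integrable envelope. Throughout I assume, as holds in all our applications, that $v$ is locally bounded so that the integrals are well defined.

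\emph{Part 1: $\alpha-\beta>1$.} Put $\gamma:=\alpha-\beta>1$, so that $u^{\beta}v(u)=u^{-\gamma}L(u)$. The substitution $u=xs$ gives
\[
  \int_x^{\infty}u^{\beta}v(u)\,du = x^{1-\gamma}\int_1^{\infty}s^{-\gamma}L(xs)\,ds = x^{\beta+1}v(x)\int_1^{\infty}s^{-\gamma}\frac{L(xs)}{L(x)}\,ds,
\]
so it suffices to show the last integral converges to $\int_1^{\infty}s^{-\gamma}\,ds=(\gamma-1)^{-1}$ as $x\to\infty$. For each fixed $s\geq 1$, slow variation gives $L(xs)/L(x)\to 1$. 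To dominate, fix $\epsilon\in(0,\gamma-1)$; Potter's bounds furnish $x_0$ such that $L(xs)/L(x)\leq 2s^{\epsilon}$ for all $x\geq x_0$ and all $s\geq 1$ (here $xs\geq x\geq x_0$ automatically). Hence $s^{-\gamma}L(xs)/L(x)\leq 2s^{-(\gamma-\epsilon)}$, which is integrable on $[1,\infty)$ since $\gamma-\epsilon>1$. Dominated convergence yields \eqref{KARAMATA}.

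\emph{Part 2: $\alpha-\beta<1$.} Now $u^{\beta}v(u)=u^{\beta-\alpha}L(u)$ with $\beta-\alpha>-1$, and the same substitution gives
\[
  \int_0^{x}u^{\beta}v(u)\,du = x^{\beta+1}v(x)\int_0^{1}s^{\beta-\alpha}\frac{L(xs)}{L(x)}\,ds,
\]
so we must show the last integral converges to $\int_0^1 s^{\beta-\alpha}\,ds=(1+\beta-\alpha)^{-1}$. Potter's bounds only apply where both arguments are large, so split the integral at $s=x_0/x$. On $[x_0/x,1]$ the integrand converges pointwise to $s^{\beta-\alpha}$ and, choosing $\epsilon\in(0,1+\beta-\alpha)$, Potter's bounds give $L(xs)/L(x)\leq 2s^{-\epsilon}$ for $x\geq x_0$ and such $s$, so the integrand is dominated by $2s^{\beta-\alpha-\epsilon}\in L^1(0,1)$; dominated convergence (with the vanishing lower limit) gives $\int_{x_0/x}^1\to(1+\beta-\alpha)^{-1}$. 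For the remaining piece, substituting back $t=xs$,
\[
  \int_0^{x_0/x}s^{\beta-\alpha}\frac{L(xs)}{L(x)}\,ds = \frac{1}{x^{\beta-\alpha+1}L(x)}\int_0^{x_0}t^{\beta-\alpha}L(t)\,dt,
\]
a fixed finite constant divided by $x^{\beta-\alpha+1}L(x)$, which diverges because it is regularly varying of positive index $\beta-\alpha+1$; so this term vanishes. Combining the two pieces gives \eqref{KARAMATA-II}.

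\emph{Main obstacle.} The only real point is justifying the interchange of limit and integral: pointwise convergence of $L(xs)/L(x)$ is immediate from the definition of slow variation, but a uniform integrable majorant is needed, and this is precisely what Potter's bounds provide. In Part 1 the restriction $s\geq 1$ keeps both arguments large, so the domination is global; in Part 2 the region $s\downarrow 0$ falls outside the reach of Potter's bounds, which forces the split and the separate (and ultimately negligible) estimate of the small-$s$ contribution, using that a strictly positive power of $x$ dominates the slowly varying factor.
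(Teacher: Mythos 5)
Your proof is correct. Note, though, that the paper does not prove this proposition at all: it is quoted as a classical result with a pointer to Feller (Chapter VIII.9) and to Bingham--Goldie--Teugels, so there is no in-paper argument to compare against. What you have written is the standard modern proof of the direct half of Karamata's theorem: rescale $u=xs$ to pull out the factor $x^{\beta+1}v(x)$, use slow variation for pointwise convergence of $L(xs)/L(x)$, and use Potter's bounds to supply an integrable majorant for dominated convergence, with the extra split at $s=x_0/x$ in the $\alpha-\beta<1$ case because Potter's bounds require both arguments to be large; the leftover piece is a constant divided by the regularly varying quantity $x^{\beta-\alpha+1}L(x)$ of positive index, hence negligible. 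Two small points you should make explicit if this were to stand alone: first, the Potter bound as stated in the paper is for $v(ct)/v(t)$ with $\alpha>0$, so to bound $L(xs)/L(x)$ you either invoke the slowly varying ($\alpha=0$) form of Potter's theorem or multiply the paper's bound by $s^{\alpha}$, which gives exactly the $s^{\pm\varepsilon}$ envelope you use (and in Part 2 one should pick $\varepsilon$ smaller than both $1+\beta-\alpha$ and the range allowed by whichever Potter statement is invoked); second, in \eqref{KARAMATA-II} the finiteness of $\int_0^{x_0}t^{\beta-\alpha}L(t)\,dt$ is an assumption about behaviour near the origin (your local boundedness hypothesis), which is harmless in the paper's applications where the relevant functions are bounded near $0$, but is genuinely needed for the statement with lower limit $0$ to make sense.
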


\begin{proposition}[Potter's bounds]
If $v(t)=t^{-\alpha}L\left( t\right) $ for some $\alpha >0$ and some slowly
varying function $L\left( \cdot \right) $ then, for any $\varepsilon \in
\left( 0,\min (\alpha ,1)\right) ,$ there exists a $t_{\varepsilon }>0$ such
that for all $t$ and $c$ satisfying $t\geq t_{\varepsilon }$ and $ct\geq
t_{\varepsilon },$ 
\begin{equation}
(1-\varepsilon )\min \{c^{-\alpha +\varepsilon },c^{-\alpha -\varepsilon
}\}\leq \frac{v(ct)}{v(t)}\leq (1+\varepsilon )\max \{c^{-\alpha
+\varepsilon },c^{-\alpha -\varepsilon }\}.  
\label{POTT_BND}
\end{equation}
\end{proposition}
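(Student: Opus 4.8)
The plan is to peel off the power $c^{-\alpha}$ and reduce the claim to a Potter-type estimate for the slowly varying factor $L$. Since $v(ct)/v(t)=c^{-\alpha}L(ct)/L(t)$ and $c^{-\alpha}>0$, it suffices to show that for every $\varepsilon\in(0,1)$ there is $t_\varepsilon>0$ such that $t\ge t_\varepsilon$ and $ct\ge t_\varepsilon$ force
\begin{equation*}
(1-\varepsilon)\min\{c^{\varepsilon},c^{-\varepsilon}\}\ \le\ \frac{L(ct)}{L(t)}\ \le\ (1+\varepsilon)\max\{c^{\varepsilon},c^{-\varepsilon}\};
\end{equation*}
multiplying through by $c^{-\alpha}$ converts $\min\{c^{\varepsilon},c^{-\varepsilon}\}$ into $\min\{c^{-\alpha+\varepsilon},c^{-\alpha-\varepsilon}\}$, and likewise for the maximum, which is exactly \eqref{POTT_BND}. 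Note that the exponent $\varepsilon$ is left unchanged by this reduction, and only $\varepsilon<1$ (not $\varepsilon<\alpha$) is actually used in what follows.

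The main input I would use is the Karamata representation theorem: a slowly varying $L$ can be written, for $x$ beyond some $x_0\ge 1$, as $L(x)=b(x)\exp\big(\int_{x_0}^x\eta(u)\,u^{-1}\,du\big)$ with $b(x)\to b_0\in(0,\infty)$ and $\eta(u)\to 0$ as $u\to\infty$. Given $\varepsilon$, I would set $\varepsilon':=\varepsilon/(1+\varepsilon)\in(0,\varepsilon)$ and choose $t_\varepsilon\ge x_0$ so large that $|\eta(u)|\le\varepsilon$ for all $u\ge t_\varepsilon$ and $b(x)/b(y)\in[1-\varepsilon',1+\varepsilon']$ for all $x,y\ge t_\varepsilon$ (possible since $b$ has a positive finite limit). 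For $c\ge 1$ with $t,ct\ge t_\varepsilon$ the representation gives $L(ct)/L(t)=\tfrac{b(ct)}{b(t)}\exp\big(\int_t^{ct}\eta(u)\,u^{-1}\,du\big)$, and since $u\ge t\ge t_\varepsilon$ on $[t,ct]$ one has $\big|\int_t^{ct}\eta(u)\,u^{-1}\,du\big|\le\varepsilon\log c$, so the exponential lies in $[c^{-\varepsilon},c^{\varepsilon}]$; combined with the control on $b(ct)/b(t)$ this yields $(1-\varepsilon')c^{-\varepsilon}\le L(ct)/L(t)\le(1+\varepsilon')c^{\varepsilon}$, which is inside the target bound because $1-\varepsilon'>1-\varepsilon$, $1+\varepsilon'<1+\varepsilon$, and $c^{-\varepsilon}=\min$, $c^{\varepsilon}=\max$ when $c\ge 1$.

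For $c<1$ I would argue by symmetry. Writing $c=1/c'$ with $c'>1$ and $t'=ct$, the condition $t,ct\ge t_\varepsilon$ gives $t',c't'\ge t_\varepsilon$, and the case already handled applied to $c'$ at the point $t'$ yields $(1-\varepsilon')(c')^{-\varepsilon}\le L(c't')/L(t')\le(1+\varepsilon')(c')^{\varepsilon}$; taking reciprocals and using $L(t')/L(c't')=L(ct)/L(t)$ gives $\tfrac{1}{1+\varepsilon'}c^{\varepsilon}\le L(ct)/L(t)\le\tfrac{1}{1-\varepsilon'}c^{-\varepsilon}$. The choice $\varepsilon'=\varepsilon/(1+\varepsilon)$ is precisely what makes $1/(1-\varepsilon')=1+\varepsilon$, while $1/(1+\varepsilon')\ge 1-\varepsilon'\ge 1-\varepsilon$; since $c^{\varepsilon}=\min$ and $c^{-\varepsilon}=\max$ for $c<1$, this again matches. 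Combining the two ranges of $c$ with the reduction step proves \eqref{POTT_BND}.

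I do not expect a genuine obstacle here — this is a classical estimate. The one place needing care is the constant bookkeeping: the raw bounds come with $(1\pm\varepsilon')$ factors and, in the $c<1$ branch, with reciprocals of them, and one must pick $\varepsilon'$ (e.g. $\varepsilon/(1+\varepsilon)$) so that all of these collapse into the single pair $1\pm\varepsilon$ required by \eqref{POTT_BND}. If one prefers to avoid invoking the representation theorem, the alternative is to apply the uniform convergence theorem for slowly varying functions on the band $c\in[1,2]$, obtaining $L(ct)/L(t)\in[1-\delta,1+\delta]$ for $t$ large, and then chain it across the $\lceil\log_2 c\rceil$ dyadic subintervals of $[t,ct]$; the resulting product bound $(1+\delta)^{1+\log_2 c}=(1+\delta)\,c^{\log(1+\delta)/\log 2}$ has exponent pushed below $\varepsilon$ by choosing $\delta$ small, and the lower bound and the range $c<1$ are treated the same way.
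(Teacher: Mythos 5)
Your proposal is correct. Note that the paper does not prove this proposition at all: Potter's bounds are quoted as a classical fact with a citation (Theorem 1.1.4 of the cited monograph), so there is no in-paper argument to compare against; what you have written is essentially the standard textbook derivation that the cited source relies on. Your reduction to the slowly varying factor is sound (multiplying the bounds for $L(ct)/L(t)$ by $c^{-\alpha}>0$ does convert $\min\{c^{\varepsilon},c^{-\varepsilon}\}$ and $\max\{c^{\varepsilon},c^{-\varepsilon}\}$ into the stated exponents), the use of the Karamata representation with $|\eta(u)|\le\varepsilon$ beyond $t_\varepsilon$ gives the factor in $[c^{-\varepsilon},c^{\varepsilon}]$ for $c\ge 1$, and the bookkeeping for $c<1$ via reciprocals is handled correctly: the choice $\varepsilon'=\varepsilon/(1+\varepsilon)$ makes $1/(1-\varepsilon')=1+\varepsilon$ exactly and $1/(1+\varepsilon')\ge 1-\varepsilon$, so both branches land inside the $(1\pm\varepsilon)$ envelope, and the symmetry step is legitimate because $t'=ct\ge t_\varepsilon$ and $c't'=t\ge t_\varepsilon$. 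Two small remarks: the representation theorem requires $L$ to be measurable, which is implicit here since $L(x)=x^{\alpha}\bar{B}(x)$ comes from a distribution tail; and, as you observe, the restriction $\varepsilon<\alpha$ in the statement is not needed for the inequality itself (it only matters in applications where one wants the exponent $-\alpha+\varepsilon$ to remain negative). Your alternative sketch via the uniform convergence theorem and dyadic chaining is also a viable, slightly more elementary route, though as written it is only an outline.
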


\noindent We establish Lemma \ref{Lem_Simplifcation} in two parts. The
first task involves analysing the relatively easier term, which has
the running maximum appearing only in the indicator
function. % The first task is to simplify the expression of $B_3(b)$ in
% order to not deal with $N_{A}\left( \cdot \right):$

% \begin{lemma}
% \label{Lem_Get_Rid_NA}
% There exists $\varepsilon >0$ such that
% \begin{equation*}
% B_{3}\left( b\right) =\mathbb{E}\left[ I\left( \max_{0\leq k\leq
% 2X}2\left\vert S_{n}\right\vert >\left( \delta -\delta _{-}\right) b\right)
% \left( X+\max_{0\leq k\leq 2X}\left\vert S_{n}\right\vert \right) \
% \left|\frac{}{}\right. X > b\delta_+ \right] +O\left( \exp \left(
%   -\varepsilon b\right) \right) , 
% \end{equation*}
% as $b\rightarrow \infty $.
% \end{lemma}

% \noindent The second task involves analysing the relatively easier
% term, which has the running maximum appearing only in the indicator
% function.

\begin{lemma}
  \label{Lem_No_Run_Max_Mult}
  Under Assumption \ref{ASSUMP-DIST-V} with $ \alpha >2$, and
  Assumption \ref{ASSUMP_LEFT_TAIL},
  \begin{equation*}
    \mathbb{E}\left[ I\left( \max_{0\leq n\leq N_A(X) + 1}2\left\vert S_{n}\right\vert
        >\left( \delta -\delta _{-}\right) b\right) X \ \left|\frac{}{}\right.
      X > b\delta_+ \right]
    =O\left( b^{2}\bar{B}\left( b\right)  +  b^{2}\frac{\bar{B}\left(
          b^{2}\right)}{\bar{B}(b)} \right).
  \end{equation*}
\end{lemma}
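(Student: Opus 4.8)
\textit{Proof plan.} The plan is to condition on the size of the big jump $X$, reduce the quantity to a one-dimensional integral against the law of $X$, and split that integral according to whether $X$ is of order at most $b^{2}$ or larger. Write $F$ for the law of $X$ and $\bar{F}(x)=\Pr\{X>x\}$; recall $\bar{F}\sim\bar{B}$ by \eqref{tail_equiv_F_B}, so $\bar{F}(b\delta_{+})=\Theta(\bar{B}(b))$, and put $\theta:=(\delta-\delta_{-})/2>0$ (note $\theta<1/2$). Since $X$ is independent of the post-jump data $(N_{A}(\cdot),(S_{n})_{n\ge0})$, the expectation in the statement equals
\[
\frac{1}{\bar{F}(b\delta_{+})}\int_{b\delta_{+}}^{\infty}x\,p(x)\,F(dx),\qquad p(x):=\Pr\Big\{\max_{0\le n\le N_{A}(x)+1}|S_{n}|>\theta b\Big\},
\]
and I would split this at $x_{\ast}:=\theta^{2}b^{2}/4$. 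On the range $x\ge x_{\ast}$ one simply uses $p(x)\le1$, so that part contributes $\bar{F}(b\delta_{+})^{-1}\E[X;X>x_{\ast}]$; Karamata's theorem \eqref{KARAMATA} (with $\beta=0$, needing only $\alpha>1$) together with regular variation gives $\E[X;X>x_{\ast}]=\Theta(x_{\ast}\bar{B}(x_{\ast}))=\Theta(b^{2}\bar{B}(b^{2}))$, and dividing by $\bar{F}(b\delta_{+})=\Theta(\bar{B}(b))$ yields the term $b^{2}\bar{B}(b^{2})/\bar{B}(b)$.

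On the range $b\delta_{+}\le x<x_{\ast}$ I would first bound, by a union bound, $p(x)\le\Pr\{N_{A}(x)\ge2\lceil x\rceil\}+\Pr\{\max_{0\le n\le2\lceil x\rceil}|S_{n}|>\theta b\}$. The first probability equals $\Pr\{T_{1}+\dots+T_{2\lceil x\rceil}\le x\}$; since $T\ge0$ and $\E T=1$, a Chernoff bound on $-T$ bounds it by $e^{-cx}\le e^{-cb\delta_{+}}$ for some $c>0$, negligible against any power of $b$. For the second probability, the cutoff $x_{\ast}$ was chosen so that $\theta b\ge(2\lceil x\rceil)^{1/2}$ on this range, so Lemma \ref{LEM-MAX-ABS-SUM-ASYMP} applies with $m=2\lceil x\rceil$ and threshold $\theta b$ and gives, up to a constant factor, $\Pr\{\sigma\max_{0\le t\le1}|B(t)|>\theta b/\sqrt{2\lceil x\rceil}\}+2\lceil x\rceil\,\Pr\{|X|>\theta b\}$; the reflection principle bounds the first summand by $Ce^{-\kappa b^{2}/x}$ for some $\kappa>0$, and the triangle inequality with Assumption \ref{ASSUMP_LEFT_TAIL} and regular variation gives $\Pr\{|X|>\theta b\}=O(\bar{B}(b))$, hence $2\lceil x\rceil\,\Pr\{|X|>\theta b\}=O(x\bar{B}(b))$.

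It then remains to integrate $x\,p(x)$ against $F(dx)$ over $[b\delta_{+},x_{\ast}]$ using $p(x)\le e^{-cb\delta_{+}}+Ce^{-\kappa b^{2}/x}+Cx\bar{B}(b)$. The exponentially small piece contributes $O(e^{-cb\delta_{+}}\E|X|)$, which is negligible. The piece from $Cx\bar{B}(b)$ is $C\bar{B}(b)\int_{b\delta_{+}}^{x_{\ast}}x^{2}\,F(dx)\le C\bar{B}(b)\,\E[X^{2};X>b\delta_{+}]$, and here Karamata's theorem \eqref{KARAMATA} with $\beta=1$ — which requires $\alpha>2$, and this is exactly where the hypothesis enters — gives $\E[X^{2};X>b\delta_{+}]=\Theta(b^{2}\bar{B}(b))$; after dividing by $\bar{F}(b\delta_{+})=\Theta(\bar{B}(b))$ this produces the term $b^{2}\bar{B}(b)$. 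The remaining and most delicate piece is $\int_{b\delta_{+}}^{x_{\ast}}x\,e^{-\kappa b^{2}/x}\,F(dx)$. For this I would integrate by parts (to avoid assuming $F$ has a density): the boundary term at $x_{\ast}$ is discarded, the boundary term at $b\delta_{+}$ carries a factor $e^{-\kappa b/\delta_{+}}$ and is negligible, and with $g(x):=x\,e^{-\kappa b^{2}/x}$ it remains to estimate $\int_{b\delta_{+}}^{x_{\ast}}g'(x)\bar{F}(x)\,dx$ where $g'(x)=e^{-\kappa b^{2}/x}(1+\kappa b^{2}/x)$. Substituting $u=b^{2}/x$ turns this into $b^{2}\bar{B}(b^{2})$ times integrals of the form $\int_{4/\theta^{2}}^{\infty}u^{\gamma}e^{-\kappa u}\,du$, once one replaces $\bar{F}(b^{2}/u)/\bar{F}(b^{2})$ by its Potter-bound \eqref{POTT_BND} majorant $Cu^{\alpha+\varepsilon}$ (valid since $u\ge4/\theta^{2}\ge1$ and $b^{2}/u\ge b\delta_{+}\to\infty$); these integrals are finite constants, so this piece is $O(b^{2}\bar{B}(b^{2}))$, contributing $b^{2}\bar{B}(b^{2})/\bar{B}(b)$ after normalization.

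Collecting all the pieces gives $O(b^{2}\bar{B}(b)+b^{2}\bar{B}(b^{2})/\bar{B}(b))$, as claimed. The main obstacle is precisely the last estimate: one must recognize that the interplay between the stretched-exponential factor $e^{-\kappa b^{2}/x}$ and the regularly varying measure $F$ concentrates the mass near $x\asymp b^{2}$ (the top of the truncation range) and then control that contribution to the correct order, for which the $u=b^{2}/x$ change of variables combined with Potter's bounds is the natural device; a secondary point requiring care is fixing the cutoff $x_{\ast}$ so that the hypothesis $\mathrm{threshold}\ge m^{1/2}$ of Lemma \ref{LEM-MAX-ABS-SUM-ASYMP} holds uniformly on the lower range, and checking that the downward-deviation bound for $N_{A}(x)$ needs only $T\ge0$ (not any tail assumption on $T$).
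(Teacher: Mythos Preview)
Your proof is correct and follows essentially the same approach as the paper: condition on $X$, replace $N_A(X)+1$ by $2X$ (handling the complementary large-deviations event separately), apply Lemma~\ref{LEM-MAX-ABS-SUM-ASYMP}, treat the heavy-tailed jump term via Karamata, and handle the Gaussian piece via integration by parts followed by the substitution $u=b^{2}/x$ with Potter's bounds (which the paper packages as Lemma~\ref{LEM-BSQ-SIMP-TERM}). The only cosmetic difference is that you extract the $b^{2}\bar{B}(b^{2})/\bar{B}(b)$ contribution from the region $x\ge x_{\ast}$ via the trivial bound $p(x)\le1$, whereas the paper carries the Brownian term across the full range and invokes the CLT for $2t>c^{2}b^{2}$; both routes are valid.
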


\noindent Following this, we estimate the term in which the running
maximum appears both multiplying and inside the indicator.

\begin{lemma}
  \label{Lem_Run_Max_Appers}
  Under Assumption \ref{ASSUMP-DIST-V} with $ \alpha >2$, and
  Assumption \ref{ASSUMP_LEFT_TAIL},
  \begin{equation*}
    \mathbb{E}\left[ I\left( \max_{0\leq n\leq N_A(X) + 1}2\left\vert S_{n}\right\vert
        >\left( \delta -\delta _{-}\right) b\right) \max_{0\leq n\leq
        N_A(X) + 1}\left\vert
        S_{n}\right\vert \ \left|\frac{}{}\right. X>b\delta_+ \right]
    =O\left( b^{2}\bar{B}\left( b\right) \right).
\end{equation*}
\end{lemma}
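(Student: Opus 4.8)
The plan is to adapt the argument behind Lemma~\ref{Lem_No_Run_Max_Mult}; the new feature is that the running maximum also multiplies the indicator, so I would first pass through the layer-cake formula in order to reduce everything to tail bounds for $\max_{0\le k\le m}|S_{k}|$, and then split the range of $X$ at $b^{2}$. Write $\mu:=b\delta_{+}$, $\kappa:=(\delta-\delta_{-})/2$ and $M_{m}:=\max_{0\le k\le m}|S_{k}|$, so the quantity to control is $\E[M_{N_{A}(X)+1}I(M_{N_{A}(X)+1}>\kappa b)\mid X>\mu]$. Since $X$ is independent of the random walk $(S_{n})$ and of the arrival process $N_{A}(\cdot)$, conditioning on $X$ gives
\begin{equation*}
  \E\bigl[M_{N_{A}(X)+1}\,I(M_{N_{A}(X)+1}>\kappa b)\mid X>\mu\bigr]=\frac{1}{\bar F(\mu)}\int_{\mu}^{\infty}h(x)\,F(dx),
\end{equation*}
where $F$ is the law of $X$ and $h(x):=\E[M_{N_{A}(x)+1}I(M_{N_{A}(x)+1}>\kappa b)]$; since $\bar F(\mu)\sim\bar B(b\delta_{+})\sim\delta_{+}^{-\alpha}\bar B(b)$ it suffices to prove $\int_{\mu}^{\infty}h(x)\,F(dx)=O(b^{2}\bar B(b)^{2})$. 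For $h(x)$ I would use $h(x)=\kappa b\,\Pr\{M_{N_{A}(x)+1}>\kappa b\}+\int_{\kappa b}^{\infty}\Pr\{M_{N_{A}(x)+1}>y\}\,dy$, and I would fix $\ve_{0}\in(0,\kappa^{2}/2)$ so that $\sqrt{2x+2}\le\kappa b$ for all $\mu<x\le\ve_{0}b^{2}$ and all large $b$.

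On the range $\mu<x\le\ve_{0}b^{2}$, first excise the event $\{N_{A}(x)>2x\}$: a Chernoff bound for the lower deviations of $T_{1}+\cdots+T_{\lceil 2x\rceil}$ gives $\Pr\{N_{A}(x)>2x\}\le e^{-\eta x}$ for some $\eta>0$, and on that event $M_{N_{A}(x)+1}$ is controlled crudely via Doob's $L^{2}$-maximal inequality applied conditionally on $N_{A}(x)$, contributing $O(\sqrt{x}\,e^{-cx})$ to $h(x)$. On the complement one has the pointwise inequalities $M_{N_{A}(x)+1}I(N_{A}(x)\le 2x)\le M_{m^{\ast}}$ and $\{M_{N_{A}(x)+1}>y\}\cap\{N_{A}(x)\le 2x\}\subseteq\{M_{m^{\ast}}>y\}$ with the deterministic horizon $m^{\ast}:=\lceil 2x\rceil+1$, so up to the negligible term above $h(x)\le\kappa b\,\Pr\{M_{m^{\ast}}>\kappa b\}+\int_{\kappa b}^{\infty}\Pr\{M_{m^{\ast}}>y\}\,dy$. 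Because $\sqrt{m^{\ast}}\le\kappa b\le y$ throughout this range, Lemma~\ref{LEM-MAX-ABS-SUM-ASYMP} applies at every level and bounds $\Pr\{M_{m^{\ast}}>y\}$ by the sum of a Gaussian term $\lesssim\exp(-y^{2}/2\sigma^{2}m^{\ast})$ and a big-jump term $\lesssim m^{\ast}\,\Pr\{|X|>y\}$, where $\Pr\{|X|>y\}\sim\bar B(y)$ by Assumptions~\ref{ASSUMP-DIST-V}--\ref{ASSUMP_LEFT_TAIL} and \eqref{tail_equiv_F_B}. Performing the $y$-integration (Karamata's theorem \eqref{KARAMATA} gives $\int_{\kappa b}^{\infty}\bar B(y)\,dy\asymp b\bar B(b)$, and the Gaussian piece is handled by the standard tail estimate) and using $m^{\ast}\asymp x$ yields, for $\mu<x\le\ve_{0}b^{2}$,
\begin{equation*}
  h(x)=O\bigl(b\,x\,\bar B(b)\bigr)+O\bigl((b+x/b)\,e^{-cb^{2}/x}\bigr)+O\bigl(\sqrt{x}\,e^{-cx}\bigr).
\end{equation*}
Integrating against $F(dx)$: the first term contributes $O\bigl(b\bar B(b)\int_{\mu}^{\infty}x\,F(dx)\bigr)=O(b^{2}\bar B(b)^{2})$ by Karamata (since $\int_{\mu}^{\infty}x\,F(dx)\asymp\mu\bar F(\mu)\asymp b\bar B(b)$), which is the dominant contribution; the second is at most $O\bigl(b\int_{\mu}^{\ve_{0}b^{2}}e^{-cb^{2}/x}\,F(dx)\bigr)$, and the substitution $x=b^{2}/s$ together with Potter's bounds \eqref{POTT_BND} turns it into $O(b^{1-2\alpha}L(b^{2}))=o(b^{2}\bar B(b)^{2})$, the last step because slow variation forces $L(b^{2})/L(b)^{2}=b^{o(1)}$; the third is $O(e^{-cb})$.

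On the range $x>\ve_{0}b^{2}$ I would discard the indicator altogether and use only $h(x)\le\E[M_{N_{A}(x)+1}]\le C\sqrt{\E[N_{A}(x)]+1}=O(\sqrt{x})$, where $\E[N_{A}(x)]=O(x)$ is the elementary renewal-function bound (which needs only $\E T>0$) and the square-root bound on $\E[M_{m}]$ is Doob's inequality. Karamata then gives $\int_{\ve_{0}b^{2}}^{\infty}\sqrt{x}\,F(dx)\asymp b\,\bar B(b^{2})$. This is exactly where the present lemma gains over Lemma~\ref{Lem_No_Run_Max_Mult}: in the huge-jump region it is $\sqrt{X}$, not $X$, that gets integrated against the tail, so the contribution is $b\bar B(b^{2})$ rather than $b^{2}\bar B(b^{2})$; and $b\bar B(b^{2})=O(b^{2}\bar B(b)^{2})$ amounts to $L(b^{2})=O(bL(b)^{2})$, which holds because every slowly varying function is $b^{o(1)}$. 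Collecting the three pieces gives $\int_{\mu}^{\infty}h(x)\,F(dx)=O(b^{2}\bar B(b)^{2})$, and dividing by $\bar F(\mu)\asymp\bar B(b)$ completes the proof.

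The hard part will be the small-jump range: requiring Lemma~\ref{LEM-MAX-ABS-SUM-ASYMP} to be usable at every level $y\ge\kappa b$ forces the cutoff $x\le\ve_{0}b^{2}$, and one must then verify that the CLT/Brownian-scale deviation contribution --- the integral $\int_{\mu}^{\ve_{0}b^{2}}e^{-cb^{2}/x}\,F(dx)$ --- is of strictly smaller order than the big-jump contribution $b^{2}\bar B(b)^{2}$; this is the delicate change-of-variables-plus-Potter's-bounds estimate. By contrast the big-jump estimate (which produces the dominant $b^{2}\bar B(b)$ after renormalization) and the crude $\sqrt{x}$ bound in the huge-jump region (which produces $b\bar B(b^{2})$) are routine, modulo the elementary slowly varying inequality $L(b^{2})=O(bL(b)^{2})$.
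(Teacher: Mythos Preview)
Your proposal is correct and follows essentially the same route as the paper: both use the layer-cake representation, split the range of the conditioning variable at order $b^{2}$, invoke Lemma~\ref{LEM-MAX-ABS-SUM-ASYMP} on the small-jump range, excise $\{N_{A}(x)>2x\}$ via the Chernoff bound for lower deviations of $T_{1}+\cdots+T_{n}$, and close with the slowly-varying comparison $L(b^{2})=O(bL(b)^{2})$. The paper merely orders the two splits differently (first $\{N_{A}(X)+1\le 2X\}$, then $t$ versus $b^{2}$) and, in the huge-jump region, keeps the Brownian tail bound from Lemma~\ref{LEM-MAX-ABS-SUM-ASYMP} rather than discarding the indicator and using the moment bound $\E[M_{N_{A}(x)+1}]=O(\sqrt{x})$.

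One caveat on your Doob step: the phrase ``applied conditionally on $N_{A}(x)$'' is slippery, because $N_{A}(x)$ and $(S_{n})$ are \emph{not} independent (both involve the $T_{i}$), so naive conditioning does not preserve the martingale structure. What does work is that $N_{A}(x)+1$ is a stopping time for the filtration $\mathcal{G}_{n}=\sigma(V_{0},\ldots,V_{n-1},T_{1},\ldots,T_{n})$, and since $X_{n}=V_{n-1}-T_{n}$ is independent of $\mathcal{G}_{n-1}$, $(S_{n})$ is a $(\mathcal{G}_{n})$-martingale with conditional increment variance $\sigma^{2}$; then Doob's $L^{2}$ maximal inequality together with the martingale Wald identity give $\E[M_{N_{A}(x)+1}^{2}]\le 4\sigma^{2}\E[N_{A}(x)+1]=O(x)$, which is exactly the bound you want. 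With that justification in place, your huge-jump argument is arguably cleaner than the paper's CLT route.
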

\noindent Lemma \ref{LEM-BSQ-SIMP-TERM}, whose proof is given in
Appendix \ref{APP-OTHERS}, will be useful in proving Lemmas
\ref{Lem_No_Run_Max_Mult} and \ref{Lem_Run_Max_Appers}.
\begin{lemma}
  \label{LEM-BSQ-SIMP-TERM}
  If $v(x) = x^{-\alpha}l(x)$ for some $\alpha > 2$ and a function
  $l(\cdot)$ slowly varying at infinity, then for every $c > 0,$
  \[ \int_{b}^\infty v(t) \exp\left( -c\frac{b^2}{t}\right) dt = O \left(
    b^2 v \left(b^2\right)\right).\]
\end{lemma}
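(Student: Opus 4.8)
\textbf{Proof proposal for Lemma \ref{LEM-BSQ-SIMP-TERM}.}

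The plan is to split the range of integration at $t = b^2$ and bound the two pieces separately, exploiting the exponential factor on $[b, b^2]$ and the polynomial decay of $v$ on $[b^2, \infty)$.

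On $[b^2, \infty)$ the exponential factor $\exp(-cb^2/t)$ is at most $1$, so this piece is dominated by $\int_{b^2}^\infty v(t)\,dt$, which by Karamata's theorem \eqref{KARAMATA} (with $\beta = 0$ and $\alpha > 2 > 1$) is $\sim b^2 v(b^2)/(\alpha-1) = O(b^2 v(b^2))$. On $[b, b^2]$ the idea is that the exponential factor $\exp(-cb^2/t)$ is small precisely where $v(t)$ is large (near $t = b$), which is what kills the contribution. Concretely, I would substitute $t = b^2/s$, so that $dt = -b^2 s^{-2}\,ds$ and as $t$ ranges over $[b,b^2]$, $s$ ranges over $[1, b]$; the integral becomes $b^2\int_1^b v(b^2/s)\,s^{-2}e^{-cs}\,ds$. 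Using $v(b^2/s) = (b^2/s)^{-\alpha}l(b^2/s)$ and Potter's bounds \eqref{POTT_BND} to control the slowly varying part (for $b$ large, $l(b^2/s)/l(b^2) \le (1+\varepsilon)\max\{s^{\varepsilon}, s^{-\varepsilon}\} \le (1+\varepsilon)s^{\varepsilon}$ since $s \ge 1$), one gets that this piece is at most a constant times $b^2 v(b^2) \int_1^\infty s^{\alpha - 2 + \varepsilon} e^{-cs}\,ds$, and the remaining integral converges (it is a fixed finite constant, independent of $b$, as long as $\varepsilon$ is chosen in $(0,\min(\alpha,1))$, which is permissible for any $\alpha > 2$). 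Hence this piece is also $O(b^2 v(b^2))$.

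Combining the two pieces yields $\int_b^\infty v(t)e^{-cb^2/t}\,dt = O(b^2 v(b^2))$, as claimed. The one technical point to be careful about is the application of Potter's bounds: they require both arguments to exceed the threshold $t_\varepsilon$, i.e. $b^2/s \ge t_\varepsilon$ and $b^2 \ge t_\varepsilon$; since $s \le b$ on the relevant range, $b^2/s \ge b \to \infty$, so this holds for all $b$ large enough, and the contribution from any bounded initial range of $b$ is absorbed into the $O(\cdot)$ constant. I do not anticipate a serious obstacle here — the only mild subtlety is bookkeeping the interplay between the substitution, Potter's bounds, and the convergence of the residual $s$-integral, which forces the choice $\varepsilon < 1$ (harmless since $\alpha > 2$).
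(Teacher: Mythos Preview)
Your proof is correct and follows essentially the same route as the paper: Potter's bounds combined with the substitution $s=b^{2}/t$ reduce the integral to $b^{2}v(b^{2})$ times a convergent Gamma-type integral. The paper does it in one shot (applying Potter's bounds to $v(t)/v(b^{2})$ over the whole range $[b,\infty)$ and then substituting $u=b^{2}/t$), whereas you split at $t=b^{2}$ and handle $[b^{2},\infty)$ by Karamata instead; this is a cosmetic difference and both arguments are equally short.
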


\begin{proof}[Proof of Lemma \ref{Lem_No_Run_Max_Mult}] 
  Letting $c = (\delta-\delta_-)/2,$ observe that
  \begin{align}
    &\mathbb{E}\left[ I\left( \max_{0\leq n\leq N_A(X)}2\left\vert S_{n}\right\vert
      >\left( \delta -\delta _{-}\right) b, \  N_A(X) + 1 \leq 2X\right) X \
      \left|\frac{}{}\right.  X > b\delta_+ \right] \nonumber\\
    &\quad\quad \leq \int_{b\delta_+}^\infty t \Pr \left\{ \max_{0 \leq n
      \leq 2t} \left \vert S_n\right\vert > cb\right\} \frac{\Pr \left\{ X
      \in dt\right\}}{\Pr\left\{ X > b\delta_+\right\}} \nonumber\\ 
    &\quad\quad \leq 3\int_{b\delta_+}^\infty t \Pr \left\{ \max_{0 \leq s
      \leq 1} \sigma B(s) > \frac{cb}{\sqrt{2t}}  \right\} \frac{\Pr\{ X \in
      dt\}}{\Pr\left\{ X > b\delta_+\right\}} + 
      3\int_{b\delta_+}^{\frac{c^2b^2}{2}} 2t^2 \Pr \left\{ 
      \left\vert X  \right\vert> cb \right\} \frac{\Pr \left\{ X \in
      dt\right\}}{\Pr\left\{ X > b\delta_+\right\}} \label{INTER-B31-1} 
  \end{align}
  because of the application of the uniform asymptotic presented in
  Lemma \ref{LEM-MAX-ABS-SUM-ASYMP} in the region $2t \leq c^2b^2$ and
  Central Limit Theorem in the region $2t > c^2b^2.$ Recall from
  \eqref{tail_equiv_F_B} that $\Pr\{ X > x\} \sim \bar{B}(x),$ and
  subsequently, due to Karamata's theorem (\ref{KARAMATA}), we obtain
  \begin{equation*}
    \int_{b\delta_+}^{\infty }t^{2}\mathbb{P}\left\{ X\in dt\right\}
    \leq \mathbb{E} \left[ X^{2}I\left( X> b\delta_+\right) \right]
    =O\left( \int_{b\delta_+}^{\infty }s\mathbb{ 
        P}\left\{ X>s\right\} ds\right) =O\left( b^{2}\bar{B}\left(
        b\right) \right) 
%    \label{1st_Ap_KAR}
  \end{equation*}
  and therefore
  \begin{equation}
    \frac{\mathbb{P}\left( \left\vert X \right\vert >c
        b\right)}{\Pr\left\{ X > b \delta_+\right\}} 
    \int_{b}^{\infty }t^{2}\mathbb{P}\left\{ X\in 
     dt\right\} =O\left( b^{2}\bar{B}\left( b\right)\right) .
    \label{INTER-B31-2}
  \end{equation}
  To deal with the first term in \eqref{INTER-B31-1}, we do
  integration by parts (by taking
  $u = \Pr\{ \max_{0 \leq s \leq 1} B(s) > cb/\sqrt{2\sigma t} \}$ and
  $v = \int_{t}^\infty \Pr\{X > u\}du - t\Pr\{X > t\}$) to obtain
  \begin{align*}
    \int_{b\delta_+}^\infty t \Pr \left\{ \max_{0 \leq s
    \leq 1} \sigma B(s) > \frac{cb}{\sqrt{2t}}  \right\} \Pr\{ X \in
    dt\} &= O\left( b \int_{b\delta_+}^\infty
           \frac{\Pr\{X>t\}}{\sqrt{t}} \exp \left(
           -\frac{cb^2}{4\sigma 
           t}\right) dt \right),
  \end{align*}
  which, in turn, is $O(b \times b\bar{B}(b^2))$ because of Lemma
  \ref{LEM-BSQ-SIMP-TERM}. Therefore, due to \eqref{INTER-B31-1} and
  \eqref{INTER-B31-2}, along with the observation that
  $\Pr\{ X > b \delta_+\} = \Theta(\bar{B}(b))$ (due to regular
  variation), we obtain
  \begin{align}
    \mathbb{E}\left[ I\left( \max_{0\leq n \leq N_A(X) + 1}2\left\vert S_{n}\right\vert
    >\left( \delta -\delta _{-}\right) b, N_A(X) + 1 \leq 2X\right) X \
    \left|\frac{}{}\right.  X > b\delta_+ \right] = O \left( b^2
    \bar{B}(b) + b^2 \frac{\bar{B}(b^2)}{\bar{B}(b)}\right). 
    \label{INTER-B31-3}
  \end{align}
  On the other hand, given that $N_A(t)/t \rightarrow 1$ as
  $t \rightarrow \infty,$ the event $\{N_A(t) > 2t-1\}$ corresponds
  to a large deviations event with exponentially small probability for
  large values of $t.$ Therefore, we have that
   \begin{align*}
     &\mathbb{E}\left[ I\left( \max_{0\leq n\leq N_A(X)}2\left\vert S_{n}\right\vert
       >\left( \delta -\delta _{-}\right) b, N_A(X) + 1> 2X\right) X \
       \left|\frac{}{}\right.  X > b\delta_+ \right]\\
     &\quad\quad \quad\quad \leq
       \int_{b\delta_+}^\infty t \Pr \left\{ N_A \left( t \right) >
       2t - 1\right\} \Pr \left\{ X \in dt \right\} = O \left( \exp
       \left(-\gamma b \right)\right),
   \end{align*}
   for a suitable $\gamma > 0.$ This observation, along with
   \eqref{INTER-B31-3}, concludes the proof of Lemma
   \ref{Lem_No_Run_Max_Mult}.
\end{proof}

\noindent The proof of Lemma \ref{Lem_Run_Max_Appers}, where running
maximum appears twice, is similar, but more involved, and is presented
in Appendix \ref{APP-OTHERS}, so that we can continue with central
arguments in the main body of the paper. Before moving to Part 3.b) of
the proof, it is important to note that Lemma \ref{Lem_Simplifcation}
stands proved as an immediate consequence of Lemmas
\ref{Lem_No_Run_Max_Mult} and \ref{Lem_Run_Max_Appers}.

\subsection*{Part 3.b) Simplifications using uniform large deviations:
  the $\alpha \in (1,2)$ case}
We shall leverage much of the reasoning behind Part 3.a) and prove the
following result:
\begin{lemma}
\label{LEM-SIMP-INF-VAR}
Suppose that $\bar{B}\left( x\right) \sim cx^{-\alpha }$ as $x\rightarrow
\infty $ for some $c>0$ and $\alpha \in \left( 1,2\right) $. Also, suppose
that Assumption \ref{ASSUMP_LEFT_TAIL} holds. Then, 
\begin{equation*}
  B_{3}\left( b\right) =O\left( \frac{b^{\alpha }\bar{B}\left( b^{\alpha
        }\right)}{\bar{B}(b)} 
  \right).
\end{equation*}
\end{lemma}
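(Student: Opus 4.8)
The plan is to rerun the argument of Part 3.a) essentially verbatim, with the Gaussian/Nagaev uniform estimate of Lemma \ref{LEM-MAX-ABS-SUM-ASYMP} replaced by its infinite-variance counterpart. The first step is therefore to record the analogue of Lemma \ref{LEM-MAX-ABS-SUM-ASYMP}: there is an integer $m_0$ such that for every $m>m_0$ and every $x\ge m^{1/\alpha}$,
\[
\Pr\{\max_{0\le k\le m}|S_k|>x\}\le C\bigl(\Pr\{\max_{0\le t\le 1}|Z(t)|>x\,m^{-1/\alpha}\}+m\,\Pr\{|X|>x\}\bigr),
\]
where $Z(\cdot)$ is the $\alpha$-stable L\'evy process arising as the weak limit of $(b^{-1}S_{\lfloor tb^\alpha\rfloor})$ in $D[0,\infty)$ --- the same process used in Case 3 of Section \ref{SEC-LB}. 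As with Lemma \ref{LEM-MAX-ABS-SUM-ASYMP}, I would derive this in Appendix \ref{APP-OTHERS} from Corollary 1 of \cite{doi:10.1137/1126006} together with the infinite-variance uniform large-deviation expansion of $\Pr\{S_m>x\}$. A useful simplification is that, for $\alpha<2$, $\sup_{0\le t\le 1}|Z(t)|$ is itself regularly varying of index $\alpha$, so on $x\ge m^{1/\alpha}$ the two terms on the right are of the same order $\Theta(m\bar B(x))$; in particular one may replace the displayed bound by $\Pr\{\max_{0\le k\le m}|S_k|>x\}\le Cm\bar B(x)$, which makes the infinite-variance case genuinely simpler than the $\alpha>2$ case.

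Next I would split $B_3(b)$ exactly as $B_3$ was split in Part 3.a): into the term carrying the factor $X$ (running maximum appearing only inside the indicator --- the analogue of Lemma \ref{Lem_No_Run_Max_Mult}), and the term carrying the factor $\max_{0\le n\le N_A(X)+1}|S_n|$ (running maximum appearing both inside and multiplying --- the analogue of Lemma \ref{Lem_Run_Max_Appers}). In each, condition on $X=t$ (independent of the walk $(S_n)$ and of $N_A$), discard the event $\{N_A(t)+1>2t\}$ at the usual exponentially small cost since $N_A(t)/t\to1$, and use $\max_{0\le n\le N_A(t)+1}|S_n|\le\max_{0\le n\le\lceil2t\rceil}|S_n|$. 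This turns both terms into one-dimensional integrals $\int_{b\delta_+}^\infty(\cdots)\,\Pr\{X\in dt\}/\Pr\{X>b\delta_+\}$ whose inner factors are, respectively, $t\,\Pr\{\max_{0\le n\le\lceil2t\rceil}|S_n|>cb\}$ and $\E[\max_{0\le n\le\lceil2t\rceil}|S_n|\,;\,\max_{0\le n\le\lceil2t\rceil}|S_n|>cb]$, with $c:=(\delta-\delta_-)/2$.

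Then I would evaluate these integrals by splitting the $t$-range at the critical scale $t^\ast:=(cb)^\alpha/2$ --- the scale at which a server idled for time $t$ lets the other (critically loaded, single-server) queue build up work of order $b$, equivalently the scale at which $x=cb$ stops dominating $m^{1/\alpha}$ with $m=\lceil2t\rceil$. For $t\le t^\ast$ I apply the uniform estimate; for $t>t^\ast$ I use the crude bounds $\Pr\{\max_{0\le n\le\lceil2t\rceil}|S_n|>cb\}\le1$ and $\E[\max_{0\le n\le\lceil2t\rceil}|S_n|]=O(t^{1/\alpha+\epsilon})$ for every $\epsilon>0$ (by Doob's $L^q$ maximal inequality for the martingale $S_n$ with $1<q<\alpha$, together with a von Bahr--Esseen bound). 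Each resulting one-dimensional integral is dispatched by Karamata's theorem (\ref{KARAMATA})--(\ref{KARAMATA-II}) via the moment asymptotics $\E[X^2I(X\le y)]=O(y^{2-\alpha})$ and $\E[X^\gamma I(X>y)]=O(y^{\gamma-\alpha})$ for $0<\gamma<\alpha$, together with $\bar B(x)\sim cx^{-\alpha}$. One finds that both halves of the integral coming from the $X$-term contribute at order $b^{2\alpha-\alpha^2}$, whereas every piece of the $\max|S_n|$-term contributes a strictly smaller power of $b$ (for instance $b^{2-\alpha}$ and essentially $b^{1+\alpha-\alpha^2}$, both $o(b^{2\alpha-\alpha^2})$ because $\alpha>1$ makes $(\alpha-1)(\alpha-2)<0$). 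Since $b^{2\alpha-\alpha^2}=b^\alpha\bar B(b^\alpha)/\bar B(b)$ under $\bar B(x)\sim cx^{-\alpha}$, this is the asserted bound; note that, in contrast with the $\alpha>2$ case, here the single-jump term $X$ alone produces the dominant contribution.

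The main obstacle is the first step: establishing the $\alpha$-stable, sample-path uniform large-deviation upper bound for the mean-zero regularly varying random walk --- the infinite-variance analogue of Lemma \ref{LEM-MAX-ABS-SUM-ASYMP}. Once that is in hand, the rest is the same bookkeeping as in Part 3.a), now with explicit powers of $b$ in place of the slowly varying interplay there; and, as noted, because the stable and the $m\bar B(x)$ contributions coincide in order for $\alpha<2$, the cruder bound $\Pr\{\max_{0\le k\le m}|S_k|>x\}\le Cm\bar B(x)$ on $x\ge m^{1/\alpha}$ already suffices to push the argument through.
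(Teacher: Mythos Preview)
Your proposal is correct and follows essentially the same route as the paper: split $B_3(b)$ into the $X$-factor term and the $\max|S_n|$-factor term, bound each via a uniform large-deviation estimate for $\max_{k\le m}|S_k|$ in the $\alpha$-stable regime, and check that the second term is $o(b^{\alpha}\bar B(b^{\alpha})/\bar B(b))$ because $(\alpha-1)(\alpha-2)<0$.

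A few executional differences are worth flagging. The paper states the key uniform bound (Lemma \ref{LEM-MAX-SUM-ASYMP-INF-VAR}) as the single inequality $\Pr\{\max_{0\le n\le m}|S_n|>x\}\le 3\,\Pr\{Z_*>x/(cm)^{1/\alpha}\}$, valid for \emph{all} $x$ once $m\ge m_0$, and cites it as a special case of Theorem 3.8.2 of \cite{MR2424161} rather than deriving it from \cite{doi:10.1137/1126006}; since $\Pr\{Z_*>y\}\sim y^{-\alpha}$ already matches $m\bar B(x)$ in order (your own observation), no restriction $x\ge m^{1/\alpha}$ is needed, and consequently the paper never splits the $t$-range at $t^*$. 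For the $X$-term the paper instead Fubini-rewrites
\[
\E\!\left[\Pr\{Z_*>\bar c b/(2cX)^{1/\alpha}\}\,X\,I(X>b\delta_+)\right]
=\E\!\left[X\,I\!\left(X>(b\delta_+)\vee(\bar c b/Z_*)^{\alpha}\right)\right],
\]
applies Karamata to $\E[XI(X>y)]=O(y^{1-\alpha})$, and finishes with $\E[Z_*^{\alpha^2-\alpha}]<\infty$. This is slightly tidier than your $t\le t^*$ versus $t>t^*$ computation and avoids the Doob/von Bahr--Esseen detour for the large-$t$ piece, but both approaches land on the same orders ($b^{2\alpha-\alpha^2}$ for the dominant term, $b^{2-\alpha}$ for the subdominant one).
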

\noindent We begin with a uniform convergence result which is a
special case of Theorem 3.8.2 of \cite{MR2424161}:
\begin{lemma}
  \label{LEM-MAX-SUM-ASYMP-INF-VAR}
  Suppose that $\bar{B}\left( x\right) \sim cx^{-\alpha }$ as
  $x\rightarrow \infty $ for some $c>0$ and
  $\alpha \in \left( 1,2\right) $. Also, suppose that Assumption
  \ref{ASSUMP_LEFT_TAIL} holds.Then, there exists a positive integer
  $m_0$ such that for all $m \geq m_0,$
\begin{equation*}
  \mathbb{P}\left\{ \max_{0\leq n\leq m}\left\vert S_{n}\right\vert >x\right\}
  \leq 3 \mathbb{P}\left\{ Z_*>\frac{x}{
      (cm)^{1/\alpha }}\right\},
\end{equation*}
where $Z_* := \max_{0 \leq s \leq 1}Z(s) $ is the maximum of an
$\alpha $-stable process $(Z(t): 0 \leq t \leq 1)$ satisfying
$\Pr\{ Z(1) > x\} \sim x^{-\alpha}$ as $x \rightarrow \infty.$
Additionally, for such a stable process $Z(\cdot),$ we have that
\[\Pr\{ Z_* > x\} \sim x^{-\alpha} \text{ as } x \rightarrow \infty.\]
\end{lemma}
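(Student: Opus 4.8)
\noindent The plan is to obtain Lemma~\ref{LEM-MAX-SUM-ASYMP-INF-VAR} as an instance of Theorem~3.8.2 of \cite{MR2424161}, the uniform sample-path large deviation principle for random walks whose increments lie in the domain of attraction of a non-Gaussian stable law. The work then splits into three pieces: (i) verifying that $X = V-T$ meets the hypotheses of that theorem and identifying the norming sequence as $a_m := (cm)^{1/\alpha}$; (ii) upgrading the one-sided statement it provides to the two-sided running maximum $\max_{0\le n\le m}|S_n|$ and patching the range of small $x$; and (iii) recording the tail of $Z_*$ via the single-big-jump principle.

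For (i): since $\rho = 1$ we have $\E X = \E V - \E T = 0$; by \eqref{tail_equiv_F_B} together with the hypothesis $\bar{B}(x)\sim cx^{-\alpha}$, the right tail obeys $\Pr\{X>x\}\sim cx^{-\alpha}$; and since $V\ge 0$, Assumption~\ref{ASSUMP_LEFT_TAIL} gives $\Pr\{X<-x\}\le \Pr\{T>x\} = o(x^{-\alpha})$. Hence $\Pr\{|X|>x\}\sim cx^{-\alpha}$ is regularly varying of index $-\alpha$ with essentially all of the mass on the right, so $X$ lies in the domain of attraction of a mean-zero, totally right-skewed (spectrally positive) $\alpha$-stable law, and $S_{[m\,\cdot\,]}/a_m \Rightarrow Z(\cdot)$ in $D[0,1]$ for an $\alpha$-stable Lévy process $Z$. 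The choice $a_m = (cm)^{1/\alpha}$ is precisely the one for which $\Pr\{Z(1)>x\}\sim x^{-\alpha}$, since $\Pr\{S_m > a_m y\}\approx m\Pr\{X> a_m y\}\approx m c a_m^{-\alpha}y^{-\alpha} = y^{-\alpha}$.

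For (ii): writing $\overline S_m := \max_{0\le n\le m}S_n$, Theorem~3.8.2 of \cite{MR2424161} gives, for each fixed $\ve>0$, the uniform equivalence $\Pr\{\overline S_m > x\} \sim \Pr\{Z_* > x/a_m\}$ over $x\ge \ve a_m$ as $m\to\infty$ (a region that already straddles the fluctuation scale $x\asymp a_m$ and the large-deviation scale $x\gg a_m$, where both sides are $\sim cmx^{-\alpha}$), and, applied to the walk $-S$, the analogous equivalence for $\max_{0\le n\le m}(-S_n)$ with $\sup_{0\le s\le 1}(-Z(s))$ on the right. Since $-Z$ is spectrally negative, $\sup_{0\le s\le 1}(-Z(s))$ has a Weibull-type tail lighter than any power, so the downward contribution only inflates the bound by an absolute bounded factor; combining this with the identity $\max_{0\le n\le m}|S_n| = \max\bigl(\overline S_m,\ \max_{0\le n\le m}(-S_n)\bigr)$ and fixing $\ve$ small and $m_0=m_0(\ve)$ large yields $\Pr\{\max_{0\le n\le m}|S_n|>x\}\le 3\,\Pr\{Z_*>x/a_m\}$ for all $x\ge \ve a_m$, $m\ge m_0$. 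For $0\le x<\ve a_m$ the inequality is trivial: $\Pr\{\max_{0\le n\le m}|S_n|>x\}\le 1$, while $\Pr\{Z_*>x/a_m\}\ge\Pr\{Z_*>\ve/c^{1/\alpha}\}$, and since $Z_*>0$ almost surely, $\Pr\{Z_*>y\}\to 1$ as $y\downarrow 0$; shrinking $\ve$ further so that $\Pr\{Z_*>\ve/c^{1/\alpha}\}\ge 1/3$ makes the bound hold there as well. For (iii), the statement $\Pr\{Z_*>x\}\sim x^{-\alpha}$ is the single-big-jump principle for the supremum of a Lévy process over a unit interval whose Lévy measure has a regularly varying (hence subexponential) tail: $\Pr\{Z_*>x\}\sim \overline\Pi(x)\sim \Pr\{Z(1)>x\}\sim x^{-\alpha}$; alternatively it follows by letting $m\to\infty$ in (ii) and comparing with the elementary estimate $\Pr\{\overline S_m\ge x\}\sim m\Pr\{X>x\}$ valid for $x\gg a_m$, which forces $\Pr\{Z_*\ge y\}\sim y^{-\alpha}$ after substituting $y = x/a_m$.

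The part I expect to require the most care is the bookkeeping in piece (ii): matching the norming $a_m=(cm)^{1/\alpha}$ against the precise normalization and uniformity range in Theorem~3.8.2, and verifying that replacing the two-sided maximum $\max_{0\le n\le m}|S_n|$ by the one-sided $Z_*$ costs only a factor absorbable into the constant $3$. Here Assumption~\ref{ASSUMP_LEFT_TAIL} is essential: it is exactly what forces the negative excursions of the limit process to be light-tailed, so that the downward running maximum is dominated by the upward one at every relevant scale. The remaining steps are routine manipulations with regularly varying functions of the kind already used in Part 3.a).
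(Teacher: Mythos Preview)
Your proposal is correct and mirrors the paper's approach: the paper simply declares the lemma a special case of Theorem~3.8.2 of \cite{MR2424161} and remarks that passing from $\max S_n$ to $\max|S_n|$ is handled as in the proof of Lemma~\ref{LEM-MAX-ABS-SUM-ASYMP} (i.e., apply the one-sided result to $S$ and to $-S$, use Assumption~\ref{ASSUMP_LEFT_TAIL} to bound the downward contribution, then take the union). You have spelled out exactly this, together with the norming identification $a_m=(cm)^{1/\alpha}$, the small-$x$ patch, and the $\Pr\{Z_*>x\}\sim x^{-\alpha}$ asymptotic that the paper states without proof.
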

\noindent The adaptation of Theorem 3.8.2 of \cite{MR2424161} to the
case where maximum of $\vert S_n \vert$ appears (instead of maximum of
$S_n$) is similar to the argument in the proof of Lemma
\ref{LEM-MAX-ABS-SUM-ASYMP} in Part 3.a), and therefore is
omitted. The dominant contribution to $B_3(b)$ is accounted for in
the following result:

\begin{lemma}
  \label{LEM-INF-VAR-TERM1}
  Suppose that $\bar{B}\left( x\right) \sim cx^{-\alpha }$ as
  $x\rightarrow \infty $ for some $c>0$ and
  $\alpha \in \left( 1,2\right) $. Also, suppose that Assumption
  \ref{ASSUMP_LEFT_TAIL} holds. Then,
\begin{equation*}
  \mathbb{E}\left[ I\left( \max_{0\leq k\leq N_A(X)+1}2\left\vert S_{n}\right\vert
      >\left( \delta -\delta _{-}\right) b\right) X \  \left \vert
        \frac{}{} \right. X>b\delta_+ \right]
    =O\left( \frac{b^{\alpha }\bar{B}\left( b^{\alpha
          }\right)}{\bar{B}(b)} \right). 
\end{equation*}
\end{lemma}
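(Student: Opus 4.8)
The plan is to follow the template of the proof of Lemma \ref{Lem_No_Run_Max_Mult} in Part 3.a), replacing the mixed Gaussian/large‑jump estimate of Lemma \ref{LEM-MAX-ABS-SUM-ASYMP} by the $\alpha$-stable uniform estimate of Lemma \ref{LEM-MAX-SUM-ASYMP-INF-VAR}. First I would set $c := (\delta-\delta_-)/2$, so that the indicator event becomes $\{\max_{0\le n\le N_A(X)+1}|S_n| > cb\}$, and split the conditional expectation according to whether $N_A(X)+1\le 2X$ or $N_A(X)+1 > 2X$. The piece on $\{N_A(X)+1 > 2X\}$ is disposed of exactly as in Part 3.a): since $N_A(t)/t\to 1$ almost surely, $\Pr\{N_A(t) > 2t-1\}$ decays exponentially in $t$, so conditioning on $X=t\ge b\delta_+$ and integrating $t\,\Pr\{X\in dt\}$ against it yields $O(e^{-\gamma b})$ for some $\gamma>0$, which is negligible against the claimed bound.

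Next I would treat the main piece. Conditioning on $X=t$, I have $\max_{0\le n\le N_A(X)+1}|S_n|\le\max_{0\le n\le\lceil 2t\rceil}|S_n|$, and since $t\ge b\delta_+\to\infty$ Lemma \ref{LEM-MAX-SUM-ASYMP-INF-VAR} applies with $m=\lceil 2t\rceil$ and gives $\Pr\{\max_{0\le n\le\lceil 2t\rceil}|S_n| > cb\}\le 3\Pr\{Z_* > cb/(c\lceil 2t\rceil)^{1/\alpha}\}$. Using the uniform bound $\Pr\{Z_* > y\}\le C(1\wedge y^{-\alpha})$ — valid because $Z_*$ has a tail asymptotic to $y^{-\alpha}$ and is a probability — this is $O\!\left(1\wedge (t/b^{\alpha})\right)$ uniformly in $t$. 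Hence the main piece is at most
\[\frac{C}{\Pr\{X>b\delta_+\}}\int_{b\delta_+}^\infty t\Big(1\wedge\frac{t}{b^\alpha}\Big)\,\Pr\{X\in dt\}.\]

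Since $\alpha>1$, we have $b\delta_+ < b^\alpha$ for all large $b$, so I would split this integral at $b^\alpha$. On $[b\delta_+,b^\alpha]$ the integrand is $b^{-\alpha}t^2\,\Pr\{X\in dt\}$, and $b^{-\alpha}\,\mathbb{E}[X^2 I(0<X\le b^\alpha)] = O(b^{-\alpha}(b^\alpha)^2\bar{B}(b^\alpha)) = O(b^\alpha\bar{B}(b^\alpha))$ by Karamata's theorem \eqref{KARAMATA-II} (truncated second moment of a tail regularly varying with index $\alpha\in(1,2)$, so $\alpha-\beta<1$ with $\beta=1$). On $[b^\alpha,\infty)$ the integrand is $t\,\Pr\{X\in dt\}$, and $\mathbb{E}[X I(X>b^\alpha)] = b^\alpha\bar{B}(b^\alpha)(1+o(1)) + \int_{b^\alpha}^\infty\Pr\{X>u\}\,du = O(b^\alpha\bar{B}(b^\alpha))$ by Karamata's theorem \eqref{KARAMATA} with $\beta=0$. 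Dividing by $\Pr\{X>b\delta_+\} = \Theta(\bar{B}(b))$ (regular variation together with \eqref{tail_equiv_F_B}) produces the claimed $O\!\left(b^\alpha\bar{B}(b^\alpha)/\bar{B}(b)\right)$.

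I do not expect a genuine analytic obstacle: the content reduces to the split at $t=b^\alpha$ and the two directions of Karamata's theorem, both routine. The one point worth flagging is structural rather than technical: unlike Part 3.a), where one must separately handle a diffusive scaling region (via the Gaussian term) and a single‑large‑jump region, the stable estimate of Lemma \ref{LEM-MAX-SUM-ASYMP-INF-VAR} is uniform over all thresholds, so those two regions merge into one estimate; the split at $t=b^\alpha$ that survives is merely the crossover of the elementary bound $1\wedge(t/b^\alpha)$, not a change of approximation. This is also why the final answer features the single term $\bar{B}(b^\alpha)$ rather than the two competing terms $\bar{B}(b)$ and $\bar{B}(b^2)/\bar{B}(b)$ of the $\alpha>2$ case.
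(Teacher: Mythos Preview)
Your proof is correct and follows the same overall template as the paper's: the same split on $\{N_A(X)+1\le 2X\}$ versus its complement, the same use of Lemma~\ref{LEM-MAX-SUM-ASYMP-INF-VAR} for the main piece, and the same exponential bound for the large-deviations piece. The only difference is in how the resulting integral is evaluated. The paper applies Fubini to swap the order of $X$ and $Z_*$, writing the main term as $\E\bigl[X\,I(X>(b\delta_+)\vee(\bar c b/Z_*)^\alpha)\bigr]$, then uses $\E[X\,I(X>x)]=O(x^{-(\alpha-1)})$ and splits according to which of $b\delta_+$ or $(\bar c b/Z_*)^\alpha$ is larger; closing this route requires the moment bound $\E[Z_*^{\alpha^2-\alpha}]<\infty$. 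You instead replace $\Pr\{Z_*>y\}$ by the pointwise bound $C(1\wedge y^{-\alpha})$ and split the $X$-integral at $t=b^\alpha$, which sidesteps the need to verify any moment of $Z_*$ and reduces everything to two direct applications of Karamata's theorem. Both routes are short and valid; yours is slightly more elementary, while the paper's Fubini manoeuvre makes the role of the stable fluctuations a bit more visible.
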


\begin{proof}[Proof of Lemma \ref{LEM-INF-VAR-TERM1}]
  As a consequence of Lemma \ref{LEM-MAX-SUM-ASYMP-INF-VAR}, we get
\begin{align}
  \mathbb{E}\left[ I\left( \max_{0\leq k\leq 2X}2\left\vert
  S_{n}\right\vert 
  >\left( \delta -\delta _{-}\right) b\right) X I\left( 
  X>b\delta_+\right) \right] &\leq 3\E\left[ \Pr\left\{ Z_* >
                               \frac{(\delta-\delta_-)b}{2\left(2cX\right)^{\frac{1}{\alpha}}}\right\} 
                               X I\left( X > b\delta_+\right)\right]
                               \nonumber\\ 
                             & = 3\E\left[ X I\left( X > \left(b\delta_+\right) \vee
                               \left(\frac{\bar{c} b}{Z_*}\right)^{\alpha}\right)\right] 
                               \label{INTER-INF-VAR-TERM1}
\end{align}
where $\bar{c} := (\delta-\delta_-)/(2^{\alpha+1}c)^{1/\alpha}.$
Additionally, for all large enough $x,$ there exists a constant $C$
such that $\E[XI(X > x)] \leq Cx^{-(\alpha-1)},$ because of Karamata's
theorem and the observation that $\Pr\{ X > x\} \sim cx^{-\alpha}$ as
$x \rightarrow \infty.$ Therefore, for all $b$ large enough, we obtain
\begin{align*}
  \E\left[ X I\left( X > \left(b\delta_+\right) \vee
  \left(\frac{\bar{c} b}{Z_*}\right)^{\alpha}\right)\right] &\leq C\E
                                                              \left[ \left(\left(b\delta_+\right) \vee
                                                              \left(\frac{\bar{c}
                                                              b}{Z_*}\right)^{\alpha}\right)^{-(\alpha-1)}\right]\\
                                                            &\leq 
                                                              C\left(b\delta_+\right)^{-(\alpha-1)}\Pr\left\{ Z_* >
                                                              \frac{\bar{c}b^{1-\frac{1}{\alpha}}}{\delta_+^{\frac{1}{\alpha}}}
                                                              \right\}
                                                              + C
                                                              \left(
                                                              \bar{c}
                                                              b\right)^{-\alpha(\alpha-1)}
                                                              \E\left[
                                                              Z_*^{\alpha^2-\alpha}\right] ,
\end{align*}
which, in turn, is $O(b^\alpha\bar{B}(b^\alpha))$ because
$\E[Z_*^{\alpha^2-\alpha}] < \infty$ when $\alpha \in (1,2).$
Therefore, due to \eqref{INTER-INF-VAR-TERM1},
\begin{align*}
  \mathbb{E}\left[ I\left( \max_{0\leq k\leq 2X}2\left\vert
  S_{n}\right\vert 
  >\left( \delta -\delta _{-}\right) b, \ N_A(X) + 1 \leq 2X \right) X I\left( 
  X>b\delta_+\right) \right] = O\left( b^{\alpha}\bar{B}\left(
  b^\alpha \right)\right).  
\end{align*}
On the other hand, the event $\{ N_A(b) + 1 > 2b\}$ is a large
deviations event with probabilities exponentially decaying in $b,$ and
as argued in the proof of Lemma \ref{Lem_No_Run_Max_Mult},
\begin{align*}
  \mathbb{E}\left[ I\left( \max_{0\leq k\leq 2X}2\left\vert
  S_{n}\right\vert 
  >\left( \delta -\delta _{-}\right) b, \ N_A(X) + 1 > 2X \right) X I\left( 
  X>b\delta_+\right) \right] = O\left( \exp(-\gamma b)\right),
\end{align*}
for a suitable $\gamma > 0.$ These two observations, after adjusting
for the conditioning by dividing by
$\Pr\{ X > b\delta_+\} = \Theta(\bar{B}(b)),$ prove Lemma
\ref{LEM-INF-VAR-TERM1}.
\end{proof}

\begin{lemma}
  \label{LEM-INF-VAR-TERM2}
  Suppose that $\bar{B}\left( x\right) \sim cx^{-\alpha }$ as
  $x\rightarrow \infty $ for some $c>0$ and
  $\alpha \in \left( 1,2\right) $. Also, suppose that Assumption
  \ref{ASSUMP_LEFT_TAIL} holds. Then,
\begin{equation*}
  \mathbb{E}\left[ I\left( \max_{0\leq k\leq N_A(X)+1}2\left\vert S_{n}\right\vert
      >\left( \delta -\delta _{-}\right) b\right)  \max_{0\leq k\leq
      N_A(X)+1} \left\vert S_{n}\right\vert \  \left \vert
        \frac{}{} \right. X>b\delta_+ \right]
    =O\left( b^2\bar{B}\left( b \right) \right). 
\end{equation*}
\end{lemma}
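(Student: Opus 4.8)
The plan is to follow the proof of Lemma \ref{LEM-INF-VAR-TERM1} almost verbatim; the one genuinely new feature is that the running maximum $\max_{0\le n\le N_A(X)+1}|S_n|$ now appears as a multiplicative factor \emph{in addition} to sitting inside the indicator, so that instead of a probability $\Pr\{M>a\}$ one must estimate an expectation of the form $\E[M\,I(M>a)]$ for a nonnegative $M$. I would handle this via the elementary identity $\E[M\,I(M>a)]=a\,\Pr\{M>a\}+\int_a^\infty\Pr\{M>u\}\,du$, which reduces everything to tail bounds.

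First I would discard the atypical-arrivals event. Conditionally on $X=t$, exactly as in the proofs of Lemmas \ref{Lem_No_Run_Max_Mult} and \ref{LEM-INF-VAR-TERM1}, the event $\{N_A(X)+1>2X\}$ is a large-deviation event for the renewal counting process, with probability decaying exponentially in $t$; since on our event $X>b\delta_+$, its total contribution is $O(\exp(-\gamma b))$ for some $\gamma>0$, negligible relative to $b^2\bar{B}(b)$. On the complementary event, after conditioning on $X=t$, I may replace $\max_{0\le n\le N_A(X)+1}|S_n|$ by $M_t:=\max_{0\le n\le\lceil 2t\rceil}|S_n|$.

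Write $a:=(\delta-\delta_-)b/2$. Combining the identity above with Lemma \ref{LEM-MAX-SUM-ASYMP-INF-VAR} --- which for $t$ large gives $\Pr\{M_t>u\}\le 3\,\Pr\{Z_*>u/\sigma_t\}$ with $\sigma_t:=(c\lceil 2t\rceil)^{1/\alpha}=O(t^{1/\alpha})$, uniformly in $u\ge0$ --- and applying the same identity to $Z_*$ yields
\[
 \E\bigl[M_t\,I(M_t>a)\bigr]\;\le\;3\sigma_t\,\E\bigl[Z_*\,I(Z_*>a/\sigma_t)\bigr].
\]
Since $\Pr\{Z_*>w\}\sim w^{-\alpha}$ with $\alpha\in(1,2)$, Karamata's theorem \eqref{KARAMATA} gives a constant $C$ with $\E[Z_*\,I(Z_*>w)]\le C\,(1\wedge w^{-(\alpha-1)})$ for all $w>0$, hence $\E[M_t\,I(M_t>a)]\le 3C\,\sigma_t\bigl(1\wedge(a/\sigma_t)^{-(\alpha-1)}\bigr)$. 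I would then integrate this bound over $t\ge b\delta_+$ against $\Pr\{X\in dt\}$, splitting at the threshold $t^*=\Theta(b^\alpha)$ at which $\sigma_{t^*}\asymp a$ (note $b\delta_+\ll t^*$ because $\alpha>1$). On $[b\delta_+,t^*]$ one has $\sigma_t\le a$, so $\E[M_t\,I(M_t>a)]\le 3C\,a^{1-\alpha}\sigma_t^\alpha=O(a^{1-\alpha}t)$, contributing $O\bigl(a^{1-\alpha}\,\E[X\,I(X>b\delta_+)]\bigr)=O\bigl(b^{2-\alpha}\bar{B}(b)\bigr)=O(b^2\bar{B}^2(b))$ by Karamata's theorem; on $(t^*,\infty)$ one has $\E[M_t\,I(M_t>a)]\le 3C\,\sigma_t=O(t^{1/\alpha})$, contributing $O\bigl(\E[X^{1/\alpha}\,I(X>t^*)]\bigr)=O\bigl((t^*)^{1/\alpha-\alpha}\bigr)=O(b^{1-\alpha^2})$, again by Karamata's theorem, and $b^{1-\alpha^2}=O(b^{2-2\alpha})$ because $(\alpha-1)^2\ge0$.

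Adding the two pieces gives the unconditional bound $O(b^2\bar{B}^2(b))$; dividing by $\Pr\{X>b\delta_+\}=\Theta(\bar{B}(b))$ (regular variation) converts this into $O(b^2\bar{B}(b))$, the claimed estimate. There is no deep obstacle --- the argument is routine once Lemmas \ref{LEM-INF-VAR-TERM1} and \ref{LEM-MAX-SUM-ASYMP-INF-VAR} are in hand --- but two points need care: (i) organizing the two-regime split at $t^*\asymp b^\alpha$ so that \emph{both} regimes land at or below the target order $b^2\bar{B}^2(b)$, which is exactly where the inequality $(\alpha-1)^2\ge0$ is used; and (ii), the step I expect to require the most thought, controlling the running-maximum factor on the atypical event $\{N_A(X)+1>2X\}$ --- unlike in Lemma \ref{LEM-INF-VAR-TERM1}, where that factor was merely $X$, here one needs a crude moment bound such as $\E[\max_{0\le n\le m}|S_n|^q]=O(m^{q/\alpha})$ for some $q\in(1,\alpha)$ to check that the exponentially small probability of that event still swamps its contribution.
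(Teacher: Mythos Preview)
Your approach is correct and essentially matches the paper's. Both proofs use the identity $\E[M\,I(M>a)]=a\Pr\{M>a\}+\int_a^\infty\Pr\{M>u\}\,du$, then apply Lemma~\ref{LEM-MAX-SUM-ASYMP-INF-VAR}, and handle the event $\{N_A(X)+1>2X\}$ separately.

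Two minor differences are worth noting. First, where you condition on $X=t$, bound $\E[Z_*I(Z_*>a/\sigma_t)]$, and split the $t$-integral at $t^*\asymp b^\alpha$, the paper instead applies Fubini to swap the roles of $X$ and $Z_*$, writing each term as an expectation over $Z_*$ of $\Pr\{X>(\bar{c}b/Z_*)^\alpha/(2c)\}/\Pr\{X>b\delta_+\}\wedge 1$ and invoking Karamata (\ref{KARAMATA-II}); the arithmetic is equivalent. Second, the step you flag as requiring the most thought---controlling the running maximum on $\{N_A(X)+1>2X\}$---is in fact not new: the paper simply points to Lemma~\ref{LEM-B32-II}, whose crude bound $\max_{n\le N_A(t)+1}|S_n|\le\sum_{n\le N_A(t)+1}(V_{n-1}+T_n)$ and Wald-type estimate depend only on $\E V,\E T<\infty$, not on $\alpha>2$, so it applies verbatim here. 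Your proposed route via $L^q$ moments of the running maximum would also work but is unnecessary.
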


\noindent As in Part 3.a), the proof of Lemma \ref{LEM-INF-VAR-TERM2}
is furnished in Appendix \ref{APP-OTHERS}. The main result of this
section, Lemma \ref{LEM-SIMP-INF-VAR}, which aims to prove that
$B_3(b) = O(b^\alpha\bar{B}(b^\alpha)/\bar{B}(b))$ is an immediate
consequence of Lemmas \ref{LEM-INF-VAR-TERM1} and
\ref{LEM-INF-VAR-TERM2}, along with the observation that
$b^2\bar{B}^2(b) = o(b^\alpha\bar{B}(b^\alpha))$ when $\alpha < 2.$

\subsection*{Part 4) Estimation of $B_2(b)$} 
The objective of this subsection is to prove Lemma \ref{Lem_est_B1},
and subsequently, complete the proof of Proposition \ref{PROP-UB}.
\begin{lemma}
\label{Lem_est_B1}
Suppose that Assumptions \ref{ASSUMP-DIST-V} and
\ref{ASSUMP_LEFT_TAIL} hold, and that $\rho =1$. Then,
\begin{equation*}
B_{2}\left( b\right) =O\left( b^{2}\bar{B}\left( b\right) ^{2}\right).
\end{equation*}
\end{lemma}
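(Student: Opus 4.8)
The plan has two steps, mirroring the outline of Part 4.

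\medskip

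\noindent\emph{Step 1: reduction to a restarted cycle.} On the event $\{\bar{\tau}_{b\delta_+}^{(2)}\le\tau_{b\delta}^{(1)}\}$ one has $W_k^{(1)}\le b\delta<b$ for every $k<\bar{\tau}_{b\delta_+}^{(2)}$ (since $\tau_{b\delta}^{(1)}\ge\bar{\tau}_{b\delta_+}^{(2)}$), so all of $\sum_{k<\tau_0}I(W_k^{(1)}>b)$ is collected at times $k\ge\bar{\tau}_{b\delta_+}^{(2)}$; moreover $\bar{\tau}_{b\delta_+}^{(2)}$ falls inside the regeneration cycle only when $\tau_{b\delta_-}^{(2)}<\tau_0$, in which case $W_{\bar{\tau}_{b\delta_+}^{(2)}}^{(1)}\le W_{\bar{\tau}_{b\delta_+}^{(2)}}^{(2)}\le b\delta_+$. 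Applying the strong Markov property at $\bar{\tau}_{b\delta_+}^{(2)}$ then gives
\[ B_2(b)\ \le\ \Pr_{\bf 0}\!\left\{\tau_{b\delta_-}^{(2)}<\tau_0\right\}\ \sup_{0\le w_1\le w_2\le b\delta_+}\E_{(w_1,w_2)}\!\left[\sum_{k=0}^{\tau_0-1}I\!\left(W_k^{(1)}>b\right)\right]. \]
To bound the supremum, note that $W_k^{(1)}>b$ forces $\tau_b^{(2)}\le k$ while $W_k^{(1)}\le W_k^{(2)}<b$ for $k<\tau_b^{(2)}$, so the inner sum is at most $I(\tau_b^{(2)}<\tau_0)(\tau_0-\tau_b^{(2)})$; restarting at $\tau_b^{(2)}$ and applying Lemma \ref{Lem_LB_ET0}, then Lemma \ref{LEM_ST_DOM} (which dominates the overshoot $W_{\tau_b^{(2)}}^{(2)}$, given $\tau_b^{(2)}<\tau_0$, by $X+b$ given $X>b$), and then Karamata's theorem together with \eqref{tail_equiv_F_B} (which give $\E[X+b\mid X>b]=O(b)$), we obtain
\[ \E_{(w_1,w_2)}\!\left[\sum_{k=0}^{\tau_0-1}I\!\left(W_k^{(1)}>b\right)\right]=O\!\left(b\,\Pr_{(w_1,w_2)}\!\left\{\tau_b^{(2)}<\tau_0\right\}\right). \]

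\noindent\emph{Step 2: a uniform first-passage estimate.} It then suffices to show that, for every initial state with $w_1\le w_2<b$,
\[ \Pr_{(w_1,w_2)}\!\left\{\tau_b^{(2)}<\tau_0\right\}=O\!\left(\E_{(w_1,w_2)}[\tau_0]\,\bar{B}(b)\right): \]
combined with $\E_{\bf w}[\tau_0]=O(b\delta_+)$ (Lemma \ref{Lem_LB_ET0}) this bounds the supremum in Step 1 by $O(b\bar{B}(b))$, and the same estimate applied with $b\delta_-$ in place of $b$ and ${\bf w}={\bf 0}$ (where $\E_{\bf 0}[\tau_0]=O(1)$) gives $\Pr_{\bf 0}\{\tau_{b\delta_-}^{(2)}<\tau_0\}=O(\bar{B}(b))$, so altogether $B_2(b)=O\big(\bar{B}(b)\cdot b\cdot b\bar{B}(b)\big)=O(b^2\bar{B}^2(b))$. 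To prove the estimate I would iterate recursion \eqref{MAX-REC} to get $W_k^{(2)}\le w_2\vee\max_{1\le j\le k}(W_{j-1}^{(1)}+X_j)$, combine it with Lemma \ref{LEM_SINGLE_Q_DOM} ($W_{j-1}^{(1)}\le 2\max_{i\le j-1}|S_i|+w_1$) and $w_1,w_2<b$, and thereby get the inclusion $\{\tau_b^{(2)}<\tau_0\}\subseteq\{\max_{0\le i<\tau_0}|S_i|>cb\}$ for a constant $c=c(\delta_+)>0$. Splitting the cycle at a large deterministic horizon $Kb$: on $\{\tau_0\le Kb\}$ the uniform large-deviation estimate of Lemma \ref{LEM-MAX-ABS-SUM-ASYMP} bounds $\Pr\{\max_{0\le i\le Kb}|S_i|>cb\}$ by a Gaussian tail plus $3Kb\,\Pr\{|X|>cb\}$, which is of order $b\bar{B}(b)$; and $\Pr_{\bf w}\{\tau_0>Kb\}\le\Pr_{\bf w}\{\exists\,j<\tau_0:V_j>\varepsilon b\}+\Pr_{\bf w}\{\tau_0>Kb,\ \max_{j<\tau_0}V_j\le\varepsilon b\}$, the first term being $O(\E_{\bf w}[\tau_0]\bar{B}(b))$ by a Wald identity (using $V_j\perp\{\tau_0>j\}$ and $\bar{B}(\varepsilon b)=\Theta(\bar{B}(b))$) and the second being $e^{-\Omega(b)}$ by a standard exponential Lyapunov argument for the queue with service times capped at $\varepsilon b$, whose negative drift is supplied by Lemma \ref{LEM-LYAP-ET0}. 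The case $\alpha\in(1,2)$ is entirely parallel, with the $\alpha$-stable uniform estimate of Lemma \ref{LEM-MAX-SUM-ASYMP-INF-VAR} replacing Lemma \ref{LEM-MAX-ABS-SUM-ASYMP}.

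\noindent The step I expect to be the main obstacle is the uniform first-passage estimate of Step 2, and within it the passage from the deterministic-horizon large-deviation bound for $\max_{i\le Kb}|S_i|$ to the bound over the whole excursion: this forces one to control the cycle length $\tau_0$, which is itself heavy-tailed since a single large job inflates $\tau_0$ by roughly its own size, and the Wald-plus-truncation device above is exactly what isolates that contribution. Everything else is routine bookkeeping with the strong Markov property, the domination Lemmas \ref{Lem_LB_ET0}, \ref{LEM_ST_DOM}, \ref{LEM_SINGLE_Q_DOM}, and Karamata's theorem.
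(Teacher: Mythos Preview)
Your Step 1 is correct and coincides with the paper's reduction: strong Markov at $\bar\tau_{b\delta_+}^{(2)}$, then at $\tau_b^{(2)}$, followed by Lemma~\ref{Lem_LB_ET0}, Lemma~\ref{LEM_ST_DOM} and Karamata to get $\E_{\bf w}[\sum I(W_k^{(1)}>b)]=O\big(b\,\Pr_{\bf w}\{\tau_b^{(2)}<\tau_0\}\big)$.

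The gap is in Step 2, and it lies exactly where you flag the main obstacle. You claim $\Pr_{\bf w}\{\tau_b^{(2)}<\tau_0\}=O(\E_{\bf w}[\tau_0]\,\bar B(b))$ and need it in \emph{two} regimes: for $w_2\le b\delta_+$ (where $\E_{\bf w}[\tau_0]=O(b)$) and for ${\bf w}={\bf 0}$ (where $\E_{\bf 0}[\tau_0]=O(1)$). But your argument via the inclusion $\{\tau_b^{(2)}<\tau_0\}\subseteq\{\max_{i<\tau_0}|S_i|>cb\}$ and the deterministic horizon $Kb$ yields, on the piece $\{\tau_0\le Kb\}$, the bound $\Pr\{\max_{i\le Kb}|S_i|>cb\}=O(Kb\,\bar B(b))$ \emph{regardless of the initial condition}. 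So the sketch only delivers $\Pr_{\bf w}\{\tau_b^{(2)}<\tau_0\}=O(b\,\bar B(b))$, not $O(\E_{\bf w}[\tau_0]\,\bar B(b))$. At ${\bf w}={\bf 0}$ this is off by a factor of $b$, and feeding it back gives $B_2(b)=O(b^3\bar B^2(b))$ rather than $O(b^2\bar B^2(b))$. The Wald piece of your split does scale like $\E_{\bf w}[\tau_0]\bar B(b)$, but it cannot compensate for the horizon piece.

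There is also a secondary issue: the claim that $\Pr_{\bf w}\{\tau_0>Kb,\ \max_j V_j\le\varepsilon b\}=e^{-\Omega(b)}$ via ``a standard exponential Lyapunov argument'' is not routine. The capped increments of $W^{(1)}+W^{(2)}$ are bounded above only by $\varepsilon b$, so the increment MGF is controlled only for $\theta=O(1/b)$; both the exponential-supermartingale bound and Freedman's inequality then produce a bound of order $e^{-O(1)}$, not $e^{-\Omega(b)}$. Since you are using this piece to absorb $\Pr_{\bf w}\{\tau_0>Kb\}$ (not merely its intersection with $\{\tau_b^{(2)}<\tau_0\}$), a constant bound there makes the whole estimate trivial.

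The paper sidesteps both issues by proving the sharp first-passage estimate $\Pr_{\bf w}\{\tau_b^{(2)}<\tau_0\}\le C(w_2+1)\bar B(b)$ directly (Lemma~\ref{LEM_LB_PROB}), via a change-of-measure Lyapunov inequality with Lyapunov function built from the integrated tail $h_b(l)=\int_{b-l}^{b+\kappa_0}\Pr\{X>u\}\,du$ of $X$ evaluated at $l=w_1+w_2$. That construction is precisely what encodes the correct dependence on the initial state; the horizon-splitting you propose discards that dependence at the very first step.
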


\noindent It follows from the definition of $B_2(b)$ that
\begin{align*}
  B_2(b) &= \E_{\bf 0}\left[ \sum_{k=0}^{\tau_0-1} I\left( W_k^{(1)} >
           b\right) I \left( \bar{\tau}_{b\delta_+}^{(2)} \leq
           \tau_{b\delta}^{(1)}, \tau_{b\delta_{-}}^{(2)} <
           \tau_0\right)\right]\\
         &= \E_{\bf 0}\left[ I \left( \bar{\tau}_{b\delta_+}^{(2)} 
           \leq \tau_{b\delta}^{(1)}, \tau_{b\delta_{-}}^{(2)} <
           \tau_0\right)   \E_{\bf 0} \left[ \sum_{k=\bar{\tau}_{b\delta_+}^{(2)}}^{\tau_0-1}I\left(
           W_k^{(1)} >  b\right) \ \left \vert \frac{}{}
           \right. \mathcal{F}_{\bar{\tau}_{b\delta_+}^{(2)}}
           \right]\right]
           \end{align*}
Then due to the Markov property of ${\bf W},$ we get
\begin{align}
  B_2(b)     &= \E_{\bf 0}\left[ I \left( \bar{\tau}_{b\delta_+}^{(2)}
               \leq \tau_{b\delta}^{(1)}, \tau_{b\delta_{-}}^{(2)} <
               \tau_0\right)   \E_{{\bf W}_{\bar{\tau}_{b\delta_+}^{(2)}}}
               \left[ \sum_{k=0}^{\tau_0-1}I\left( 
               W_k^{(1)} >  b\right)   \right]\right].
               \label{B2-INTER}
\end{align}
\subsection*{Evaluation of inner expectation} Due to a similar
conditioning with respect to $\mathcal{F}_{\tau_b^{(2)}},$ we obtain
\begin{align*}
  \E_{{\bf W}_{\bar{\tau}_{b\delta_+}^{(2)}}}
  \left[ \sum_{k=0}^{\tau_0-1}I\left( 
  W_k^{(1)} >  b\right)   \right] &= \E_{{\bf W}_{\bar{\tau}_{b\delta_+}^{(2)}}}
                                    \left[ I\left( \tau_b^{(2)} <
                                    \tau_0\right)
                                    \E_{{\bf W}_{\tau_b^{(2)}}}\left[\sum_{k=\tau_b^{(2)}}^{\tau_0-1}I\left(  
                                    W_k^{(1)} >  b\right)\right]
                                    \right] \\
                                  &\leq \E_{{\bf W}_{\bar{\tau}_{b\delta_+}^{(2)}}}
                                    \left[ I\left( \tau_b^{(2)} <
                                    \tau_0\right) \E_{{\bf
                                    W}_{\tau_b^{(2)}}}\left[ \tau_0
                                    \right] \right],
\end{align*}
which, due to Lemma \ref{Lem_LB_ET0}, admits the following upper
bound: 
\begin{align}
  &\E_{{\bf W}_{\bar{\tau}_{b\delta_+}^{(2)}}}
    \left[ \sum_{k=0}^{\tau_0-1}I\left( 
    W_k^{(1)} >  b\right)   \right] \leq \E_{{\bf W}_{\bar{\tau}_{b\delta_+}^{(2)}}}
    \left[ I\left( \tau_b^{(2)} <
    \tau_0\right) \left(C_1
    W_{\tau_b^{(2)}}^{(2)} + C_0 \right)
    \right] \nonumber\\
  &\quad\quad= C_1 \E_{{\bf W}_{\bar{\tau}_{b\delta_+}^{(2)}}}
    \left[ W_{\tau_b^{(2)}}^{(2)} \
    \left \vert \frac{}{} \right. 
    \tau_{b}^{(2)} <
    \tau_0 \right] \Pr_{{\bf W}_{\bar{\tau}_{b\delta_+}^{(2)}}} \left\{
    \tau_b^{(2)} < \tau_0 \right\} +
    C_0 \Pr_{{\bf
    W}_{\bar{\tau}_{b\delta_+}^{(2)}}}
    \left\{ \tau_b^{(2)} < \tau_0 \right\} \nonumber\\
  &\quad\quad \leq C_2 b  \Pr_{{\bf W}_{\bar{\tau}_{b\delta_+}^{(2)}}} \left\{
    \tau_b^{(2)} < \tau_0 \right\} 
    \label{B2-INTER-2}
\end{align}
for some positive constant $C_2,$ because, due to Lemma
\ref{LEM_ST_DOM},
\begin{align*}
  \E_{{\bf W}_{\bar{\tau}_{b\delta_+}^{(2)}}} \left[
  W_{\tau_b^{(2)}}^{(2)} \ \left \vert \frac{}{}
  \right. \tau_{b}^{(2)} < \tau_0 \right] \leq \E \left[ X + b\
  \left\vert\frac{}{}\right. X > b \right] = O(b).
\end{align*}

\noindent Next, we obtain a bound for
$\Pr_{\bf w} \{ \tau_b^{(2)} < \tau_0\},$ and use it in
\eqref{B2-INTER-2}. 

\begin{lemma}
\label{LEM_LB_PROB}
Suppose that Assumptions \ref{ASSUMP-DIST-V} and
\ref{ASSUMP_LEFT_TAIL} hold, and that $\rho =1$. Let
$\delta _{+}<1/2$, then there exists a constant $C>0$ such that for
all ${\bf w}=\left( w_{1},w_{2}\right) $ satisfying
$w_1 \leq w_{2}<b\delta _{+}$ we have
\begin{equation*}
  \Pr_{\bf w}\left\{ \tau _{b}^{\left( 2\right) }<\tau _{0}\right\} \leq
  C\left( w_{2}+1\right) \bar{B}\left( b\right) .
\end{equation*}
\end{lemma}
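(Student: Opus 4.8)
I would start from the telescoping identity used in the proof of Lemma \ref{LEM_ST_DOM}, specialised to $x=b$. Since $\{\tau_b^{(2)}<\tau_0\}$ forces $W_{\tau_b^{(2)}}^{(2)}>b$, and since by \eqref{MAX-REC} one has $W_k^{(2)}=W_{k-1}^{(1)}+X_k$ on $\{\tau_b^{(2)}=k\}$, the same computation gives
\begin{equation*}
\Pr_{\bf w}\left\{\tau_b^{(2)}<\tau_0\right\}=\sum_{k\ge 1}\E_{\bf w}\!\left[I\!\left(\tau_0>k-1,\ \tau_b^{(2)}>k-1\right)\bar F\!\left(b-W_{k-1}^{(1)}\right)\right],
\end{equation*}
where $\bar F$ is the tail of $X=V-T$. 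The idea is then to estimate $\bar F(b-W_{k-1}^{(1)})$ by peeling off the event that the \emph{smaller} coordinate is already anomalously large: for a fixed $c\in(\delta_+,1)$ one has $\bar F(b-y)\le\bar F((1-c)b)+I(y>cb)$, so the right-hand side is at most
\begin{equation*}
\bar F\!\left((1-c)b\right)\sum_{k\ge 1}\Pr_{\bf w}\{\tau_0>k-1\}\ +\ \E_{\bf w}\!\left[\sum_{j=0}^{(\tau_0\wedge\tau_b^{(2)})-1}I\!\left(W_j^{(1)}>cb\right)\right].
\end{equation*}
(Equivalently, one may first reduce $\{\tau_b^{(2)}<\tau_0\}$ to an exceedance of the total workload $W^{(1)}+W^{(2)}$ above $b$: this is where $\delta_+<1/2$ enters, since it guarantees $w_1+w_2\le 2w_2<b$, so the cycle starts strictly below the level it must reach.)

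The first term is immediate: $\sum_{k\ge1}\Pr_{\bf w}\{\tau_0>k-1\}=\E_{\bf w}[\tau_0]\le C_1w_2+C_0$ by Lemma \ref{Lem_LB_ET0}, while $\bar F((1-c)b)\sim\bar B((1-c)b)=\Theta(\bar B(b))$ by \eqref{tail_equiv_F_B} and Potter's bounds \eqref{POTT_BND}, so this term is $O((w_2+1)\bar B(b))$, which is exactly the claimed order. The work is in the residual occupation-time term $\E_{\bf w}[\sum_{j<\tau_0\wedge\tau_b^{(2)}}I(W_j^{(1)}>cb)]$, which must be shown to be of strictly smaller order. Its meaning is that both servers simultaneously carry $\Theta(b)$ work while the larger one is still below $b$; starting from $w_1\le w_2<b\delta_+$ this demands an anomalous buildup of total workload, which is an essentially ``one (or two) big jump'' event. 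To make this quantitative I would restart at the first time $W^{(1)}$ exceeds $cb$, use the strong Markov property together with $W^{(1)}>cb\Rightarrow\tau_{cb}^{(2)}$ has occurred, and bound the remaining occupation by $\E_{{\bf W}}[\tau_0]$ via Lemma \ref{Lem_LB_ET0}; to avoid the gross loss this produces one further decomposes by the largest service time seen in the cycle (a first-moment bound via the stopping time $\tau_0$ and Lemma \ref{Lem_LB_ET0} for the ``large jump'' part) and, for the complementary ``moderate jumps'' part, invokes the Nagaev-type uniform large-deviation bound of Lemma \ref{LEM-MAX-ABS-SUM-ASYMP} (resp.\ its $\alpha$-stable analogue Lemma \ref{LEM-MAX-SUM-ASYMP-INF-VAR} when $\alpha\in(1,2)$) — but only after the cycle length has been restricted to a sub-$b^2$ range. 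That restriction needs a genuine heavy-tailed upper tail bound $\Pr_{\bf w}\{\tau_0>n\}=O((w_2+1)\bar B(n))$, which I would establish by the same regeneration-plus-single-big-jump argument as for $\E_{\bf w}[\tau_0]$, using the stochastic domination of Lemma \ref{LEM_ST_DOM} and the drift estimate of Lemma \ref{LEM-LYAP-ET0} (a long busy period is essentially caused by one long service time).

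\textbf{Main obstacle.} The crux is the residual term, and in particular the temptation to bound it by reducing through Lemma \ref{LEM_SINGLE_Q_DOM} to the running maximum of the zero-mean walk $S_n$ is too lossy: such a walk reaches level $\Theta(b)$ diffusively over $\Theta(b^2)$ steps, whereas a busy period has only $\E_{\bf w}[\tau_0]=O(b)$ steps in expectation, so Markov's inequality on $\tau_0$ alone costs a power of $b$. Everything therefore hinges on the heavy-tailed upper tail of $\tau_0$ itself (inherited from $\bar B$), not merely its mean; establishing that tail bound and feeding it into the Nagaev estimates is the technical heart of the proof. A secondary subtlety is that for $\alpha\in(1,2)$ one must be careful not to route the argument through an occupation-time bound for $\{W^{(1)}>cb\}$ over the \emph{whole} cycle, because the ``two big jumps'' mechanism contributes of order $w_2b^2\bar B^2(b)$ there — which dominates $(w_2+1)\bar B(b)$ once $\alpha<2$, even though it contributes nothing to the hitting probability $\Pr_{\bf w}\{\tau_b^{(2)}<\tau_0\}$; the restart/strong-Markov reduction above is precisely what prevents that leakage.
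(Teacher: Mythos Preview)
Your approach is genuinely different from the paper's. The paper does \emph{not} attack the residual occupation time at all; instead it constructs an explicit Lyapunov function. Concretely, with $l=w_1+w_2$ it sets
\[
h_b(l)=\int_{b-l}^{b+\kappa_0}\Pr\{X>u\}\,du,\qquad G_b(l)=\min(\kappa_1 h_b(l),1),
\]
builds an auxiliary kernel $Q_\theta$ that mixes the original dynamics with a large-jump-forced transition, and verifies the one-step supermartingale inequality $\E_{\bf w}^\theta[r_\theta({\bf w},{\bf W}_1)G_b(L)]\le G_b(l)$ for $w_2<b$. The bound $\Pr_{\bf w}\{\tau_b^{(2)}<\tau_0\}\le G_b(w_1+w_2)$ then follows from the standard Lyapunov-bound argument of \cite{blanchet2008}, and $G_b(w_1+w_2)\le\kappa_1(l+\kappa_0)\Pr\{X>b-l\}$ gives the claimed order directly. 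No Nagaev estimates, no tail bound on $\tau_0$, no case split on $\alpha$.

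Your telescoping-plus-peeling start is sound and the first term $\bar F((1-c)b)\E_{\bf w}[\tau_0]$ is exactly right. The difficulty is the residual $R=\E_{\bf w}\bigl[\sum_{j<\tau_0\wedge\tau_b^{(2)}}I(W_j^{(1)}>cb)\bigr]$, and here I think there is a real gap. Your strong-Markov restart yields $R\le (C_1b+C_0)\,\Pr_{\bf w}\{\tau_{cb}^{(1)}<\tau_0\wedge\tau_b^{(2)}\}$, so you now need a bound on the probability that the \emph{minimum} coordinate reaches level $\Theta(b)$ while the maximum stays below $b$. That event is strictly harder than the lemma you are proving (it is essentially $P_{\text{2 jumps}}$-level information from the main theorem), and invoking it here is either circular or forward-references much heavier machinery. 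Your further decomposition by ``largest service time in the cycle'' together with the Nagaev bound and the auxiliary tail estimate $\Pr_{\bf w}\{\tau_0>n\}=O((w_2+1)\bar B(n))$ may in principle close this, but none of those pieces is available at this point of the paper, and each is comparable in difficulty to the lemma itself. For $\alpha\in(1,2)$ the issue is sharper: even granting the restart bound, one expects $\Pr_{\bf w}\{\tau_{cb}^{(1)}<\tau_0\}$ to be of order $(w_2+1)b\bar B^2(b)$ at best, giving $R\lesssim (w_2+1)b^2\bar B^2(b)$, which dominates $(w_2+1)\bar B(b)$ when $\alpha<2$---so the ``leakage'' you flag is not actually neutralised by the restart alone.

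The Lyapunov route buys you exactly this: by encoding the target directly into $G_b$ (note $h_b'(l)=\Pr\{X>b-l\}$, so the function is tailored to the one-big-jump heuristic), the supermartingale check is a local, one-step computation and never needs to control $\tau_{cb}^{(1)}$ or the tail of $\tau_0$.
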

\noindent The proof of Lemma \ref{LEM_LB_PROB} is instructive, as it
employs Lyapunov bound techniques to derive the above uniform
bound. The arguments involved are different from the rest of the
paper, and the whole of Appendix \ref{APP-LYAPUNOV} is dedicated to
expose the techniques clearly. Now, we aim to complete the proof of
Lemma \ref{Lem_est_B1}. Due to Lemma \ref{LEM_LB_PROB} and
\eqref{B2-INTER-2},
\begin{align*}
  \E_{{\bf W}_{\bar{\tau}_{b\delta_+}^{(2)}}}
  \left[ \sum_{k=0}^{\tau_0-1}I\left( 
  W_k^{(1)} >  b\right)   \right] \leq C C_2b
  \left( W^{(2)}_{\bar{\tau}_{b\delta_+}^{(2)}} + 1\right) \bar{B}(b)
  \leq CC_2b\left(b\delta_+ + 1\right) \bar{B}(b),
\end{align*}
and therefore, due to \eqref{B2-INTER} and a similar application of
Lemma 
\ref{LEM_LB_PROB} with ${\bf w} = {\bf 0},$
\begin{align*}
  B_2(b) \leq C C_2 \Pr_{\bf 0} \left\{ \tau_{b\delta_{-}}^{(2)} <
  \tau_0\right\}b\left(b\delta_+ + 1\right) \bar{B}(b)  = O\left( b^2
  \bar{B}^2(b) \right).
\end{align*}
This concludes the proof of Lemma \ref{Lem_est_B1}. 

\begin{proof}[Proof of Proposition \ref{PROP-UB}]
  We apply Lemma \ref{LEM_LB_PROB} with ${\bf w} = {\bf 0}$ to the
  bound for $B_1(b)$ in Lemma \ref{LEM_RED_RW} as well. This, due to
  Lemmas \ref{Lem_Simplifcation} and \ref{LEM-SIMP-INF-VAR}, results
  in
  \begin{align*}
    B_1(b) = O\left( \bar{B}\left( b\delta_+\right) \times B_3(b)
    \right) = \begin{cases}
      O\left( b^2\bar{B}^2(b) + b^2\bar{B}\left( b^2 \right) \right)
      \quad \text{if } \alpha > 2, \\ 
      O\left( b^\alpha\bar{B}\left( b^\alpha \right) \right)
      \quad\quad\quad\quad\quad 
      \text{ if } \alpha \in (1,2). 
      \end{cases}
  \end{align*}
  Additionally, we have that $B_2(b) = O(b^2\bar{B}(b)^2)$ and
  $\E_{\bf 0} [\tau_0] < \infty,$ respectively, from Lemmas \ref{Lem_est_B1}
  and \ref{Lem_LB_ET0}. Therefore, from \eqref{RR-INTERMSOF_B}, we
  arrive at the statement of Proposition \ref{PROP-UB}, and this
  concludes the proof.
\end{proof}

\appendix
\section{Lyapunov bound techniques for a uniform bound on
  $\Pr_{\bf w} \{ \tau_b^{(2)} < \tau_0\}$}
\label{APP-LYAPUNOV}
\noindent We use the Lyapunov bound technique that has been employed
in \cite{blanchet2008}, \cite{FLUID_HEUR_QUESTA}, and
\cite{Denisov20133027}. % For ${\bf w} = (w_1,w_2)$ such that
% $w_2 \geq w_1 \geq 0,$ let
% $K\left({\bf w}, {\bf x} \right) = \Pr_{\bf w}\left\{ {\bf W}_1 \in
%   d {\bf x}\right\}$
% denote the Markov transition kernel associated with the chain
% ${\bf W}$ whose dynamics are governed by \eqref{MIN-REC} and
% \eqref{MAX-REC}.
The strategy is to define a Markov kernel
$Q_{\theta }\left( {\bf w},\cdot \right) $ (indexed by some parameter
$\theta $) and a non-negative function
$H_{b}\left( w_{1},w_{2}\right) $ satisfying the following conditions:
\begin{itemize}
\item[(L1)] For every ${\bf w} = (w_1,w_2)$ such that $w_2 < b,$
  \begin{equation*}
    \mathbb{E}_{\bf w}^{\theta }\left[ r_\theta\left( {\bf w} ,{\bf
          W}_{1}\right) H_{b}\left( {\bf W}_{1}\right) \right] \leq H_{b}\left( 
      {\bf w} \right),  \label{LYA_IN_PART1}
  \end{equation*}
  where $\Pr_{\bf w}\{ {\bf W}_1 \in \cdot\}$ is the nominal
  transition kernel induced by recursions \eqref{MIN-REC} and
  \eqref{MAX-REC},
  $r_\theta ({\bf w}, {\bf x}) := \Pr_{\bf w} \{ {\bf W}_1 \in d{\bf
    x}\}/Q_\theta ( {\bf w}, d{\bf x})$
  is the corresponding Radon-Nikodym derivative with respect to
  $Q_\theta({\bf w}, \cdot)$, and
  $\mathbb{E}_{{\bf w}}^{\theta }\left[ \cdot \right]$ is the
  expectation associated with the probability measure in path space
  for the Markov evolution induced by
  $Q_{\theta }\left( {\bf w},\cdot \right).$
 
\item[(L2)] Whenever ${\bf w} = (w_1,w_2)$ is such that $w_2 > b,$
  $H_{b}\left( w_{1},w_{2}\right) \geq 1.$
\end{itemize}
If conditions (L1) and (L2) are satisfied, then following
the analysis in Part (iii) of Theorem 2 of \cite{blanchet2008}, we
have that 
\begin{equation}
  \mathbb{P}_{\bf w} \left\{\tau _{b}^{\left( 2\right) }<\tau _{0}
  \right\} \leq \mathbb{E}
  _{\bf w}^{\theta } \left[ \prod\limits_{n=0}^{\tau _{b}^{\left(
          2\right) }-1} r_{\theta }\left(
      {\bf W}_{n},{\bf W}_{n+1}\right) 
    H_{b}\left( {\bf W}_{\tau_b^{(2)}}\right) I\left(\tau _{b}^{\left( 2\right) }<\tau
      _{0} \right)\right] \leq
  H_{b}\left( w_{1},w_{2}\right) .  \label{IN_LB_END}
\end{equation}
\noindent The construction of
$Q_{\theta }\left( {\bf w},\cdot \right) $ and
$H_{b}\left( \cdot \right) $ follows the intuition explained in
\cite{blanchet2008} and \cite{FLUID_HEUR_QUESTA}: We wish to select
$Q_{\theta }\left( {\bf w},\cdot \right) $ as closely as possible to
the conditional distribution of the process
$\left\{ {\bf W}_{n}:n\geq 0\right\} $ given that
$\{\tau _{b}^{\left( 2\right) }<\tau _{0}\},$ because in that case, it
happens that (\ref{IN_LB_END}) is automatically satisfied with
equality. Additionally, we shall find a suitable non-negative function
$G_{b}\left( \cdot \right) $ so that
$H_{b}\left( w_{1},w_{2}\right) =G_{b}\left( w_{1}+w_{2}\right) $
satisfies the Lyapunov inequality (L1).

\noindent For ease of notation, let us write
\[l :=w_{1}+w_{2},\ L := W_1^{(1)} + W_2^{(2)} \text{ and } \Delta :=
L - l.\]
In order to construct $Q_{\theta }\left( {\bf w},\cdot \right) $ and
$G_{b}\left( \cdot \right),$ first define the Markov transition kernel
\begin{align*}
  Q^{\prime }\left( {\bf w},A\right) &=\mathbb{P}_{\bf w}\left\{ {\bf W}_{1}\in A\text{ }
                                       |\ X_1 >a\left( b-l\right) \right\}
                                       p\left( {\bf w}\right)\\ 
                                     &\quad\quad\quad\quad  +\mathbb{P}_{\bf w}\left\{
                                       {\bf W}_{1}\in A\text{ }|\ X_1 \leq
                                       a\left( b-l\right) ,W^{(2)}_1
                                       >0\right\}  \left(1-p\left(
                                       {\bf w}\right) \right),  
\end{align*}
where $p\left( {\bf w}\right) $ will be specified momentarily, and the
choice $a \in (0,1)$ is arbitrary. On the set
$\{\tau _{b}^{\left( 2\right) }<\tau _{0}\}$, given
${\bf w}=\left( w_{1},w_{2}\right) $ with $w_1 \leq w_{2} < b$, we
have that the nominal kernel $\Pr_{\bf w}\{ {\bf W}_1 \in \cdot\}$ is
absolutely continuous with respect to
$Q^{\prime }\left( {\bf w},\cdot \right).$ Now, for $z \geq 0,$ define
\begin{align*}
  h_{b}\left( z\right) = \int_{0}^{z+\kappa _{0}}\mathbb{P}\left\{
  X>b-z+t\right\}dt = \int_{b-z}^{b+\kappa _{0}}\mathbb{P}\left\{ X>u\right\} du.
\end{align*}
Next, write
\begin{equation*}
G_{b}\left( l\right) =\min (\kappa _{1}h_{b}\left( l\right) ,1)
\end{equation*}
and set 
\begin{equation*}
  p\left( {\bf w} \right) = \frac{\mathbb{P}\left\{ X>a\left( b-l\right)
    \right\} }{\kappa_2 h_{b}\left( l\right) }
\end{equation*}
where $\kappa_2$ is a number larger than
\begin{align*}
  \sup_{x > 0} \frac{\Pr \left\{ X > a
  x\right\}}{\int_{x}^{x+l+\kappa_0} \Pr\left\{ X > u\right\}du} <
  \infty. 
\end{align*}
Finally, define
$\theta =\left( \kappa_{0},\kappa_{1},\kappa_{2}\right) $ and write
\begin{equation*}
  Q_{\theta }\left( {\bf w},\cdot \right) =Q^{\prime }\left( {\bf w},\cdot \right)
  I(G_{b}\left( l\right) <1)+K\left( {\bf w},\cdot \right) I(G_{b}\left( l\right)
  =1).
\end{equation*}
Recall the notation
$l = w_1 + w_2 \text{ and } L =W_{1}^{\left( 1\right) }+W_{1}^{\left(
    2\right) }.$
Condition (L1) is verified via the following proposition:
\begin{proposition}
  \label{PROP-L1-VERIF}
  For every ${\bf w} = (w_1,w_2)$ such that $w_2 < b,$ we have that
  \begin{align*}
    \E_{\bf w}^\theta\left[ r_\theta\left( {\bf w}, {\bf W}_1\right)
    G_b(L) \right] \leq G_b(l).
  \end{align*}
\end{proposition}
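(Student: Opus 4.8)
The plan is to verify the drift inequality directly, following the piecewise definition of $Q_\theta$. If $G_b(l)=1$ there is nothing to prove: $r_\theta({\bf w},{\bf W}_1)G_b(L)\le r_\theta({\bf w},{\bf W}_1)$, and $\E_{\bf w}^\theta[r_\theta({\bf w},{\bf W}_1)]$ equals the total mass of the part of the nominal kernel $\Pr_{\bf w}\{{\bf W}_1\in\cdot\}$ that is absolutely continuous with respect to $Q_\theta({\bf w},\cdot)$, hence is at most $1=G_b(l)$.

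So assume $G_b(l)<1$, whence $Q_\theta=Q'$ and $G_b(l)=\kappa_1 h_b(l)$; for $b$ large the choice of $\kappa_1$ forces $l<b$ in this case, so $a(b-l)>0$. The first step is to identify the absolutely continuous part of the nominal kernel with respect to $Q'$. Since $\{X_1>a(b-l)\}$ forces $W_1^{(2)}\ge(w_1+X_1)^+=w_1+X_1>0$, the kernel $Q'$ is supported on $\{W_1^{(2)}>0\}$, and because $p({\bf w})\in(0,1)$ --- this is exactly where the choice of $\kappa_2$ enters --- $Q'$ dominates both conditional laws appearing in its definition, so the nominal kernel restricted to $\{W_1^{(2)}>0\}$ is $Q'$-absolutely continuous while its atom at $(0,0)$ is $Q'$-singular. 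Consequently
\[
  \E_{\bf w}^\theta\bigl[r_\theta({\bf w},{\bf W}_1)G_b(L)\bigr]=\E_{\bf w}\bigl[G_b(L)\,I(W_1^{(2)}>0)\bigr],
\]
and, using $W_1^{(1)}+W_1^{(2)}=(w_1+X_1)^++(w_2-T_1)^+$ so that $L-l\le(V-T_1)^+\le V$ and $L\le l+V$, it remains to bound the right-hand side by $\kappa_1 h_b(l)$.

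I would then split according to whether $\kappa_1 h_b(L)\ge 1$. On $\{\kappa_1 h_b(L)\ge1\}$ use $G_b(L)\le1$; since $h_b$ is nondecreasing and $L\le l+V$, this event forces $V$ past a threshold of order $b$, so by regular variation it has probability $O(\bar B(b))$. On the complementary event $G_b(L)=\kappa_1 h_b(L)$, and the identity $h_b(L)-h_b(l)=\int_0^{L-l}\Pr\{X>b-l-s\}\,ds$ --- which is negative precisely when $L<l$, and equals $-(h_b(l)-h_b(0))$ on $\{W_1^{(2)}=0\}$ since there $L=0$ --- reduces the matter to weighing the up-step term $\E_{\bf w}[(h_b(L)-h_b(l))^+I(\kappa_1 h_b(L)<1)]$ against the down-step gain. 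A case analysis of the recursion \eqref{MIN-REC}--\eqref{MAX-REC} on the signs of $w_1+V-T_1$ and $w_2-T_1$ (which bounds $L-l$ in each case), combined with Karamata's theorem and Potter's bounds, shows the up-step term is $O(\bar B(b))$. Collecting everything gives $\E_{\bf w}[G_b(L);W_1^{(2)}>0]\le\kappa_1 h_b(l)+O\bigl((1+\kappa_1)\bar B(b)\bigr)$; and since $\kappa_0$ sits additively inside $h_b$, one has $h_b(l)\ge h_b(0)\ge\kappa_0\Pr\{X>b+\kappa_0\}$, so choosing $\kappa_0$ large enough relative to $\kappa_1$ (consistently with the lower bound on $\kappa_1$ that (L2) demands) absorbs the error into a fraction of $\kappa_1 h_b(l)$ and closes the inequality.

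The hard part is this drift estimate and the bookkeeping of constants. The overshoot term, the contribution of the set where $L$ is close to $b$, and $h_b(l)$ itself are all of the same order --- $\bar B(b)$ when $l=O(1)$, and of order $b\bar B(b)$ when $l\asymp b$ --- so the inequality cannot afford any slack: one genuinely has to exploit the additive cushion built into $h_b$ through $\kappa_0$ together with the strictly negative contribution of the absorbing set $\{W_1^{(2)}=0\}$, and check that the constants line up in every regime $l<b$. The recursion case analysis used to bound $L-l$ is routine but has to be carried out carefully.
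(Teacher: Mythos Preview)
Your reduction to $\E_{\bf w}[G_b(L)\,I(W_1^{(2)}>0)]$ is correct, and the case $G_b(l)=1$ is indeed trivial. The gap is in how you close the inequality when $G_b(l)<1$. Your proposed mechanism --- absorb the positive error $O((1+\kappa_1)\bar B(b))$ using the saving from the absorbing set $\{W_1^{(2)}=0\}$ together with the $\kappa_0$ cushion --- works only when $w_2$ stays bounded: then $\Pr_{\bf w}\{W_1^{(2)}=0\}\ge\Pr\{T>w_2\}$ is bounded below by a positive constant, and the saving $G_b(0)\Pr_{\bf w}\{W_1^{(2)}=0\}\asymp\kappa_1\kappa_0\bar B(b)$ can dominate the error for $\kappa_0$ large. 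But $w_2$ ranges over $[0,b\delta_+)$, and for $w_2$ of order $b$ the probability $\Pr\{T>w_2\}$ is (in the prototypical exponential case) exponentially small in $b$; the absorbing-set saving is then negligible and no choice of the \emph{fixed} constants $\kappa_0,\kappa_1$ can close your stated bound $\E_{\bf w}[G_b(L);W_1^{(2)}>0]\le\kappa_1 h_b(l)+O((1+\kappa_1)\bar B(b))$, which is strictly larger than $G_b(l)=\kappa_1 h_b(l)$. A related issue: your up-step bound $O(\bar B(b))$ is not uniform in $l$; the correct order is $\Pr\{X>b-l\}$, which can be of constant order when $b-l$ is only moderately large.

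What is missing is a second negative mechanism that operates when $w_2$ is large. The paper supplies it via Lemma~\ref{LEM-LYAP-MEAN-NEG}: for $w_2>C$ one has $\E_{\bf w}[\Delta\,I(W_1^{(2)}>0)]<-\mu$ with $\mu>0$. Writing $G_b(L)=G_b(l)+\int_0^1 G_b'(l+u\Delta)\,\Delta\,du$ with $G_b'(z)=\kappa_1\Pr\{X>b-z\}$, the paper shows (Lemma~\ref{LEM-LYAP-DOM-CONV}) that the contribution on $\{X_1\le a(b-l)\}$ is at most $\Pr_{\bf w}\{W_1^{(2)}>0\}+(1+o(1))\Pr\{X>b-l\}\,h_b(l)^{-1}\,\E_{\bf w}[\Delta\,I(W_1^{(2)}>0)]$, a quantity strictly \emph{less} than $1$ by the negative drift; this then dominates the jump contribution $\Pr\{X>a(b-l)\}/(\kappa_1 h_b(l))$ once $\kappa_1$ is large. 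In your language: you must retain the down-step rather than discard it, and use that its first-order size $\Pr\{X>b-l\}\,\E[(-\Delta)^+]$ exceeds the up-step's $\Pr\{X>b-l\}\,\E[\Delta^+]$ precisely because $\E[\Delta]<0$ --- that is the content of Lemma~\ref{LEM-LYAP-MEAN-NEG}, and without it the constants do not line up for large $w_2$.
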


\noindent For proving Proposition \ref{PROP-L1-VERIF}, we consider
only the case $G_b(l) < 1.$ When $G_b(l) = 1,$ the inequality is
satisfied trivially. The following results are crucial in the proof of
Proposition \ref{PROP-L1-VERIF}.

\begin{lemma}
  \label{LEM-LYAP-MEAN-NEG}
  There exist positive constants $\mu$ and $C$ such that
  \[ \E_{(w_1,w_2)} \left[ \Delta I\left( W_1^{(2)} > 0 \right) \right] <
  -\mu\] whenever $w_2 > C.$
\end{lemma}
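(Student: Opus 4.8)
The plan is to reduce the statement to the one-step drift bound for the total workload recorded in Lemma~\ref{LEM-LYAP-ET0}, paying a correction that vanishes thanks to the light right tail of $T$ in Assumption~\ref{ASSUMP_LEFT_TAIL}. The structural point is that the two coordinates of ${\bf W}_1$ are a $\wedge$ and a $\vee$ of the same two nonnegative quantities $(w_1+V_0-T_1)^+$ and $(w_2-T_1)^+$ (see \eqref{MIN-REC}--\eqref{MAX-REC}), so $0\le W_1^{(1)}\le W_1^{(2)}$ always; hence on $\{W_1^{(2)}=0\}$ we also have $W_1^{(1)}=0$, so $L=0$ and $\Delta=-(w_1+w_2)$ there. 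This yields the pointwise identity
\[
  \Delta\,I\!\left(W_1^{(2)}>0\right)=\Delta+(w_1+w_2)\,I\!\left(W_1^{(2)}=0\right),
\]
and therefore
\[
  \E_{(w_1,w_2)}\!\left[\Delta\,I\!\left(W_1^{(2)}>0\right)\right]
  =\E_{(w_1,w_2)}[\Delta]+(w_1+w_2)\,\Pr_{(w_1,w_2)}\!\left\{W_1^{(2)}=0\right\}.
\]

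For the first term, Lemma~\ref{LEM-LYAP-ET0} furnishes constants $\varepsilon>0$ and $C'>0$ with $\E_{(w_1,w_2)}[\Delta]=\E_{(w_1,w_2)}[W_1^{(1)}+W_1^{(2)}]-(w_1+w_2)<-\varepsilon$ whenever $w_2\ge C'$. For the correction term, $W_1^{(2)}=(w_1+V_0-T_1)^+\vee(w_2-T_1)^+$, so $\{W_1^{(2)}=0\}=\{T_1\ge\max(w_1+V_0,w_2)\}\subseteq\{T_1\ge w_2\}$; using the standing ordering $w_1\le w_2$,
\[
  (w_1+w_2)\,\Pr_{(w_1,w_2)}\!\left\{W_1^{(2)}=0\right\}\le 2w_2\,\Pr\{T\ge w_2\}.
\]
By Assumption~\ref{ASSUMP_LEFT_TAIL} together with the regular variation of $\bar B$ we have $\Pr\{T\ge w_2\}=o(\bar B(w_2))=o\!\left(w_2^{-\alpha}L(w_2)\right)$, so the right-hand side is $o\!\left(w_2^{1-\alpha}L(w_2)\right)$, which tends to $0$ as $w_2\to\infty$ since $\alpha>1$ and a slowly varying function is eventually dominated by every positive power. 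Hence there is $C\ge C'$ such that for $w_2>C$ the correction term is below $\varepsilon/2$, and then $\E_{(w_1,w_2)}[\Delta\,I(W_1^{(2)}>0)]<-\varepsilon+\varepsilon/2=-\varepsilon/2=:-\mu$, as asserted.

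I do not anticipate a genuine obstacle; the only real idea is the conceptual one that conditioning on $\{W_1^{(2)}>0\}$ can worsen the drift only by $(w_1+w_2)\Pr\{W_1^{(2)}=0\}$, and this is negligible precisely because Assumption~\ref{ASSUMP_LEFT_TAIL} keeps $\Pr\{T\ge w_2\}=o(\bar B(w_2))$ with $\alpha>1$. If one prefers not to cite Lemma~\ref{LEM-LYAP-ET0}, the bound $\E_{(w_1,w_2)}[\Delta]<-\varepsilon$ also follows from a direct one-step computation: since $(a)^+=a+(-a)^+$ one gets $\E_{(w_1,w_2)}[\Delta]=\E[(T_1-V_0-w_1)^+]+\E[(T_1-w_2)^+]-1$; the first term is nonincreasing in $w_1$ and bounded by $\E[(T_1-V_0)^+]<\E T_1=1$ (strict since $\Pr\{V_0>0,\,T_1>0\}>0$ forces $(T_1-V_0)^+<T_1$ there), while $\E[(T_1-w_2)^+]\to0$, so for $w_2$ large the sum is at most $-(1-\E[(T_1-V_0)^+])/2$ uniformly in $w_1$, after which the correction term is handled exactly as above.
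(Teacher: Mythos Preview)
Your proof is correct and follows essentially the same approach as the paper: split $\E_{\bf w}[\Delta I(W_1^{(2)}>0)]$ into $\E_{\bf w}[\Delta]$ plus the correction $(w_1+w_2)\Pr\{W_1^{(2)}=0\}$, apply Lemma~\ref{LEM-LYAP-ET0} to the first term, and bound the correction by $2w_2\Pr\{T\ge w_2\}\to 0$. The only cosmetic difference is that you invoke Assumption~\ref{ASSUMP_LEFT_TAIL} to show $w_2\Pr\{T\ge w_2\}\to 0$, whereas the simpler observation $\E T<\infty\Rightarrow x\Pr\{T>x\}\to 0$ (which the paper implicitly uses) already suffices.
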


\begin{proof}
  First, observe that
  \begin{align*}
    \E_{\bf w} \left[ \Delta I\left( W_1^{(2)} > 0 \right) \right] =
    \E_{\bf w} \left[ \Delta \right] -  \E_{\bf w} \left[ \Delta
    I\left( W_1^{(2)} = 0 \right) \right].
  \end{align*}
  Additionally, note that $\Delta = -(w_1+w_2)$ when $W_1
^{(2)} = 0.$ Therefore, 
  \begin{align*}
    \E_{(w_1,w_2)} \left[ \Delta I\left( W_1^{(2)} = 0 \right) \right]
    = -\left( w_1+w_2 \right) \Pr \left\{ w_1 + V - T \leq 0, \   w_2 - T
    \leq 0 \right\}. 
  \end{align*}
  Therefore, due to Lemma \ref{LEM-LYAP-ET0},
  \begin{align*}
    \E_{\bf w} \left[ \Delta I\left( W_1^{(2)} > 0 \right) \right] 
    &\leq  \E_{\bf w} \left[ \Delta \right] + \left( w_1+w_2 \right) \Pr \left\{ w_2 - T
      \leq 0 \right\}\\
    & \leq -\epsilon + 2w_2 \Pr \left\{ T > w_2 \right\},
  \end{align*}
  where $w_2\Pr \{ T > w_2\}$ can be made arbitrarily small by
  choosing $C > w_2$ large enough. Hence the claim stands verified.
\end{proof}

% \begin{proof}
%   From recursion \eqref{MAX-REC}, see that $W_1^{(2)}$ either equals
%   $(w_2-T)^+$ (or) $(w_1 + V - T)^+.$ Additionally, recall that
%   $\E V = \E T.$ Then, it follows from \eqref{MIN-REC} and
%   \eqref{MAX-REC} that
%     \begin{align*}
%       \mu_1 &:= \E_{\bf w} \left[ \Delta I\left( W_1^{(2)} = \left( w_1 +
%               V - T\right)^+, W_1^{(2)}  > 0 \right)\right]\\
%             & = \E \left[ V - T + \left( w_2 - T\right)^+ - w_2 
%               \right]\\ 
%             &= -\E \left[ T I\left( w_2 - T \geq 0 \right)\right] - w_2 \Pr
%               \left\{ w_2 - T < 0 \right\} < 0
%   \end{align*}
%   On the other hand,
%   \begin{align*}
%     \mu_2 &:= \E_{\bf w} \left[ \Delta I\left( W_1^{(2)} = \left( w_2 -
%             T\right)^+, W_1^{(2)}  > 0 \right)\right]\\
%           &=Ena \E \left[\left( w_1 + V -T \right)^+ - w_1 -T
%             \right]\\ 
%           & = \E \left[ \left( V-T \right) I\left( w_1 + V - T
%             \geq 0\right) \right] - w_1 \Pr\left\{ w_1 + V - T \leq
%             0\right\} - \E T\\
%           &= -\E \left[ V I\left( w_1 + V -T < 0\right) \right] -
%             w_1 
%
%\Pr\left\{ w_1 + V - T \leq  0\right\} - \E \left[ T I
%             \left( w_1 + V - T \geq 0 \right)\right],
%   \end{align*}
%   which is also negative. Therefore, the claim stands proved for any
%   $\mu > -(\mu_1 + \mu_2).$
% \end{proof}

\begin{lemma}
  \label{LEM-LYAP-DOM-CONV}
  Recall that $l = w_1+w_2.$ The following holds as $(b-l) \rightarrow
  \infty:$
    \begin{align*}
      &\mathbb{E}_{\bf w}^{\theta }\left[ r_{\theta}\left( {\bf w},{\bf
        W}_{1}\right)\frac{G_{b}\left( L \right)}{G_b(l)} I\left(X_1
        \leq   a\left( b-l\right) \right) \right]  \nonumber\\
      &\quad\quad
        \leq \mathbb{P}_{\bf w}\left\{ W_{1}^{\left(
        2\right) }>0\right\} + \frac{\Pr\left\{ X > b-l
        \right\}}{h_b(l)}\mathbb{E}_{\bf w}\left[ \Delta I\left(
        W_{1}^{\left( 2\right)}>0\right)\right] \left(1 +
        o(1)\right). 
  \end{align*}
\end{lemma}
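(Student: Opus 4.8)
The plan is threefold: first undo the change of measure so that the left side becomes an expectation under the nominal one-step kernel; then use the explicit formula for $h_b$ to replace $G_b(L)/G_b(l)$ by a quantity affine in $\Delta$; and finally control the remaining error by a truncation argument that rests only on regular variation of $\Pr\{X>\cdot\}$ and the finiteness of $\mathbb{E}V$ and $\mathbb{E}T$. We work throughout on $\{G_b(l)<1\}$, where $Q_\theta=Q'$ (the complementary case $G_b(l)=1$ being trivial, since then $G_b(L)/G_b(l)=G_b(L)\le 1$, and in any case being dealt with separately in the proof of Proposition~\ref{PROP-L1-VERIF}). Since $r_\theta({\bf w},\cdot)$ is by definition the Radon--Nikodym derivative of the nominal kernel $\Pr_{\bf w}\{{\bf W}_1\in\cdot\}$ with respect to $Q_\theta({\bf w},\cdot)$, integrating $r_\theta$ against $Q_\theta$ returns the nominal law restricted to the support of $Q_\theta$; as $Q'$ charges the branch $\{X_1\le a(b-l)\}$ only on $\{W_1^{(2)}>0\}$, this gives
\[
\mathbb{E}_{\bf w}^{\theta}\left[\,r_\theta({\bf w},{\bf W}_1)\,\frac{G_b(L)}{G_b(l)}\,I(X_1\le a(b-l))\right]=\mathbb{E}_{\bf w}\left[\,\frac{G_b(L)}{G_b(l)}\,I(X_1\le a(b-l),\,W_1^{(2)}>0)\right],
\]
where $X_1:=V-T$, with $V$ and $T$ the service and interarrival times of the first transition.

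For the second step, use $G_b(l)=\kappa_1h_b(l)$ on $\{G_b(l)<1\}$, $G_b(L)\le\kappa_1 h_b(L)$, and $h_b(z)=\int_{b-z}^{b+\kappa_0}\Pr\{X>u\}\,du$. Since $L=l+\Delta$ (with $\Delta=W_1^{(1)}+W_1^{(2)}-w_1-w_2$ as in the text), a change of variable gives
\[
h_b(L)-h_b(l)=\int_{b-l-\Delta}^{b-l}\Pr\{X>u\}\,du=\int_0^{\Delta}\Pr\{X>b-l-v\}\,dv,
\]
so that $G_b(L)/G_b(l)\le 1+\frac{1}{h_b(l)}\int_0^{\Delta}\Pr\{X>b-l-v\}\,dv$. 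Taking expectations over $\{X_1\le a(b-l),W_1^{(2)}>0\}$, the constant term contributes at most $\Pr_{\bf w}\{W_1^{(2)}>0\}$ --- the first term in the lemma --- so it remains to prove, uniformly over $(w_1,w_2)$ with $w_1\le w_2<b$, that
\[
\mathbb{E}_{\bf w}\left[\left(\int_0^{\Delta}\Pr\{X>b-l-v\}\,dv\right)I(X_1\le a(b-l),W_1^{(2)}>0)\right]\le\Pr\{X>b-l\}\,\mathbb{E}_{\bf w}\big[\Delta\,I(W_1^{(2)}>0)\big]+o(\Pr\{X>b-l\})
\]
as $(b-l)\to\infty$. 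Dividing by $h_b(l)$ and invoking Lemma~\ref{LEM-LYAP-MEAN-NEG} (which makes $\mathbb{E}_{\bf w}[\Delta I(W_1^{(2)}>0)]$ bounded away from $0$ when $w_2\ge C$) then produces the $(1+o(1))$ form of the assertion; the complementary range $w_2<C$, where $l$ stays bounded and $\Pr\{X>b-l\}\asymp h_b(l)\asymp\bar B(b)$, is an easy separate case.

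To prove the displayed inequality, note that \eqref{MIN-REC}--\eqref{MAX-REC} give $\Delta=\max(X_1,-w_1)+\max(-T,-w_2)$, so $-2T\le\Delta\le X_1^{+}$; in particular, $X_1\le a(b-l)$ forces $\Delta\le a(b-l)$, so every argument $b-l-v$, $v\in[0,\Delta]$, stays above $(1-a)(b-l)\to\infty$ --- this is exactly why the truncation at level $a(b-l)$ in $Q'$ is present. Fix a large constant $K$ and split the inner integral at $|v|=K$. On $|v|\le K$, the uniform convergence of regularly varying functions on compact sets gives $\Pr\{X>b-l-v\}=\Pr\{X>b-l\}(1+o(1))$ uniformly, so this part equals $\Pr\{X>b-l\}\,\Delta_K(1+o(1))$ with $\Delta_K:=(\Delta\wedge K)\vee(-K)$; on $|v|>K$, Potter's bounds \eqref{POTT_BND} give $\Pr\{X>b-l-v\}\le\Pr\{X>(1-a)(b-l)\}\le C_a\Pr\{X>b-l\}$ for $b-l$ large, so this part is at most $C_a\Pr\{X>b-l\}\,(|\Delta|-K)^{+}$. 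Taking expectations and using $\Delta^{+}\le(V-T)^{+}$ and $\Delta^{-}\le 2T$, the contribution of the range $|v|>K$ and the discrepancy between $\mathbb{E}_{\bf w}[\Delta_K I(\cdots)]$ and $\mathbb{E}_{\bf w}[\Delta I(\cdots)]$ are together at most $C_a\Pr\{X>b-l\}\big(\mathbb{E}[(V-T)^{+}I((V-T)^{+}>K)]+2\mathbb{E}[T\,I(2T>K)]\big)$, which is $o(\Pr\{X>b-l\})$ after letting $K\to\infty$. Hence the inner-integral expectation equals $\Pr\{X>b-l\}\,\mathbb{E}_{\bf w}[\Delta I(X_1\le a(b-l),W_1^{(2)}>0)]+o(\Pr\{X>b-l\})$. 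Finally, $\Delta\ge-2T$ and $\{X_1>a(b-l)\}\subseteq\{V>a(b-l)\}$ yield $\mathbb{E}_{\bf w}[\Delta I(X_1>a(b-l),W_1^{(2)}>0)]\ge-2\mathbb{E}[T]\,\Pr\{V>a(b-l)\}=O(\Pr\{X>b-l\})$ by regular variation, so reinstating the event $\{X_1>a(b-l)\}$ changes $\mathbb{E}_{\bf w}[\Delta I(\cdots)]$ by $O(\Pr\{X>b-l\})$; this is negligible in the displayed bound, and the inequality follows.

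The step I expect to demand the most care is this last error analysis: arranging the limits so that $K$ is sent to infinity only after $(b-l)\to\infty$, and verifying that each error term is genuinely $o(\Pr\{X>b-l\})$ uniformly in $(w_1,w_2)$. The analytic ingredients --- Potter's bounds, uniform convergence of slowly varying functions on compacts, and integrability of $V$ and $T$ --- are routine once this decomposition is in hand; the only structural facts about the queue that enter are the one-step identity for $\Delta$ and the negative-drift estimate of Lemma~\ref{LEM-LYAP-MEAN-NEG}.
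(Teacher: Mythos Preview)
Your proposal is correct and takes essentially the same approach as the paper. The paper introduces an auxiliary uniform random variable $U$ to write $G_b(L)-G_b(l)=\int_0^1 G_b'(l+u\Delta)\,\Delta\,du$ and then applies dominated convergence; your integral representation $\int_0^\Delta \Pr\{X>b-l-v\}\,dv$ is the same object (just a change of variables), and your truncation-at-$K$ argument is dominated convergence carried out by hand with the same dominating function $m_{1-a}X_1^+ + 2T$. Your handling of the inequality $G_b(L)\le\kappa_1 h_b(L)$ and of the reinstated event $\{X_1>a(b-l)\}$ is a touch more explicit than the paper's, but the structure and the ingredients (Potter's bounds, uniform convergence on compacts, the one-step bounds $-2T\le\Delta\le X_1^+$) are identical.
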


\begin{proof}
  Since
  \begin{equation*}
    G_{b}\left( L_{1}\right) =G_{b}\left( l\right) +\int_{0}^{1}G_{b}^{\prime
    }\left( l+u\Delta \left( 1\right) \right) \Delta \left( 1\right) du,
  \end{equation*}
  we introduce a uniform random variable $U$, independent of
  everything else, to write
  \begin{align}
    &\mathbb{E}_{\bf w}^{\theta }\left[ r_{\theta}\left( {\bf w},{\bf
      W}_{1}\right)\frac{G_{b}\left( L \right)}{G_b(l)} I\left(X_1
      \leq   a\left( b-l\right) \right) \right]  \nonumber\\
    &\quad\quad =\mathbb{E}_{\bf w}\left[ \frac{G_{b}\left( L \right)}{G_b(l)}
      I\left(X_1 \leq a\left( b-l\right) ,W_{1}^{\left( 2\right) }>0
      \right)\right] \nonumber\\
    &\quad\quad=\mathbb{E}_{\bf w}\left[ \left( 1
      +\frac{G_{b}^{\prime }\left( l+U\Delta \right)}{G_b(l)} \Delta
      \right) I\left( X_1 \leq a\left( 
      b-l\right) ,W_{1}^{\left( 2\right) }>0\right)\right] \nonumber\\
    &\quad\quad
      \leq \mathbb{P}_{\bf w}\left\{ W_{1}^{\left(
      2\right) }>0\right\} + \frac{\Pr\left\{ X > b-l
      \right\}}{h_b(l)}\mathbb{E}_{\bf w}\left[ 
      \frac{\Pr \left\{ X >  b-l-U\Delta \right\}
      }{\Pr\left\{ X > b-l \right\}}\Delta I\left(X_1\leq a\left( b-l\right)
      ,W_{1}^{\left( 2\right)
      }>0\right)\right]. 
   \label{LYAP-J2-INTER-1}
  \end{align}
  We have used
  $G_{b}^{\prime }\left( l+U\Delta \left( 1\right) \right) =\kappa_1
  \Pr \left\{ X > b-l-U\Delta \left( 1\right) \right\}$
  to write the last step.  Additionally, whenever
  $X_1\leq a\left( b-l\right),$ observe that 
  \begin{align*}
    \Delta =\left( w_{1}+X_1 \right) ^{+}-w_{1}+\left(
    w_{2}-T_1 \right) ^{+}-w_{2}\leq X_1^{+}\leq a\left(
    b-l\right),
  \end{align*}
  and therefore,
  $\Pr \left\{ X > b-l-U\Delta \right\} \leq \Pr \left\{ X >
    (1-a)(b-l) \right\} \leq m_{1-a} \Pr \left\{ X > b - l
  \right\},$ where
  \[ m_t := \sup_{x > 0}\frac{\Pr\{ X > t x\}}{\Pr\{ X > x\}} <
  \infty,\]
  for every $t > 0.$ Here, the finiteness of $m_t$ follows from the
  regularly varying nature of the tail distribution of $X$ (recall
  that $\Pr\{ X > x\} \sim \bar{B}(x)$ as $x \rightarrow \infty$).  As
  a result, we have the following uniform bound for various values of
  $b$ and $l:$
 \begin{align}
   \mathbb{E}_{\bf w}\left[ 
   \frac{\Pr \left\{ X >  b-l-U\Delta \right\}
   }{\Pr\left\{ X > b-l \right\}}\Delta I\left(X_1\leq a\left(
   b-l\right),W_{1}^{\left( 2\right) }>0\right)\right] \leq m_{1-a} \E
   X^+. 
    \label{LYAP-J2-INTER-BND-2}
 \end{align}
 Consequently, due to dominated convergence theorem, we obtain that 
 \begin{align*}
   \mathbb{E}_{\bf w}\left[ 
   \frac{\Pr \left\{ X >  b-l-U\Delta \right\}
   }{\Pr\left\{ X > b-l \right\}}\Delta I\left(X_1\leq a\left( b-l\right)
   ,W_{1}^{\left( 2\right)
   }>0\right)\right] \sim \E_{\bf w} \left[ \Delta I \left(
   W_1^{(2)} > 0 \right)
   \right],
 \end{align*}
 as $(b-l) \rightarrow \infty.$ Now, the statement of Lemma
 \ref{LEM-LYAP-DOM-CONV} is immediate from \eqref{LYAP-J2-INTER-1} and
 the above stated convergence.
\end{proof}

\begin{proof}[Proof of Proposition \ref{PROP-L1-VERIF}]
  As mentioned before, we consider $G_{b}\left( l\right) <1.$ First,
  observe that
  \begin{align}
    \mathbb{E}_{\bf w}^{\theta }\left[ r_{\theta} \left( {\bf w},
    {\bf W}_{1}\right) \frac{G_{b}\left( L \right) }{G_b(l)}I\left(X_1>a\left( b-l\right)\right)
    \right] &= \mathbb{E}_{\bf w}\left[ \frac{G_{b}\left( L \right)
              }{G_b(l)}I\left(X_1 >a\left( b-l\right)\right)
              \right] \nonumber \\
            &\leq \frac{\mathbb{P}\left\{ X>a\left( b-l\right)
              \right\}}{\kappa_{1} h_b(l)} 
    \label{LYAP-INTER-J1}
  \end{align}
  because $G_b(\cdot) \leq 1.$ For a respective bound on the
  complementary event $\{ X_1 \leq a(b-l) \},$ it is easy to see that
  our strategy must use Lemmas \ref{LEM-LYAP-MEAN-NEG} and
  \ref{LEM-LYAP-DOM-CONV} in the following way: Given $\delta > 0,$
  there exists a constant $C_\delta$ large enough such that for all
  initial conditions ${\bf w} = (w_1,w_2)$ satisfying $w_2 > C$ and
  $b-l > C_\delta,$
  \begin{align*}
    \mathbb{E}_{\bf w}^{\theta }\left[ r_{\theta}\left( {\bf w},{\bf
      W}_{1}\right)\frac{G_{b}\left( L \right)}{G_b(l)} I\left(X_1
      \leq   a\left( b-l\right) \right) \right]  
          \leq \mathbb{P}_{\bf w}\left\{ W_{1}^{\left(
      2\right) }>0\right\} - (1-\delta) \mu \frac{\Pr\left\{ X > b-l
      \right\}}{h_b(l)}.
  \end{align*}
  Combining this bound with \eqref{LYAP-INTER-J1}, we obtain
  \begin{align*}
    \mathbb{E}_{\bf w}^{\theta }\left[ r_{\theta}\left( {\bf w},{\bf
    W}_{1}\right)\frac{G_{b}\left( L \right)}{G_b(l)} \right]  
    &\leq \mathbb{P}_{\bf w}\left\{ W_{1}^{\left( 2\right) }>0\right\}
      + \frac{\Pr\left\{ X > a(b-l) \right\}}{h_b(l)} \left( \frac{1}{\kappa_1}
      - \frac{(1-\delta) \mu}{m_a}\right)\\
    &\leq 1 + \kappa_2 p\left( {\bf w} \right) \left( \frac{1}{\kappa_1}
      - \frac{(1-\delta) \mu}{m_a}\right)
  \end{align*}
  which is, in turn, smaller than $1$ for $\kappa_1$ suitably large.
  In addition to this, in the region
  $\{(w_1,w_2): w_2 > C, b - l < C_\delta\},$ we simply let
  $G_b(l) = 1$ by again choosing $\kappa_1$ large enough. This
  flexibility in the choice of $\kappa_1$ yields us
  \begin{align}
    \label{LYAP-J2-IMP-REG}
    \mathbb{E}_{\bf w}^{\theta }\left[ r_{\theta}\left( {\bf w},{\bf
    W}_{1}\right)\frac{G_{b}\left( L \right)}{G_b(l)} \right] \leq 1
  \end{align}
  for initial conditions ${\bf w} = (w_1,w_2)$ satisfying $w_2 > C.$
  Now, turning our attention to the values of ${\bf w}$ such that
  $w_2 \leq C,$ we see that $l = w_1+ w_2 \leq 2C,$ and as a consequence
  of the regularly varying nature of the tail of $X,$ we obtain
  \begin{align*}
    \frac{\Pr \left\{ X > b-l\right\}}{h_b(l)} =
    \left( \int_{0}^{l+\kappa_0} \frac{\Pr\left\{ X > b-l +
    u\right\}}{\Pr \left\{ X > b-l\right\}} du\right)^{-1} =
    \frac{1+o(1)}{l+\kappa_0}  
  \end{align*}
  as $b \rightarrow \infty.$ Then, it is immediate from
  \eqref{LYAP-J2-INTER-1} that whenever $w_2 \leq C,$
  \begin{align*}
    \mathbb{E}_{\bf w}^{\theta }\left[ r_{\theta}\left( {\bf w},{\bf
    W}_{1}\right)\frac{G_{b}\left( L \right)}{G_b(l)} I\left(X_1  \leq
    a\left( b-l\right) \right) \right] 
    \leq \Pr_{\left(C,C\right)} \left\{ W_1^{(2)} > 0 \right\}
    + \frac{m_{1-a} \E \left[
    X_1^+\right]}{\kappa_0} \left(1+o(1) \right).
%    \label{LYAP-J2-INTER-BND-1}
  \end{align*}
  Combining this bound with the one obtained in \eqref{LYAP-INTER-J1},
  we get
  \begin{align*}
    \mathbb{E}_{\bf w}^{\theta }\left[ r_{\theta}\left( {\bf w},{\bf
    W}_{1}\right)\frac{G_{b}\left( L \right)}{G_b(l)} \right]
    \leq \frac{p\left({\bf
    w}\right)\kappa_2}{\kappa_1} + \Pr_{\left(C,C\right)} \left\{
    W_1^{(2)} > 0 \right\}   + \frac{m_{1-a} \E \left[
    X_1^+\right]}{\kappa_0} \left(1+o(1) \right),
  \end{align*}
  which can also be made smaller than 1 by picking $\kappa_0$ and
  $\kappa_1$ large enough. Thus, for all initial conditions ${\bf w},$
  we have a consistent choice of parameters
  $(\kappa_0,\kappa_1,\kappa_2)$ that satisfies (L1).
\end{proof}
\noindent Since $G_b(l) = 1$ whenever $w_1+w_2 \geq b-C_\delta,$ we
also have $G_b(l) = 1$ if $w_2 > b.$ This verifies condition
(L2). Since both (L1) and (L2) are satisfied, it follows from
\eqref{IN_LB_END} that if $w_{2}<b\delta _{+}$ for some
$ \delta _{+}<1/2,$ then
\begin{align*}
  \Pr_{\bf w}\left\{ \tau _{b}^{\left( 2\right) }<\tau _{0}\right\}
  &\leq \kappa
    _{1}h_{b}\left( l\right) =\kappa _{1}\int_{b-l}^{b+\kappa _{0}}\mathbb{P}
    \left\{ X>u\right\} du \\
  &\leq\kappa _{1}\Pr \left\{ X > b-l \right\} \left( \kappa _{0}+l\right) \leq
    \kappa _{1}\bar{B}\left( b\left( 1-2\delta _{+}\right) \right) \left( \kappa
    _{0}+2w_{2}\right)\left(1+o(1)\right) .
\end{align*}
The right hand side of the previous inequality is equivalent to the statement
of Lemma \ref{LEM_LB_PROB}, so we conclude the proof.

\section{Proofs for other estimates}
\label{APP-OTHERS}
\begin{proof}[Proof of Lemma \ref{LEM-LYAP-ET0}]
  First, observe that 
  \begin{align*}
    \E \left[ (w_1 + V - T)^+ - w_1\right]  &= \E\left[ V - T \right]
                                              - \E \left[ (V-T)I\left(
                                              w_1 + V - T < 0 
                                              \right)\right] - w_1
                                              \Pr \left\{ w_1 + V -
                                              T < 0\right\}\\
                                            &= - \E \left[ (V-T)I\left(
                                              w_1 + V - T < 0 
                                              \right)\right] - w_1
                                              \Pr \left\{ w_1 + V -
                                              T < 0\right\}, \text{
                                              and }\\
    \E \left[ (w_2- T)^+ - w_2\right] &= -\E T + \E
                                        \left[ T I\left( w_2 -T <  0\right)\right] - w_2 \Pr\left\{ w_2
                                        -T < 0 \right\}.
  \end{align*}
  Then, it follows from the definition of ${\bf W}_{1}$ in recursions
  \eqref{MIN-REC} and \eqref{MAX-REC} that
  \begin{align*}
    &\E_{(w_1,w_2)}\left[ \left( W_1^{(1)} + W_1^{(2)}\right) - (w_1 +
      w_2) \right] = \E \left[ (w_1 + V - T)^+ - w_1\right] + \E\left[
      (w_2 + T)^+ -   w_2 \right]\\
    &\quad\quad = - \E \left[ V I\left( w_1 + V - T < 0
      \right)\right] - w_1  \Pr \left\{ w_1 + V -  T < 0\right\} -
      \E\left[ T I\left( w_1 + V - T \geq 0 \right)\right]\\
    &\quad\quad\quad\quad - w_2 \Pr\{
      w_2 - T < 0\} + \E \left[ T I\left( w_2 - T < 0 \right)\right] 
    \end{align*}
    which is negative if $\E[TI(T > w_2)]$ is small enough, and this
    can be achieved by choosing $C < w_2$ large enough. This completes
    the proof.
\end{proof}

\begin{proof}[Proof of Lemma \ref{LEM_SINGLE_Q_DOM}]
  From recursions \eqref{MIN-REC} and \eqref{MAX-REC}, it is evident
  that for every $1 \leq k \leq n,$
  \begin{align*}
    W_k^{(i)} \leq \left( W_{k-1}^{(i)} + X_{k}\right)^+, \quad i=1,2.
  \end{align*}
  We repeatedly expand the recursion, as below, to obtain
  \begin{align*}
    W_k^{(i)} &\leq \max\left\{ 0, \ W_{k-1}^{(i)} + X_k \right\}\\
              &\leq \max \left\{ 0, \ X_k, \ W_{k-2}^{(i)} + X_{k-1} +
                X_{k} \right\}\\
              &\leq \max \left\{ 0, \ X_k, \ X_{k-1} + X_k, \ 
 X_{k-2} +
                X_{k-1} + X_k, \ldots, \ W_{0}^{(i)} + X_1+ \ldots+
                X_{k-1} +   X_{k} \right\}\\ 
              &\leq S_k - \min_{0 \leq j \leq k}S_j + w_i,
  \end{align*}
  where we have used that $S_0:=0, S_j := X_1+\ldots+X_j$ and
  $w_i \geq 0.$ Then
  \begin{align*}
    \max_{0 < k \leq n} W_k^{(i)} \leq \max_{0 \leq k \leq n}S_k + \max_{0 < k \leq n}
    \max_{0 \leq j \leq k} \left( -S_j \right) + w_i \leq 2\max_{0
    \leq k \leq n} \left \vert S_k \right \vert + w_i,
  \end{align*}
  and this proves the result.
\end{proof}

\begin{proof}[Proof of Lemma \ref{LEM-MAX-ABS-SUM-ASYMP}]
  According to Corollary 1 of \cite{doi:10.1137/1126006}, we have that
\begin{equation}
  \Pr \left\{ \max_{0\leq n\leq m}S_{n}>x\right\} =\left( \Pr\left\{
      \max_{0\leq t\leq 1}\sigma B\left( t\right) >\frac{x}{m^{1/2}}\right\} +m
    \Pr \left\{ X > x\right\} \right) \left( 1+o\left( 1\right) \right) .
\label{Eq_LN_0}
\end{equation}
uniformly over $y\geq m^{1/2},$ as $m \rightarrow \infty$ (actually,
\cite{doi:10.1137/1126006} states that the asymptotic is valid
assuming $x/m^{1/2}\rightarrow \infty $ but the case
$x/m^{1/2}=O\left( 1\right) $ follows from the Central Limit Theorem).
% Equation (\ref{Eq_LN_0}) implies, in particular, that for each
% $\ve >0$, there is $m_{\ve}>0$ such that if $m>m_{\ve}$, then
% \begin{equation}
%   \Pr \left\{ \max_{0\leq k\leq m}S_{n} > x\right\} \leq \left( 1+\ve
%   \right) \left( \Pr \left\{ \max_{0\leq t\leq 1}\sigma B\left( t\right) >
%       \frac{x}{m^{1/2}}\right\} +m\Pr \left\{ X > x
%     \right\} \right). 
%   \label{q_LN_a} 
% \end{equation}
Also, from the development in \cite{doi:10.1137/1126006}, because
$\Pr\{ T>x\} =o\left( \bar{B}\left( x\right) \right),$ for each $\ve
> 0,$ there is a positive integer $m_{\ve}$ such that for all $m >
m_{\ve},$ 
\begin{equation}
  \Pr \left\{ \max_{0\leq n\leq m}\left( -S_{n}\right) > x\right\} \leq
  (1+\ve)\left( \Pr \left\{ \max_{0\leq t\leq 
        1}\sigma B\left( t\right) >\frac{x}{m^{1/2}}\right\} +m\Pr \left\{
      -X>x\right\}\right) .  \label{Eq_LN_b}
\end{equation}
Additionally, since
\begin{equation*}
\Pr\left\{ \max_{0\leq n\leq m}\left\vert S_{n}\right\vert >x\right\}
\leq \Pr\left\{ \max_{0\leq n\leq m}S_{n}>x\right\} +\Pr\left\{
\max_{0\leq n\leq m}\left( -S_{n}\right) >x\right\} 
\end{equation*}
the statement of Lemma \ref{LEM-MAX-ABS-SUM-ASYMP} immediately follows
from \eqref{Eq_LN_0} and \eqref{Eq_LN_b}.
\end{proof}

\noindent \textit{Proof of Lemma \ref{Lem_Run_Max_Appers}.}
Let
\begin{align*}
  I_1(b) &:=  \mathbb{E}\left[ I\left( \max_{0\leq n\leq N_A(X) + 1} 2\left\vert S_{n}\right\vert
           >\left( \delta -\delta _{-}\right) b, N_A(X) + 1 \leq 2X\right)
           \max_{0\leq n\leq N_A(X) + 1} \left\vert S_{n}\right\vert \
           \left|\frac{}{}\right.  X > b\delta_+ \right],\\
  I_2(b) &:= \mathbb{E}\left[ I\left( \max_{0\leq n\leq N_A(X) + 1} 2\left\vert S_{n}\right\vert
           >\left( \delta -\delta _{-}\right) b, N_A(X) + 1 > 2X\right)
           \max_{0\leq n\leq N_A(X) + 1} \left\vert S_{n}\right\vert \
           \left|\frac{}{}\right.  X > b\delta_+ \right].
\end{align*}
Then our objective is to show that
$I_1(b) + I_2(b) = O(b^2\bar{B}(b)).$ This is an immediate consequence
of the following two results.

\begin{lemma}
  \label{LEM-B32-I}
  Under Assumption \ref{ASSUMP-DIST-V} with $\alpha > 2,$ and
  Assumption \ref{ASSUMP_LEFT_TAIL},
  \begin{align*}
    I_1(b) = O \left( b^2\bar{B}(b)\right).
  \end{align*}
\end{lemma}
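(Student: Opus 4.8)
The plan is to follow the proof of Lemma~\ref{Lem_No_Run_Max_Mult} almost verbatim, the only difference being that the multiplicative factor $X$ is now the running maximum $\max_{0\le n\le N_A(X)+1}|S_n|$. First I would eliminate $N_A$ from the picture: on the event $\{N_A(X)+1\le 2X\}$ the running maximum $\max_{0\le n\le N_A(X)+1}|S_n|$ is dominated by $M_X:=\max_{0\le n\le \lceil 2X\rceil}|S_n|$, and since both the outer indicator and the multiplier appearing in $I_1(b)$ are nondecreasing in that running maximum, dropping the indicator $I(N_A(X)+1\le 2X)$ and conditioning on $X=t$ yields
\begin{equation*}
  I_1(b)\le \frac{1}{\mathbb{P}\{X>b\delta_+\}}\int_{b\delta_+}^\infty g(t)\,\mathbb{P}\{X\in dt\},\qquad g(t):=\mathbb{E}\big[\,M_t\,;\,M_t>cb\,\big],
\end{equation*}
where $M_t:=\max_{0\le n\le \lceil 2t\rceil}|S_n|$ and $c:=(\delta-\delta_-)/2>0$. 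Writing $g(t)=cb\,\mathbb{P}\{M_t>cb\}+\int_{cb}^\infty \mathbb{P}\{M_t>y\}\,dy$ then reduces the whole problem to tail bounds for $M_t$, exactly as in Part~3.a).

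Next I would split the range of $t$. For $b\delta_+\le t\le (cb)^2/2$ (so that every argument $y\ge cb$ satisfies $y\ge (2t)^{1/2}$), Lemma~\ref{LEM-MAX-ABS-SUM-ASYMP} applies and decomposes $g(t)$ into a Gaussian part $g_{\mathrm G}(t)=O\big((b+\sqrt t)\exp(-c'b^2/t)\big)$ (using the reflection principle to control the Brownian maximum and its integrated tail) and a heavy-tailed part
\[
  g_{\mathrm H}(t)=O\Big(t\,\big[cb\,\mathbb{P}\{|X|>cb\}+\textstyle\int_{cb}^\infty \mathbb{P}\{|X|>y\}\,dy\big]\Big)=O\big(t\,\mathbb{E}[|X|;|X|>cb]\big)=O\big(t\,b\bar B(b)\big),
\]
the last two estimates following from Karamata's theorem~\eqref{KARAMATA} (valid since $\alpha>1$), Assumption~\ref{ASSUMP_LEFT_TAIL} (to discard the left tail of $X$) and \eqref{tail_equiv_F_B}. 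For $t>(cb)^2/2$ I would instead use the crude bound $g(t)\le \mathbb{E}[M_t]=O(\sqrt t)$, which follows from the $L^2$ maximal inequality applied to the random walk $(S_n)$, the finiteness of $\mathrm{Var}(V)$ and $\mathrm{Var}(T)$ being guaranteed by $\alpha>2$.

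It remains to integrate these pieces against $\mathbb{P}\{X\in dt\}$ and divide by $\mathbb{P}\{X>b\delta_+\}=\Theta(\bar B(b))$. The heavy-tailed part gives $\int_{b\delta_+}^{(cb)^2/2}g_{\mathrm H}(t)\,\mathbb{P}\{X\in dt\}=O(b\bar B(b))\,\mathbb{E}[X;X>b\delta_+]=O(b^2\bar B^2(b))$, which after the division is precisely the asserted $O(b^2\bar B(b))$. The Gaussian part, after an integration by parts in $t$ and an appeal to Lemma~\ref{LEM-BSQ-SIMP-TERM} (exactly as in the proof of Lemma~\ref{Lem_No_Run_Max_Mult}), and the $t>(cb)^2/2$ part, via $\mathbb{E}[X^{1/2};X>(cb)^2/2]=O(b\,\bar B(b^2))$ (Karamata again), are both $O(b\,\bar B(b^2))=o(\bar B(b)\cdot b^2\bar B(b))$ and hence negligible. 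The one point that requires real care---the main obstacle---is that the uniform estimate of Lemma~\ref{LEM-MAX-ABS-SUM-ASYMP} is valid only for arguments $\ge(2t)^{1/2}$; this is what forces the split at $t\asymp b^2$, and one must separately verify that the crossover region $t\asymp b^2$, where the exponent $b^2/t$ is merely $\Theta(1)$ rather than large, still contributes only $O(b\,\bar B(b^2)/\bar B(b))=o(b^2\bar B(b))$ after the division. Everything else is routine one-dimensional integral estimation, identical in spirit to the proof of Lemma~\ref{Lem_No_Run_Max_Mult}; combined with the (exponentially small) estimate for $I_2(b)$, this yields Lemma~\ref{Lem_Run_Max_Appers}.
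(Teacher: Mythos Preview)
Your proposal is correct and follows essentially the same route as the paper's proof: condition on $X=t$, replace $N_A(X)+1$ by $2t$, write $g(t)=cb\,\mathbb{P}\{M_t>cb\}+\int_{cb}^\infty \mathbb{P}\{M_t>u\}\,du$, split each tail probability via Lemma~\ref{LEM-MAX-ABS-SUM-ASYMP} into a Gaussian piece (handled by integration by parts and Lemma~\ref{LEM-BSQ-SIMP-TERM}) and a heavy-tail piece (handled by Karamata), and finally divide by $\mathbb{P}\{X>b\delta_+\}=\Theta(\bar B(b))$. The only cosmetic difference is your treatment of the region $t>(cb)^2/2$: you invoke the $L^2$ maximal inequality to get $g(t)\le \mathbb{E}[M_t]=O(\sqrt t)$, whereas the paper keeps the Brownian bound $\mathbb{P}\{\max_{0\le s\le 1}\sigma|B(s)|>u/\sqrt{2t}\}$ uniformly in $t$ (which, integrated in $u$, yields the same $O(\sqrt t)$ there). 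Both lead to the same $O(b\bar B(b^2))$ contribution from that region, which is indeed negligible against the dominant $O(b^2\bar B^2(b))$ heavy-tail contribution.
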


\begin{lemma}
  \label{LEM-B32-II}
  Under Assumption \ref{ASSUMP-DIST-V} with $\alpha > 2,$ and
  Assumption \ref{ASSUMP_LEFT_TAIL},
  \begin{align*}
    I_2(b) = O \left(  \exp \left( -\nu b\right)\right), 
  \end{align*}
  for a suitable $\nu > 0.$
\end{lemma}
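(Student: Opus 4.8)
The plan is to reduce the claim, by integrating out the value of $X$, to a pure large-deviation estimate for the renewal counting process $N_A(\cdot)$, using that $X$ is independent of both the random walk $(S_n)_{n\ge 0}$ and $N_A(\cdot)$. Since the event $\{X>b\delta_+\}$ has probability $\Pr\{X>b\delta_+\}\sim\bar{B}(b\delta_+)=\Theta(\bar{B}(b))$, which decays only polynomially fast, it suffices to prove $\E[\, I(N_A(X)+1>2X)\max_{0\le n\le N_A(X)+1}|S_n|\,;\,X>b\delta_+\,]=O(e^{-\gamma b})$ for some $\gamma>0$ (we have bounded the indicator $\{\max_n 2|S_n|>(\delta-\delta_-)b\}$ by $1$, which only enlarges the expectation). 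Conditioning on $X=t$ turns this into $\int_{b\delta_+}^\infty g(t)\,\Pr\{X\in dt\}$ with $g(t):=\E[\, I(N_A(t)+1>2t)\max_{0\le n\le N_A(t)+1}|S_n|\,]$, and the whole point is that for $t$ large $\{N_A(t)+1>2t\}$ is a large-deviation event for $N_A(t)$: the empirical interarrival mean over roughly $2t$ renewals has fallen below $t/(2t)\approx 1/2<1=\E T$.

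For the bound on $g(t)$ I would replace the running maximum by $\max_{0\le n\le N_A(t)+1}|S_n|\le\sum_{i=1}^{N_A(t)+1}|X_i|\le\sum_{i=1}^{N_A(t)+1}(V_{i-1}+T_i)$ and split this sum at the index $m_0=m_0(t)\sim 2t$ associated with the threshold. For $i\le m_0$: on $\{N_A(t)+1>2t\}$ every $T_i$ with $i\le N_A(t)$ is at most $t$, while $V_{i-1}$ is independent of $(T_k)_{k\ge 1}$, so the $i$-th contribution is at most $(\E V+t)\,\Pr\{T_1+\dots+T_{m_0}\le t\}$, and summing the $O(t)$ such indices gives $O(t^2)\,\Pr\{T_1+\dots+T_{m_0}\le t\}$. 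For $i>m_0$: the event $\{i\le N_A(t)+1\}$ forces $T_1+\dots+T_{i-1}\le t$, which is independent of $(V_{i-1},T_i)$, so the $i$-th contribution is $(\E V+\E T)\,\Pr\{T_1+\dots+T_{i-1}\le t\}$, summed over $i>m_0$. In both blocks the governing quantity is $\Pr\{T_1+\dots+T_m\le t\}$ with $m\ge\tfrac{4}{3}t$, say; the Cram\'er/Chernoff bound then gives $\Pr\{T_1+\dots+T_m\le t\}\le e^{-cm}$ for any $c<\Lambda^*(3/4)$, where $\Lambda^*$ is the Cram\'er rate function of $T$, which is positive and decreasing below its mean $\E T=1$ (the degenerate case $T\equiv 1$ is trivial, since then the event is empty for $t\ge 1$). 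Summing the resulting geometric series gives $g(t)=O(t^2e^{-c't})$ for some $c'>0$, hence $g(t)=O(e^{-\gamma t})$ for every $\gamma\in(0,c')$, uniformly in $t$ large.

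Plugging this back, $\E[\cdots\,;\,X>b\delta_+]=\int_{b\delta_+}^\infty g(t)\,\Pr\{X\in dt\}\le Ce^{-\gamma b\delta_+}$, and dividing by $\Pr\{X>b\delta_+\}$ — which is bounded below by $b^{-\alpha-1}$ for $b$ large by regular variation — yields $I_2(b)=O(b^{\alpha+1}e^{-\gamma b\delta_+})=O(e^{-\nu b})$ for any $0<\nu<\gamma\delta_+$, which is the assertion. There is no genuine obstacle here: the only thing to watch is the index bookkeeping in the split, in particular checking that on $\{N_A(t)+1>2t\}$ all the partial sums of interarrival times that appear sit in the regime $t/m$ bounded away from $\E T=1$, so that the exponential rate $c$ can be taken uniform in $t$ and in the index. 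It is worth noting that only $\E V,\E T<\infty$, i.e. $\alpha>1$, is actually used, so the finite-variance hypothesis $\alpha>2$ is not needed for this particular lemma.
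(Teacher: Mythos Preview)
Your proposal is correct and follows essentially the same route as the paper: drop the indicator on the running maximum, condition on $X=t$, bound $\max_n|S_n|$ by $\sum_{i\le N_A(t)+1}(V_{i-1}+T_i)$, and then exploit that $\{N_A(t)+1>2t\}$ is a lower large-deviation event for i.i.d.\ sums of nonnegative $T_i$'s with mean $1$, hence has exponentially small probability via Chernoff. The only organizational difference is that the paper avoids your split at $m_0$ by using the global identity $T_1+\dots+T_{N_A(t)}\le t$ to handle all the $T_i$'s at once and a Wald-type step $\E[\sum_{n\le N_A(t)}V_n\,I(\cdot)]=\E V\cdot\E[N_A(t)I(\cdot)]$ for the $V$-part, which gives a bound of order $t$ rather than $t^2$ before the exponential factor and then finishes by interchanging the order of integration; your pointwise bound $g(t)=O(t^2e^{-c't})$ is slightly cruder but, as you note, costless against the exponential decay. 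Your closing remark that only $\alpha>1$ is actually used is also correct.
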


\begin{proof}[Proof of Lemma \ref{LEM-B32-I}]
First, observe that 
    \begin{align*}
      I_1(b) &\leq \int_{b\delta_+}^\infty \E \left[ I \left( \max_{0 \leq n \leq 2t} \left \vert S_n
               \right \vert > \frac{\delta - \delta_-}{2}b\right)
               \max_{0 \leq n \leq 2t} \left \vert S_n 
               \right \vert \right] \frac{\Pr\left\{ X \in dt 
               \right\}}{\Pr\left\{ X > b\delta_+\right\}}.
     \end{align*}
     Additionally, letting $c = (\delta - \delta_-)/2,$ observe that
     \begin{align*}
       \E \left[ I \left( \max_{0 \leq n \leq 2t} \left \vert S_n
       \right \vert > cb\right)
       \max_{0 \leq n \leq 2t} \left \vert S_n 
       \right \vert \right] &= cb \Pr \left\{ \max_{0 \leq n \leq 2t}
                              \left \vert S_n \right \vert >
                              cb\right\} + \int_{cb}^\infty \Pr\left\{
                              \max_{0 \leq n \leq 2t} \left \vert S_n
                              \right \vert > u \right\} du 
     \end{align*}
     Therefore, due to \eqref{tail_equiv_F_B},
     \begin{align}
       I_1(b)  = O \left( \frac{1}{\bar{B}(b\delta_+)}  
       \int_{b\delta_+}^\infty \left(b \Pr \left\{ \max_{0 \leq n \leq 2t} 
       \left \vert S_n \right \vert > cb\right\} +  
       \int_{cb}^\infty \Pr\left\{ \max_{0 \leq n \leq 2t} \left \vert
       S_n  \right \vert > u \right\} du \right) \Pr \{ X \in  dt\}
       \right). 
      \label{INTER-GG2-I1}
     \end{align}
     Due to the applicability of the uniform asymptotic presented in
     Lemma \ref{LEM-MAX-ABS-SUM-ASYMP} in the region $2t \leq c^2b^2,$
     and because of the applicability of Central Limit Theorem in the
     region $2t > c^2b^2,$ we obtain
     \begin{align*}
       &\int_{b\delta_+}^\infty\Pr \left\{ \max_{0 \leq n \leq 2t} 
         \left \vert S_n \right \vert > cb\right\} \Pr\{ X \in dt \}
         \nonumber\\
       &\quad\quad= 
         O\left(\int_{b \delta_+}^\infty \Pr \left\{ \max_{0 \leq s \leq
         1}\sigma  \left\vert B(s) 
         \right \vert > \frac{cb}{\sqrt{2t}}\right\} \Pr\{ X
         \in dt\} + \int_{b\delta_+}^{\frac{c^2b^2}{2}} t \Pr\left\{
         \left \vert X \right \vert > cb\right\} \Pr\{X \in
         dt\}\right) \nonumber\\
       &\quad\quad=
         O\left( b \int_{b\delta_+}^\infty \frac{\Pr\left\{ X >
         t\right\}}{\sqrt{t^3}} \exp\left( -\frac{c^2 
         b^2}{4\sigma^2t}\right) dt + \Pr \{ \vert X \vert > cb \} \E
         \left[ X I \left( X \in \left[ b\delta_+,
         \frac{c^2b^2}{2}\right] \right)\right]\right) 
     \end{align*}
     due to integration by parts. Now, one can apply Lemma
     \ref{LEM-BSQ-SIMP-TERM} to evaluate the first integration, and
     Karamata's theorem for the second integration, together with the
     observation that $\Pr\{ \vert X \vert > x\} = O(\bar{B}(x)),$ to
     obtain 
      \begin{align}
        \int_{b\delta_+}^\infty\Pr \left\{ \max_{0 \leq n \leq 2t} 
        \left \vert S_n \right \vert > cb\right\} \Pr\{ X \in dt \}
        = O\left( \bar{B}\left( b^2 \right) +
        \bar{B}(b)\times b\bar{B}(b)\right)
         \label{INTER-GG2-I1-II}
      \end{align}
      On similar lines of reasoning using Lemma
      \ref{LEM-MAX-ABS-SUM-ASYMP}, again via careful integration by
      parts and subsequent application of Lemma
      \ref{LEM-BSQ-SIMP-TERM} and Karamata's theorem, one can derive
     \begin{align*}
       &\int_{b\delta_+}^\infty \int_{cb}^\infty \Pr\left\{ \max_{0
         \leq n \leq 2t} \left \vert 
         S_n  \right \vert > u \right\} du \Pr\{ X \in dt\}\\
       &\quad\quad=
         O\left( \int_{b \delta_+}^\infty \int_{cb}^\infty \Pr \left\{ \max_{0 \leq s \leq
         1}\sigma  \left\vert B(s) 
         \right \vert > \frac{u}{\sqrt{2t}}\right\} du \ \Pr\{ X
         \in dt\} + \int_{b\delta_+}^\infty\int_{\sqrt{2t}}^\infty t
         \Pr\left\{ \left \vert X \right \vert > u\right\} du \ \Pr\{X \in
         dt\} \right)\\
       &\quad\quad=
         O\left( \int_{b\delta_+}^\infty \frac{\Pr\{X > t\}}{\sqrt{t}}
         \exp \left( -\frac{c^2b^2}{4\sigma^2 t}\right)dt +
         \int_{\sqrt{2b\delta_+}}^{\infty}\int_{b\delta_+}^{\frac{u^2}{2}}
         t\Pr\{ X \in dt\} \Pr \{ \vert X \vert > u\}\right) \\
         &\quad\quad = O\left( b\bar{B} \left( b^2 \right) +
           b\bar{B}(b) \times b\bar{B}(b)\right). 
     \end{align*}
     This bound, along with \eqref{INTER-GG2-I1},
     \eqref{INTER-GG2-I1-II} and the observation that
     $\bar{B}(b\delta_+) =
     \Theta(\bar{B}(b)),$ prove Lemma \ref{LEM-B32-I}.
\end{proof}

\begin{proof}[Proof of Lemma \ref{LEM-B32-II}] 
  Since $T_1 + \ldots + T_{N_A(t)} \leq t$ (follows from the
  definition of $N_A(t)$) and $V_0 := 0,$
  \begin{align*}
    \E &\left[ I\left( N_A(t) + 1 >
         2t\right) \max_{0 \leq n \leq N_A(t) + 1} \left\vert S_n
         \right\vert\right] \leq \E \left[ I\left( N_A(t)  > 
         2t -1 \right) \sum_{n=1}^{N_A(t) + 1} \left(  V_{n-1} + T_{n}
         \right)  \right]\\
       &\quad\quad\quad\quad \quad\quad\quad\quad \leq \E \left[
         I\left( N_A(t)  >
         2t - 1 \right) \left(\sum_{n=1}^{N_A(t)}  V_{n} +
         t + T_{N_A(t) + 1} \right)\right]\\
       &\quad\quad\quad\quad \quad\quad\quad\quad \leq \E V \times \E
         \left[ N_A(t) I 
         \left( N_A(t) > 
         2t - 1\right)\right] + \left( t + \E T \right) \Pr \left\{ N_A(t) >
         2t - 1\right\}\\
       &\quad\quad\quad\quad \quad\quad\quad\quad \leq C_1 t \Pr
         \left\{ N_A(t) >  2t-1 \right\} + C_2 \int_{2t-1}^\infty
         \Pr\left\{ N_A(t) > s\right\} ds
  \end{align*}
  for suitable positive constants $C_1$ and $C_2$ independent of $t.$
  Here, note that the penultimate inequality is simply due to the
  independence between $V_n$ and $T_n$ for $n \geq 1.$ Therefore,
  \begin{align}
    I_2(b) &\leq \int_{b\delta_+}^\infty  \E \left[ I\left( N_A(t) + 1 >
             2t\right) \max_{0 \leq n \leq N_A(t) + 1} \left\vert S_n
             \right\vert\right] \frac{\Pr \left\{ X \in dt
             \right\}}{\Pr\left\{ X > b \delta_+ \right\}} \nonumber\\
           &\leq C_1\int_{b\delta_+}^\infty t \Pr\left\{ N_A(t) > 2t-1 
             \right\} \frac{\Pr \left\{ X \in dt
             \right\}}{\Pr\left\{ X > b \delta_+ \right\}}  + C_2
             \int_{b\delta_+}^\infty\int_{2t-1}^\infty \Pr \left\{
             N_A(t) > s \right\} ds \frac{\Pr \left\{ X \in dt 
             \right\}}{\Pr\left\{ X > b \delta_+ \right\}} \nonumber\\  
           &\leq \Pr \left\{ N_A \left(b\delta_+ \right) > 2b \delta_+-1
             \right\}  \left( C_1\int_{b\delta_+}^\infty t \frac{\Pr \left\{ X \in dt
             \right\}}{\Pr\left\{ X > b \delta_+ \right\}} 
             + C_2 \int_{2b\delta_+-1}^\infty \frac{\Pr \left\{ X > \frac{s}{2}
             \right\}}{\Pr\left\{ X > b \delta_+ \right\}} ds
             \right) \label{I2-INTER}, 
  \end{align}
  where we have used a simple change of order of integration to arrive
  at the above conclusion. Since $N_A(x)/x \rightarrow 1$ as
  $x \rightarrow \infty,$ the event $\{ N_A(b\delta_+) > 2b\delta_+-1\}$
  is a large deviations event with probability exponentially decaying
  in $b,$ whereas the sum appearing in the parenthesis in
  \eqref{I2-INTER} is $O(b)$ due to Karamata's theorem. This proves
  the claim that $I_2(b) = O(\exp(-\nu b))$ for a suitable constant
  $\nu > 0.$
\end{proof}
\noindent As mentioned earlier, Lemmas \ref{LEM-B32-I} and
\ref{LEM-B32-II}, together complete the proof of Lemma
\ref{Lem_Run_Max_Appers}. 
\hfill{$\Box$} 

\begin{proof}[Proof of Lemma \ref{LEM-BSQ-SIMP-TERM}]
  Due to Potter's bounds \eqref{POTT_BND}, given $\ve > 0,$ we have
  \begin{align*}
    \int_{b}^\infty \frac{v(t)}{v \left( b^2 \right)} \exp\left(-\frac{cb^2}{t}
    \right) dt = O \left(\int_b^\infty \left(\frac{b^2}{t}\right)^{\alpha -
    \ve}  \exp\left(-\frac{cb^2}{t} \right) dt \right)
  \end{align*}
  for all suitably large values of $b.$ Changing variables
  $u = b^2/t,$ we obtain
  \begin{align*}
    \int_{b}^\infty \frac{v(t)}{v \left( b^2 \right)} \exp\left(-\frac{cb^2}{t}
    \right) dt = O \left( b^2 \int_0^\infty u^{\alpha-2-\epsilon}
    \exp(-cu) du \right) = O\left( b^2 \right) 
  \end{align*}
  for all $\epsilon$ small enough such that $\alpha-2 - \epsilon > 0$,
  and this verifies the claim.
\end{proof}

\begin{proof}[Proof of Lemma \ref{LEM-INF-VAR-TERM2}]
Letting $\bar{c} = (\delta-\delta_-)/2,$ observe that
\begin{align}
  &\E \left[ I \left(\max_{0 \leq n \leq 2X} \left \vert S_n \right \vert >
    \bar{c} b \right) \max_{0 \leq n \leq 2X} \left \vert S_n \right
    \vert \ \left| \frac{}{} \right. X > b\delta_+ \right] \nonumber\\
  &\quad\quad= \bar{c}b \Pr \left\{ \max_{0 \leq n \leq 2X} \left
    \vert S_n \right \vert   > \bar{c} b \ \left \vert \frac{}{}
    \right. X > b\delta_+ \right\} + \int_{\bar{c}b}^\infty  \Pr \left\{ \max_{0
    \leq n \leq 2X} \left \vert S_n \right \vert   > u \ \left \vert \frac{}{}
    \right. X > b\delta_+ \right\} du \nonumber\\
  &\quad\quad \leq  3\bar{c}b \Pr\left\{ Z_* >
    \frac{\bar{c}b}{\left(2cX\right)^{\frac{1}{\alpha}}} \ \left \vert \frac{}{}
    \right. X > b\delta_+\right\} +
    3\int_{\bar{c}b}^\infty  \Pr\left\{ Z_* >
    \frac{u}{\left(2cX\right)^{\frac{1}{\alpha}}} \ \left \vert \frac{}{}
    \right. X > b\delta_+ \right\} du
    \label{INF-VAR-LAST-INTER}
\end{align}
because of Lemma \ref{LEM-MAX-SUM-ASYMP-INF-VAR}. Since
$\Pr\{ X > x\} \sim cx^{-\alpha}$ as $x \rightarrow \infty,$ after
simple integration using Karamata's theorem \eqref{KARAMATA-II}, one 
can show that
\begin{align*}
  \bar{c}b \Pr\left\{ Z_* >
  \frac{\bar{c}b}{\left(2cX\right)^{\frac{1}{\alpha}}} \ \left \vert
  \frac{}{} \right. X > b\delta_+\right\} &= \bar{c}b \E \left[
                                            \frac{\Pr\left\{ X > \frac{1}{2c} \left( \frac{\bar{c} b}{Z_*}
                                            \right)^{\alpha}\right\}}{\Pr
                                            \left\{X >
                                            b\delta_+\right\}} \wedge 
                                            1\right] = O\left( b^2\bar{B} (b) \right), \text{ and }\\
  \int_{\bar{c}b}^\infty  \Pr\left\{ Z_* >
  \frac{u}{\left(2cX\right)^{\frac{1}{\alpha}}} \ \left \vert \frac{}{}
  \right. X > b\delta_+ \right\} du &= \int_{\bar{c} b}^\infty \left(
                                      \frac{\Pr\left\{ X >
                                      \frac{u^\alpha}{2c Z_*^\alpha} 
                                      \right\}}{\Pr\left\{ X >
                                      b\delta_+\right\}} \wedge
                                      1\right) du = O \left(
                                      b^2\bar{B}(b) \right). 
\end{align*}
 Therefore,  due to \eqref{INF-VAR-LAST-INTER}, we obtain 
\begin{align*}
  \E \left[ I \left(\max_{0 \leq n \leq 2X} \left \vert S_n \right \vert >
  \bar{c} b , \ N_A(X) + 1 \leq 2X \right) \max_{0 \leq n \leq 2X} \left \vert S_n \right
  \vert \ \left| \frac{}{} \right. X > b\delta_+ \right] = O\left( b^2
  \bar{B}(b)\right).
\end{align*}
On the other hand, the component corresponding to the large deviations
event $\{ N_A(X) + 1 \geq 2X \}$ is handled similar to Lemma
\ref{LEM-B32-II}, and this upper bounding procedure results in 
\begin{align*}
  \E \left[ I \left(\max_{0 \leq n \leq 2X} \left \vert S_n \right \vert >
  \bar{c} b , \ N_A(X) + 1 > 2X \right) \max_{0 \leq n \leq 2X} \left \vert S_n \right
  \vert \ \left| \frac{}{} \right. X > b\delta_+ \right] = O\left(
  \exp(-\nu b) \right),
\end{align*}
for some $\nu > 0.$ The last two upper bounds are enough to conclude
the statement of Lemma \ref{LEM-INF-VAR-TERM2}. 
\end{proof}

\bibliographystyle{acm}
\bibliography{asymp}

\end{document}